\documentclass[a4paper,12pt]{article}

\usepackage{mathpazo}
\usepackage{url}
\usepackage{amsmath,amssymb, amsthm}
\usepackage{stmaryrd}
\usepackage{graphicx}
\usepackage{textcomp}
\usepackage{enumitem}
\usepackage{xspace}
\usepackage{bigints}

\usepackage[utf8]{inputenc}
\usepackage{geometry}
\usepackage[affil-it]{authblk}

\newcommand{\normL}[2]{\|#1\|_{L^2_r(#2)}}

\newcommand{\normsob}[2]{\|#1\|_{H^k(#2)}}
\newcommand{\Xnorm}[1]{\| #1\|_{X^k}}

\newtheorem{theorem}{Theorem}[section]
\newtheorem{definition}[theorem]{Definition}
\newtheorem{lemma}[theorem]{Lemma}

\newtheorem{proposition}[theorem]{Proposition}
\newtheorem{remark}[theorem]{Remark}

\newtheorem{notation}[theorem]{Notation}
\title{ \bf Floating structures in shallow water: local well-posedness in the axisymmetric case}
\author{\small EDOARDO BOCCHI\footnote{Institut de Mathématiques de Bordeaux UMR 5251, Université de Bordeaux,  351 Cours de la Libération, 33405 Talence, France -  \texttt{edoardo.bocchi@math.u-bordeaux.fr}}}
\date{}
\begin{document}
\maketitle 
\begin{abstract}
The floating structure problem describes the interaction between surface water waves and a floating body, generally a boat or a wave energy converter. 
As shown by Lannes in \cite{Lan}, the equations for the fluid motion can be reduced to a set of two evolution equations on the surface elevation and the horizontal discharge. The presence of the object is accounted for by a constraint on the discharge under the object; the pressure exerted by the fluid on this object is then the Lagrange multiplier associated with this constraint. Our goal in this paper is to prove the well-posedness of this fluid-structure interaction problem in the shallow water approximation under the assumption that the flow is axisymmetric without swirl. We write the fluid equations as a quasilinear hyperbolic mixed initial boundary value problem and the solid equation as a second order ODE coupled to the fluid equations. Finally we prove the local in time well-posedness for this coupled problem, provided some compatibility conditions on the initial data are satisfied.
\end{abstract}

\section{Introduction}
The floating structure problem was first formulated by John in his two famous papers \cite{John,John2}, in which he considered a linear flow. It is a particular example of fluid-structure interaction, when a partially immersed body is floating on the fluid free surface. The fluid is supposed to be incompressible and the flow irrotational, and we can consider both the case of a prescribed solid motion or of a free motion governed by Newton's law for the solid object. In this problem we have to treat two free boundary problems.\\ The first free boundary problem is the typical water waves problem, which consists in describing the evolution of the fluid surface in contact with the air (or another fluid whose density can be neglected). In the absence of floating bodies, this is the standard water waves problem, which has been studied by many authors in the last years and whose local well-posedness theory is well-known. For instance we refer to Wu \cite{Wu2D,Wu3D}, Lannes \cite{LanWel,LanWWP} , Alazard, Burq and Zuily \cite{AlBuZu2011,AlBuZu2014} and Iguchi \cite{Iguchi}. A notable formulation is the one introduced by Zakharov, Craig and Sulem
\cite{Zakharov, CraigSulem}: they consider the potential velocity and they remove the dependence on the vertical variable $z$ working with new unknowns, the free surface elevation and the trace of the potential on the free surface.\\ 
The second free boundary problem is given by the fact that the portion of the body in contact with the fluid depends on time, so that the contact line between is a free boundary problem. For this difficulty John studied a more simplified problem. He considered a linear model in order to describe the evolution of the free surface waves and he used the potential velocity formulation. Then, he assumed that the motion of the solid is of small amplitude and he neglected the variations of the contact line in time. These assumptions permitted him to avoid the free boundary problem associated with the contact line. Moreover he studied a 
one-dimensional problem (where $d$ is the horizontal dimension). Another way to avoid this free boundary problem is to consider a structure with vertical side-walls and to assume its motion to be only vertical.\\
Even if John's approach is simplified because it does not take into account nonlinear effects, the linear approach has been used extensively in hydrodynamic engineering. In particular we refer to Cummins who, dealing with ship motion, proposed in \cite{cummins1962} his celebrated delay differential equation on the six modes of response: surge, sway, heave, roll, pitch and yaw.\\
In order to take into account nonlinear effects to better describe the real motion of floating bodies, Lannes proposed a different approach in his recent paper \cite{Lan}. He modelled the problem not with the velocity potential theory but using a new formulation. In order to remove the dependence on the vertical variable $z$, he considered the horizontal discharge, \text{i.e.} the horizontal component of the velocity field integrated vertically between the free surface and the fluid domain bottom. He showed that the equations of the problem have a \textquotedblleft compressible-incompressible\textquotedblright  structure, in which the interior pressure exerted by the fluid on the body is a Lagrange multiplier that one can determine via the resolution of a $d$-dimensional elliptic equation. He also implemented the same approach on asymptotic models, such as the nonlinear shallow water equations and Boussinesq equations.\\\\
In this paper we address the two-dimensional floating body problem, where a cylindrically symmetric structure with vertical side-walls is floating only vertically on an incompressible fluid with irrotational motion. These assumptions on the shape of the solid and its motion permit us to avoid the free boundary problem associated with the contact line and to simplify the problem. Indeed in this case the projection of the portion of the body in contact with the fluid does not depend on time. We suppose that the flow is axisymmetric and without swirl, \text{i.e.} we consider a rotation-invariant velocity field with no azimuthal component. Moreover we consider the shallow water regime, which means that the wavelength of the waves is larger than the depth. Consequently, we work with the nonlinear shallow water equations for the flow model, instead of the much more involved free surface Euler equations. The one-dimensional case with any assumption on the solid motion and shape has been studied by Iguchi and Lannes in \cite{IguLan}, where they proposed a general approach to study the well-posedness of initial boundary value problems with a free boundary.\\
The aim of this paper is to prove the local in time well-posedness of this coupled fluid-structure problem in Sobolev spaces. We need enough regular initial data, satisfying some compatibility conditions, in order to get the solution. We consider here a two-dimensional problem, but the axisymmetry keeps the boundary condition maximally dissipative. We use this condition to get better trace estimates which in general we do not have in a two-dimensional case. With all these assumptions we can reduce the problem to a one-dimensional radial problem, and we adapt the classical theory with a reformulation for weighted spaces. The important point is that, instead of John's model, in this paper we take into account nonlinear terms. On the other hand our model still has some limitations.
For instance, one should allow the solid to move also in the horizontal direction and to rotate. In these cases, one cannot bypass the study of the evolution of the contact line; moreover the flow would cease to be axisymmetric. The axisymmetric situation considered here is however relevant and can be used to validate the shallow water approach to the floating body problem: indeed, several experimental data with an axisymmetric geometry are available.  

\subsection{Outline of the paper}
In Section 2 we write the free surface Euler equations with the constraint that the solid must be in contact with the fluid during all the motion, avoiding air holes between them. Then, we write the nonlinear shallow water approximation for this floating structure problem using the same formulation as Lannes \cite{Lan}, with the introduction of the horizontal discharge. Once we have assumed the axisymmetry and the absence of swirl, introducing cylindrical coordinates we reformulate the problem to get a one-dimensional set of equations.\\In Section 3 we focus on the \textquotedblleft fluid part\textquotedblright  of the problem. We write the floating structure problem in the exterior domain $(R,+\infty)$ as a quasilinear hyperbolic initial boundary value problem, namely 
\begin{equation}
\begin{cases}
\partial_t h_e +\partial_r q_e +\dfrac{1}{r}q_e=0\\[10pt]
\partial_t q_e + \partial_r \left(\dfrac{q_e^2}{h_e}\right)+\dfrac{q_e^2}{rh_e}+gh_e\partial_r h_e =0\\
\end{cases} \mbox{in} \ \ \ (R,+\infty)
\label{systemIntro}
\end{equation} 
coupled with the boundary condition
\begin{equation}
	{q_e}_{|_{r=R}}=q_i{_{|_{r=R}}}.
	\label{boundaryintro}
\end{equation}
where $h_e=\zeta_e +h_0$ and $\zeta_e$ are the fluid height and the free surface elevation in the exterior domain respectively (the flat bottom is parametrized by $-h_0$). The exterior horizontal discharge $q_e$ is defined as
$$q_e(t,r)=\int_{-h_0}^{\zeta_e}u_r(t,r,z)dz \ \ \  \ \ \ \ \mbox{in} \ \ \ \ \ \ (R,+\infty),$$where $u_r$ is the radial component of the fluid velocity field $\mathbf{U}$.  In the boundary condition \eqref{boundaryintro} $q_i$ is the interior horizontal discharge, defined as before but for in the interior domain $(0,R).$\\
As usual we first show the well-posedness of the associated linear problem in $L^2(rdr)$. In order to have more regular solutions, the data of the problem must satisfy some compatibility conditions. Then, we apply a standard iterative scheme argument to get the existence and uniqueness of the linear solution.\\
 We treat the solid motion in Section 4. We write the vertical component of Newton's law for the conservation of linear momentum, which describes the motion of the structure, as a nonlinear second order ODE on the displacement of the vertical position of the solid center of mass from its equilibrium position, denoted by $\delta_G$. We show that this ODE can be written under the form 
 \begin{equation}
 (m+m_a(\delta_G)) \ddot{\delta}_G(t)= -\mathfrak{c} \delta_G(t) + \mathfrak{c} \zeta_e (t,R) + \left(\frac{\mathfrak{b}}{h_e^2(t,R)}+\beta(\delta_G)\right)\dot{\delta}_G^2(t).
 \label{eqmotionIntro}
 \end{equation} 
 In this equation the terms $\zeta_e(t,R)$ and $h_e(t,R)$ are responsible for the coupling with the fluid equations \eqref{systemIntro}.
  An important point is the presence of the added mass term $m_a(\delta_G)$ which can be explained by the fact that, in order to move, the solid has to accelerate itself and also the portion of fluid around it. We can find the added mass effect also in other fluid-structure interaction problems. For instance, in the case of a totally submerged solid we refer to Glass, Sueur and Takahashi \cite{SueurTaka} and Glass, Munnier and Sueur \cite{VortexP}. Moreover, this effect has an important role for the stability of numerical simulations \cite{NumAddesMass}.\\
  The value of the elevation of the fluid surface at the boundary $\zeta_e(t,R)$ in \eqref{eqmotionIntro} is the coupling term with the fluid motion. On the other hand we show that the boundary condition in \eqref{systemIntro} can be written as
  $${q_e}_{|_{r=R}}=q_i{_{|_{r=R}}}=-\frac{R}{2}\dot{\delta}_G,$$ showing the retro-action of the solid on the fluid motion.\\
Finally in Section 5 we write the coupled system modelling the problem and we show the local in time existence and uniqueness introducing an iterative scheme on the coupled fluid-structure system, looking for the solution via a fixed point argument, and we get the following result (see Theorem \ref{maintheorem} for the rigorous statement):
\begin{theorem}
	The coupled system \eqref{systemIntro} - \eqref{eqmotionIntro} is local in time well-posed, provided the initial data are regular enough and verify some compatibility conditions. 
\end{theorem}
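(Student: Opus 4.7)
The plan is to set up a Picard-type iteration on the coupled fluid-solid system and close it via a fixed-point argument in a suitable product space, treating the fluid part through the linear IBVP theory developed in Section 3 and the ODE part through a Cauchy-Lipschitz analysis.

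First I would decouple the problem by introducing the iteration map. Given an approximation $\delta_G^n$ of the solid displacement, I set the boundary data $q_e|_{r=R}=-\tfrac{R}{2}\dot{\delta}_G^n$ and solve the quasilinear hyperbolic IBVP \eqref{systemIntro} for $(h_e^{n+1},q_e^{n+1})$ using the linear well-posedness result in the weighted space $L^2_r$ established earlier, followed by the standard freezing-coefficients iteration to pass from the linear to the quasilinear level. This yields solutions in a high-order Sobolev space $X^k$ adapted to the radial geometry on $(R,+\infty)$, provided compatibility conditions of order up to $k-1$ at $t=0$ and $r=R$ are satisfied and the non-cavitation condition $h_e\geq h_{\min}>0$ holds. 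Then, using the newly computed boundary traces $\zeta_e^{n+1}(t,R)$ and $h_e^{n+1}(t,R)$ as forcing, I solve the second order ODE \eqref{eqmotionIntro} for $\delta_G^{n+1}$, with initial data $(\delta_G(0),\dot{\delta}_G(0))$ chosen compatibly with $q_i|_{r=R}$ at $t=0$. The ODE is locally well-posed as long as $m+m_a(\delta_G^{n+1})$ remains bounded away from zero, which is guaranteed for small $|\delta_G^{n+1}|$.

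Next I would establish uniform bounds on the iterates on a short time interval $[0,T]$, with $T$ depending only on the norms of the initial data. For the fluid part this relies on energy estimates in the weighted spaces $X^k$; the axisymmetric boundary condition, being maximally dissipative, yields the trace estimates on $\zeta_e(t,R)$ and $h_e(t,R)$ needed to control the source terms of the ODE. For the ODE part, standard Grönwall-type inequalities on $(\delta_G^{n+1},\dot{\delta}_G^{n+1})$ combined with the fluid trace bounds keep the solid displacement in a region where the added mass coefficient $m+m_a$ stays positive and the nonlinearities are Lipschitz. Stability (contraction) is then obtained in a \emph{lower} norm: taking differences of consecutive iterates, one controls $(h_e^{n+1}-h_e^n,q_e^{n+1}-q_e^n)$ in, say, $L^2_r$ by the differences of boundary data, hence by $\dot{\delta}_G^n-\dot{\delta}_G^{n-1}$, and conversely the difference of ODE solutions is controlled by the boundary traces of the fluid differences. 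Shrinking $T$ if necessary turns the composed map into a contraction, giving a fixed point which is the desired solution; uniqueness follows from the same stability estimate.

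The main obstacle will be the treatment of the boundary coupling. Two interrelated issues must be handled with care. First, one has to propagate the compatibility conditions at $(t=0,r=R)$ through the iteration: each step requires compatibility up to order $k-1$ so that the linear IBVP produces solutions in $X^k$, and the time derivatives of $\delta_G$ at $t=0$ (up to order $k$) must be prescribed consistently with the fluid data by differentiating \eqref{eqmotionIntro} and the boundary condition. Second, and more delicate, one needs trace estimates of $\zeta_e$ and $h_e$ at $r=R$ that are strong enough to close the ODE in the chosen norm; in a genuinely $2$-D setting these traces would lose half a derivative, but the axisymmetry reduces the problem to a $1$-D radial one where the maximally dissipative structure of the boundary condition restores the needed trace regularity. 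Once these two points are settled, the contraction argument proceeds in a standard way and delivers the local-in-time existence and uniqueness stated in Theorem \ref{maintheorem}.
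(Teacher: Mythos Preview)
Your plan is correct and follows the same overall architecture as the paper: Picard iteration on the coupled system, uniform bounds in a high norm $X^k\times H^{k+1}$, contraction in a low norm $X^0\times H^1$, with the maximal dissipativity of the boundary condition supplying the trace control needed to feed the ODE. You also correctly flag the two genuine difficulties (propagation of compatibility conditions through the iteration, and trace regularity at $r=R$).

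The one structural difference worth noting is how the iteration is organized. You propose, at each step, to solve the \emph{full quasilinear} fluid IBVP with prescribed boundary datum $-\tfrac{R}{2}\dot\delta_G^{n}$ (invoking the prescribed-motion result as a black box) and then the \emph{full nonlinear} ODE. The paper instead runs a \emph{single} linearized iteration for both components simultaneously: at step $n$ one solves the \emph{linear} hyperbolic problem $\partial_t u^n + A(u^{n-1})\partial_r u^n + B(u^{n-1},r)u^n=0$ with boundary datum $-\tfrac{R}{2}\dot\delta_G^{\,n-1}$, and a \emph{linear} second-order ODE for $\delta_G^n$ whose coefficients are frozen at $(u^{n-1},\delta_G^{\,n-1})$ and whose forcing is the freshly computed trace $\mathbf e_1\cdot u^n_{|_{r=R}}$. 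Your route is conceptually cleaner (it reuses the prescribed-motion theorem directly), but it hides a nested iteration and forces you to check that the existence time of the inner quasilinear solve is uniform along the outer sequence. The paper's route avoids that bookkeeping at the cost of writing down the linearized scheme explicitly; it also makes the $H^{k+1}$ estimate on $\delta_G^n$ a linear affair.

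One technical point you gloss over with ``standard Gr\"onwall'': to get smallness in $T$ for the ODE contribution (and hence close the ball/contraction), the paper needs a product estimate in $H^k((0,T))$ with an explicit $\sqrt{T}$ prefactor plus initial-trace terms (their Lemma~\ref{prodest}), because the naive algebra constant for $H^k((0,T))$ blows up as $T\to 0$. Whichever iteration you choose, you will need the same refinement.
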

 In Appendix A we show the details in the case of a non-flat solid bottom, considering that the contact between the solid and the fluid is still on the vertical side-walls, and we derive the corresponding solid motion ODE. In Appendix B we show the proof of a product estimate for functions in $H^k((0,T))$.

 \subsection*{Acknowledgements}
 The author addresses his sincere gratitude to D. Lannes for the supervision of this paper and his precious advices. The author wants to acknowledge C. Prange for his useful remarks and T. Iguchi for some interesting conversations. He also wants to thank M. Tucsnak, who noticed that, in the configuration we consider in this paper, a correction pressure term must be introduced to have exact energy conservation.\\\\ The author was supported by the French National Research Agency project NABUCO, grant ANR-17-CE40-0025, the Fondation Simone et Cino Del Duca and the Conseil Régional Nouvelle Aquitaine.
\section{Floating structure equations }
\subsection{Constrained free surface Euler equations}
Let us consider a floating body, typically a wave energy converter, with vertical side-walls and a cylindrical symmetry, forced to move only in the vertical direction. We call $C(t)$ the region occupied by the solid at time $t$, $\partial C(t)$ the boundary and $\partial_wC(t)$ the portion of the boundary in contact with the fluid, called the \textit{wetted surface}. The presence of the solid naturally allows to divide the horizontal plane $\mathbb{R}^2$ into two regions, the projection $\mathcal{I}$, of the wetted surface on it, and $\mathcal{E}:=\mathbb{R}^2\setminus\overline{\mathcal{I}}$. We call them \textit{interior} and \textit{exterior domain} respectively. The boundary $\Gamma:=\partial\mathcal{I}=\partial\mathcal{E}$ is called the projection of the \textit{contact line}, where the solid, the fluid and the exterior air interact. For simplicity we call $\Gamma$ itself  the contact line. These domains do not depend on time since the solid is moving only vertically and is assumed to have vertical side-walls. We consider a wetted surface that can be parametrized as graph of some function $\zeta_w(t,X)$ for $X\in \mathcal{I}$ and, like in the water waves theory, we assume that the surface of the fluid is the graph of a function $\zeta(t,X)$ for $X\in\mathbb{R}^2$, as shown in Figure \ref{image1}.\\\\
We assume that the fluid is incompressible, irrotational, with constant density $\rho$ and inviscid. For simplicity we consider a flat bottom which can be parametrized by $-h_0$ with $h_0>0$ and the fluid domain is $$\Omega(t)=\{(X,z)\in \mathbb{R}^{2+1} | -h_0 <z<\zeta(t,X) \}.$$Then, the motion of the fluid is given by the incompressible Euler equation
\begin{align}
\partial_t \mathbf{U}+ \mathbf{U}\cdot\nabla_{X,z}\mathbf{U}&=-\dfrac{1}{\rho}\nabla_{X,z}P -g\mathbf{e_z} \ \ \mbox{ in } \ \Omega(t) \label{euler}\\ \mbox{div }\mathbf{U}&=0\\\mbox{curl }\mathbf{U}&=0.
\end{align}
 The boundary conditions for the Euler equation on the velocity field $\mathbf{U}$ in the fluid domain are the traditional kinematic equation at the surface and the impermeability condition at the bottom, respectively
 \begin{align} \label{kinematic}\centering 
 z=\zeta, \ \ \ \	\partial_t \zeta -\mathbf{U}\cdot N&=0  \ \ \ \mbox{with  } N=\binom{-\nabla \zeta}{1}
\\ z=-h_0, \ \ \  \ \ \ \ \ \ \mathbf{U}\cdot \mathbf{e}_z&=0
 \end{align} 

	\begin{figure}
		\centering
		\includegraphics[scale=1]{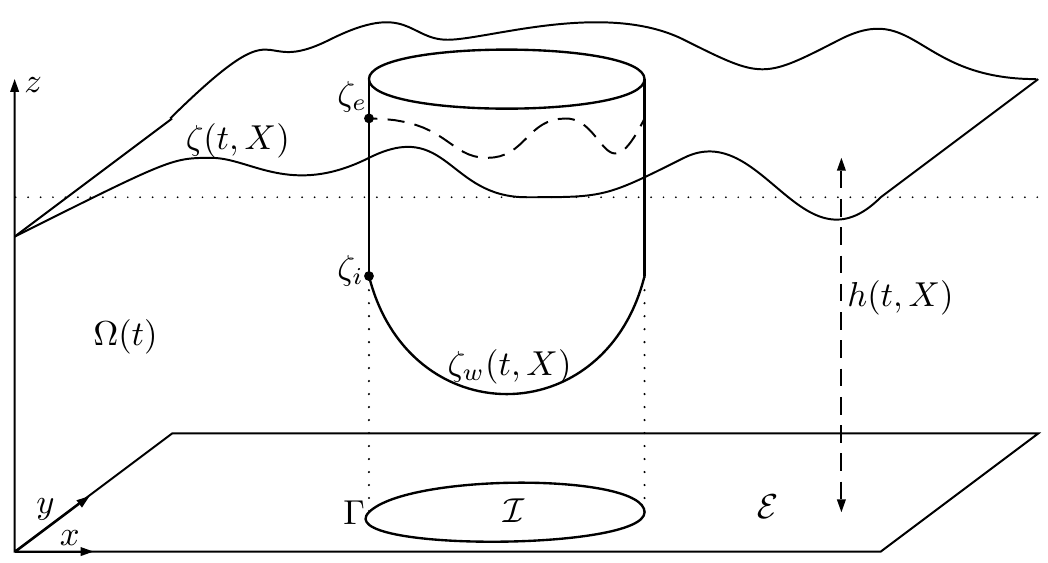}
		\caption{A cylindrically symmetric floating structure with vertical side-walls}
		\label{image1}
	\end{figure}
	
We consider a configuration when the fluid is completely attached to the solid. Hence we have the following contact constraint:
$$\zeta(t,X)=\zeta_w(t,X) \  \ \mbox{in}\ \  \mathcal{I}.$$ 
Let us denote the restrictions to the interior domain and the exterior domain of a function $f$ defined on $\mathbb{R}^2$ as $$f_i:=f_{|_{\mathcal{I}}} \ \ \ \ \ \ \ \ \ \ f_e:=f_{|_{\mathcal{E}}}.$$

According to this notation the contact constraint becomes the following \begin{equation}
	\zeta_i=\zeta_w \ \ \ \mbox{in} \ \ \mathcal{I}
	\label{interiorconstraint}
\end{equation}
\begin{remark}
	Since the solid has vertical side-walls, the free surface is not continuous across $\Gamma$, \textit{i.e.} $$\zeta_e\neq\zeta_i \ \ \ \ \mbox{on} \ \ \Gamma.$$
	\label{rem1}
\end{remark}
In the presence of a floating structure we have to change the standard condition on the value of the pressure on the free surface. In the exterior domain it is given by the constant atmospheric pressure $P_{\mathrm{atm}}$, $i.e.$ \begin{equation}
	\underline{P}_e=P_{\mathrm{atm}}.
	\label{pressurecondition}
\end{equation} with $\underline{P}=P_{|_{z=\zeta}}$. In the interior domain the pressure on the free surface is an unknown of the problem, depending on the dynamics of the solid 
but we know its value on $\Gamma$. Indeed, by integrating the vertical component of Euler's equation \eqref{euler} between $z=\zeta_i$ and $z=\zeta_e$, we have
\begin{equation}
\underline{P}_i(t,\cdot)= P_{\mathrm{atm}} +\rho g (\zeta_e-\zeta_i)+ \rho\int_{\zeta_i}^{\zeta_e}(\partial_t w + \mathbf{U}\cdot\nabla_{X,z}w)  \ \ \ \mbox{on} \ \ \ \Gamma,
\label{discontinuityelevation}
\end{equation}where $w$ is the vertical component of the velocity field $\mathbf{U}$. The second and the third term do not vanish due to the discontinuity of the free surface on $\Gamma$ (see Remark \ref{rem1}). \\
Moreover one has the continuity of the normal velocity at the vertical side-walls, $i.e.$
\begin{equation}
	V\cdot \nu = V_C\cdot \nu
	\label{normalvelocity}
\end{equation} where $\nu$ is the unit normal vector to $\Gamma$ pointing towards $\mathcal{E}$, $V$ and $V_C$ are the horizontal velocities of the fluid and the solid respectively.\\As in the standard water waves theory we suppose that the height of the fluid $h_e(t)$ does not vanish during all the motion. Hence we have the following assumption:
\begin{equation}
\exists \  h_{m}>0 : \ \ h_e(t,X)\geq h_{m} \ \ \ \forall t\in [0,T), \forall X\in\mathcal{E}.
\label{he>hm}
\end{equation}From the physics of the floating structure problem we suppose also that the solid does not touch the bottom of the domain during its motion. This is equivalent to assuming that the height of the fluid $h_i(t,X)$ under the solid does not vanish, \text{i.e.}
\begin{equation}
\exists \  h_{min}>0 : \ \ h_w(t,X)\geq h_{min} \ \ \ \forall t\in [0,T), \forall X\in\mathcal{I}.
\label{hw>hmin}
\end{equation}with $h_w(t,X)=h_i(t,X)$ in $\mathcal{I}$ due to \eqref{interiorconstraint}. This assumption is completely relevant for the situation investigated here; we refer to \cite{dePoyferre, Ming} (Euler equation) and \cite{LanShoreline} (nonlinear shallow water and Green-Naghdi equations) for the analysis of the vanishing depth problem.
\subsection{Averaged  free surface Euler equations with a floating structure}
Because of the difficulty to deal with a moving domain, we want to obtain a set of evolution equations on $\mathbb{R}^2$, without the dependence on the vertical variable $z$. This idea has been used also for the water waves problem without a floating structure by Zakharov-Craig-Sulem (see \cite{Zakharov}, \cite{CraigSulem} and \cite{LanWWP}), who introduce the trace of the velocity potential on the free surface. 
In the presence of a floating structure we use another formulation, the one that Lannes implemented in his paper \cite{Lan}. We will see that this formulation permits us to write directly the boundary condition \eqref{normalvelocity}. We define the horizontal discharge  $$Q(t,X):=\int_{-h_0}^{\zeta(t,X)}V(t,X,z)dz=h(t,X)\overline{V}(t,X)$$ where $h(t,X)=h_0+\zeta(t,X)$ is the fluid height and $\overline{V}$ is the vertical average of $V$. Then, we can reformulate the problem:

\begin{proposition}
	Using the $(\zeta,Q)$-formulation, the water waves problem with a floating structure is modelled by the following system 
	\begin{equation}
	\begin{cases}
	\partial_t \zeta + \nabla\cdot Q=0,\\[10pt]
	\partial_t Q + \nabla\cdot\left(\dfrac{1}{h}Q\otimes Q\right) + gh\nabla\zeta + \nabla\cdot \mathbf{R}+ h \mathbf{a}_{NH}(h,Q) 
	= -\dfrac{h}{\rho}\nabla\underline{P},
	\end{cases}
	\label{WWFloat}
	\end{equation}with the \textquotedblleft Reynolds\textquotedblright  tensor $\mathbf{R}$ and the non-hydrostatic acceleration $\mathbf{a}_{NH}$ as 
	\begin{equation*}
	\mathbf{R}(h,Q)=\int_{-h_0}^{\zeta} \left(V-\overline{V}\right)\otimes\left(V-\overline{V}\right),
	\end{equation*}
	\begin{equation*}
	\mathbf{a}_{NH}(h,Q)=\frac{1}{h}\int_{-h_0}^{\zeta}\nabla \left[\int_{z}^{\zeta}\left(\partial_t \mathbf{U} + \mathbf{U}\cdot \nabla_{X,z}\mathbf{U}\right)\cdot \mathbf{e}_z \right].
	\end{equation*}
	The surface pressure $\underline{P}$ is given by 
	\begin{equation}
	\underline{P}_e=P_{\mathrm{atm}}\ \ \ \mbox{and}\ \ \ \begin{cases}
	-\nabla\cdot \left(\dfrac{h}{\rho}\nabla \underline{P}_i\right)= -\partial_t^2\zeta_w + \mathbf{a}_{FS}(h, Q) \  \ \ \ \ \mbox{in} \ \ \mathcal{I}\\[10pt]
	{\underline{P}_i}_{|_{\Gamma(t)}}= P_{\mathrm{atm}}+ \rho g (\zeta_e-\zeta_i) + P_{NH} \ \ \ \mbox{on} \ \ \ \Gamma,
	\end{cases}
	\label{pres}
	\end{equation}where $$\mathbf{a}_{FS}(h, Q)=\nabla \cdot \left( \nabla\cdot\left(\dfrac{1}{h}Q\otimes Q\right)+ gh\nabla\zeta+\nabla\cdot \mathbf{R}+ h \mathbf{a}_{NH}(h,Q) \right),$$
	\begin{equation*}
		P_{NH}=\rho \int_{\zeta_i}^{\zeta_e}\left(\partial_t \mathbf{U} + \mathbf{U}\cdot \nabla_{X,z}\mathbf{U}\right)\cdot \mathbf{e}_z 
	\end{equation*} and the transition condition at the contact line is \begin{equation}
	Q_e\cdot \nu=Q_i\cdot \nu \ \ \ \mbox{on} \ \ \ \Gamma.
	\label{transition}
	\end{equation}
\end{proposition}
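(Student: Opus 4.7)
The plan is to derive the $(\zeta,Q)$-formulation by vertically averaging the free surface Euler equations, following the same strategy as Lannes \cite{Lan}, and to handle the discontinuity across $\Gamma$ separately via the contact constraint.

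For the mass equation I would integrate the incompressibility condition $\mbox{div}_{X,z}\mathbf{U}=0$ in $z$ from $-h_0$ to $\zeta$. Leibniz's rule together with the kinematic condition \eqref{kinematic} at the surface and the impermeability condition at the bottom collapse this into $\partial_t\zeta+\nabla\cdot Q=0$; this derivation is blind to the floating object and holds on both $\mathcal{I}$ and $\mathcal{E}$.

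For the momentum equation I would vertically integrate the horizontal Euler equation. Using $\mbox{div}_{X,z}\mathbf{U}=0$ and the same surface/bottom conditions, the convective term produces $\partial_tQ+\nabla\cdot(hV\otimes V)$, and the Reynolds decomposition $V=\overline V+(V-\overline V)$ converts this into $\partial_tQ+\nabla\cdot(Q\otimes Q/h)+\nabla\cdot\mathbf{R}$. For the pressure contribution, the key step is to decompose $P$ as $P(t,X,z)=\underline{P}(t,X)+\rho g(\zeta-z)+P_{NH}(t,X,z)$, where $P_{NH}|_{z=\zeta}=0$ and $P_{NH}$ is reconstructed by integrating the vertical Euler equation downward from the surface, yielding $P_{NH}(t,X,z)=\rho\int_z^{\zeta}(\partial_tw+\mathbf{U}\cdot\nabla_{X,z}w)\,dz'$. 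Vertical integration of $\nabla_XP$ and commutation with the integral then produce exactly the hydrostatic term $\rho gh\nabla\zeta$, the surface contribution $h\nabla\underline{P}$, and the non-hydrostatic acceleration $\rho h\mathbf{a}_{NH}$.

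For the surface pressure, the exterior formula $\underline{P}_e=P_{\mathrm{atm}}$ is the standard atmospheric free surface condition \eqref{pressurecondition}. In $\mathcal{I}$ the contact constraint \eqref{interiorconstraint} gives $\zeta_i=\zeta_w$; differentiating the interior mass equation in $t$ and substituting the interior momentum equation to replace $\partial_tQ_i$ produces the advertised elliptic equation for $\underline{P}_i$, with right-hand side $-\partial_t^2\zeta_w+\mathbf{a}_{FS}(h,Q)$. The Dirichlet datum on $\Gamma$ is precisely \eqref{discontinuityelevation}, obtained by integrating the vertical Euler equation from $z=\zeta_i$ up to $z=\zeta_e$ across the jump of the free surface. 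Finally, the transition condition \eqref{transition} follows from \eqref{normalvelocity}: since the solid moves only vertically we have $V_C\cdot\nu=0$, so $V\cdot\nu=0$ on the wetted portion of the side wall, i.e.\ for $\zeta_w<z<\zeta_e$. Hence the integral defining $Q_e\cdot\nu$ over $[-h_0,\zeta_e]$ loses its upper part and reduces to the integral over $[-h_0,\zeta_w]$, which matches $Q_i\cdot\nu$ by continuity of $\mathbf{U}$ across $\Gamma$ for $z<\zeta_w$. The main accounting obstacle in the whole derivation is the pressure splitting: one must keep careful track of each boundary term so that $P_{NH}$ truly vanishes at $z=\zeta$ and its trace on $\Gamma$ reproduces the advertised Dirichlet datum, so that the system closes exactly as in \eqref{pres}.
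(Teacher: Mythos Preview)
Your proposal is correct and follows essentially the same route as the paper's (sketch of) proof: vertical integration of the horizontal Euler equation for \eqref{WWFloat}, the atmospheric condition \eqref{pressurecondition} for $\underline{P}_e$, deriving the elliptic equation for $\underline{P}_i$ by combining the time-differentiated mass equation with the momentum equation (equivalently, taking the divergence of the momentum equation), the boundary datum from \eqref{discontinuityelevation}, and the transition condition from \eqref{normalvelocity}. Your write-up is simply more explicit than the paper's sketch, which defers details to \cite{Lan}.
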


\begin{remark}
	Note that the expressions for $\mathbf{R}$ and $\mathbf{a}_{NH}$ in the statement above involve the velocity field $\mathbf{U}=\binom{V}{w}$. It is shown in \cite{Lan} that the velocity field is fully determined by the knowledge of $\zeta$ and $Q$, hence the notations $\mathbf{R}(h,Q)$ and $\mathbf{a}_{NH}(h,Q).$ 
\end{remark}

\begin{proof}[Sketch of the proof]
	In order to get \eqref{WWFloat} we just need to integrate over the vertical coordinate the horizontal component of the Euler equation \eqref{euler}. The condition on the surface in the exterior domain is \eqref{pressurecondition}. One can note that, in the interior domain, $\underline{P}_i$ is the Lagrange multiplier associated with the constraint \eqref{interiorconstraint}.Then, using the standard argument for incompressible Euler, the elliptic system in \eqref{pres} is obtained by applying the horizontal divergence to \eqref{WWFloat} in the interior domain. The boundary condition for the elliptic system comes from \eqref{discontinuityelevation}. Finally for \eqref{transition} we use the definition of $Q_e$ and $Q_i$ and \eqref{normalvelocity} (see Proposition 10 in \cite{Lan}).
\end{proof}

\begin{remark}\label{energy}
	The fluid energy is
	\begin{equation*}
		E_{\mbox{fluid}}= \frac{\rho}{2}g\int_{\mathbb{R}^2}\zeta^2 +\frac{\rho}{2}\int_{\Omega(t)}|\mathbf{U}|^2.
	\end{equation*}
\end{remark}
\subsection{The shallow water regime}
We are interested here in a simplified model of these equations in the shallow water regime, when the horizontal scale of the problem is much larger than the depth. In the water waves problem the horizontal scale is given by the typical wavelength of the waves.
\begin{remark}
	In the floating structure problem a third relevant length in the floating structures problem is the solid width. In particular the ratio of the solid width and the wavelength naturally appears in the adimensionalized equations. Here we consider this quantity as a parameter independent of the other parameters, such as the shallowness and the nonlinear parameters, and it takes no role in the derivation of the asymptotic model. 
\end{remark}	
Following \cite{Lan}, the same approximations as in the case without floating object are made, namely
\begin{equation*}
\mathbf{R}\approx 0 \ \  \ \ \ \ \mbox{and} \ \ \ \ \  	\mathbf{a}_{NH}\approx 0. 
\end{equation*}
Then, we get the following equations:
\begin{proposition}
The nonlinear shallow water equations with a floating structure in the $(\zeta,Q)$-formulation are 
	\begin{equation}
	\begin{cases}
	\partial_t \zeta + \nabla\cdot Q=0,\\[10pt]
	\partial_t Q + \nabla\cdot\left(\dfrac{1}{h}Q\otimes Q\right) + gh\nabla\zeta 
	= -\dfrac{h}{\rho}\nabla\underline{P}.
	\end{cases}
	\label{SWFloat}
	\end{equation} with the surface pressure $\underline{P}$ given by 
	\begin{equation}
	\underline{P}_e=P_{\mathrm{atm}}\ \ \ \mbox{and}\ \ \ \begin{cases}
	-\nabla\cdot \left(\dfrac{h}{\rho}\nabla \underline{P}_i\right)= -\partial_t^2\zeta_w + \mathbf{a}_{FS}(h, Q) \ \ \ \mbox{in} \ \ \mathcal{I}\\[10pt]
	{\underline{P}_i}_{|_{\Gamma}}= P_{\mathrm{atm}}+ \rho g (\zeta_e-\zeta_i)_{|_{\Gamma}} + P_{\mathrm{cor}} ,
	\end{cases}
	\label{SWpres}
	\end{equation}where $$\mathbf{a}_{FS}(h, Q)=\nabla \cdot \left( \nabla\cdot\left(\dfrac{1}{h}Q\otimes Q\right)+ gh\nabla\zeta\right),$$
	coupled with the transition condition at the contact line \begin{equation}
	Q_e\cdot \nu=Q_i\cdot \nu \ \ \ \mbox{on} \ \ \ \Gamma.
	\label{SWtransition}
	\end{equation}
\end{proposition}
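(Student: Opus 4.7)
The plan is to apply the shallow water closure approximations $\mathbf{R} \approx 0$ and $\mathbf{a}_{NH} \approx 0$ to the averaged floating-structure system \eqref{WWFloat}--\eqref{transition} from the previous proposition, and to track carefully how these approximations propagate through the elliptic problem that determines the interior surface pressure. The evolution equations \eqref{SWFloat} follow directly by dropping $\nabla \cdot \mathbf{R}$ and $h \mathbf{a}_{NH}$ from \eqref{WWFloat}, so the substantive part of the proof concerns the pressure system \eqref{SWpres} and the transition condition \eqref{SWtransition}.

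For the pressure system I would repeat the argument used in the sketch of the preceding proposition. In the exterior domain, $\underline{P}_e = P_{\mathrm{atm}}$ is unaffected by the closure. In the interior domain, I take the horizontal divergence of the simplified momentum equation in \eqref{SWFloat} on $\mathcal{I}$, substitute the continuity equation $\partial_t \zeta + \nabla \cdot Q = 0$ to rewrite $\nabla \cdot \partial_t Q_i$ as $-\partial_t^2 \zeta_i$, and use the contact constraint \eqref{interiorconstraint} to replace $\zeta_i$ by $\zeta_w$; this yields the elliptic equation in \eqref{SWpres} with the simplified source $\mathbf{a}_{FS}(h,Q) = \nabla \cdot (\nabla \cdot (\tfrac{1}{h} Q \otimes Q) + g h \nabla \zeta)$, which is just the $\mathbf{a}_{FS}$ of the previous proposition after removing the $\mathbf{R}$ and $\mathbf{a}_{NH}$ contributions. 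The transition condition \eqref{SWtransition} is inherited unchanged from \eqref{transition}, since it encodes the kinematic condition \eqref{normalvelocity} at the vertical side-walls and does not involve either of the truncated tensors.

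The genuinely delicate step is the boundary value of $\underline{P}_i$ on $\Gamma$. A naive application of the shallow water closure would set $P_{NH} = 0$ in \eqref{discontinuityelevation}, but this is incompatible with exact conservation of the fluid energy in Remark \ref{energy}. To determine the correct correction $P_{\mathrm{cor}}$ I would compute $\frac{d}{dt} E_{\mathrm{fluid}}$ by testing the momentum equation of \eqref{SWFloat} against $Q$, integrating by parts on $\mathcal{I}$ and $\mathcal{E}$ separately, and combining the bulk terms via the continuity equation together with \eqref{SWtransition}. The remaining boundary integrals on $\Gamma$ must cancel, and this cancellation requirement uniquely fixes the expression of $P_{\mathrm{cor}}$ as a quadratic expression in the boundary traces of $Q/h$ and of the jump $\zeta_e - \zeta_i$.

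The main obstacle is precisely the identification of $P_{\mathrm{cor}}$: the asymptotic closures are imposed at the level of the bulk equations but introduce a discrepancy at the contact line which cannot be recovered by direct substitution from the non-approximated model. Its expression is fixed by the physical requirement of exact energy conservation rather than by the asymptotic expansion itself, consistent with the observation credited to Tucsnak in the acknowledgements.
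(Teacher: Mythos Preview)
Your derivation of the bulk equations, the elliptic problem for $\underline{P}_i$, and the transition condition is correct and is exactly what the paper does: the proposition is stated without a formal proof, the text simply says that one applies the approximations $\mathbf{R}\approx 0$, $\mathbf{a}_{NH}\approx 0$ to the previous proposition (equations \eqref{WWFloat}--\eqref{transition}) and obtains \eqref{SWFloat}--\eqref{SWtransition}. So for the core of the statement your approach coincides with the paper's.

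There is, however, a genuine issue in your treatment of $P_{\mathrm{cor}}$. First, the proposition as stated does \emph{not} determine $P_{\mathrm{cor}}$: the paper explicitly writes, just after the statement, that ``this corrector is determined later in Proposition~\ref{conservationenergy} below to ensure exact energy conservation''. So identifying $P_{\mathrm{cor}}$ is not part of the proof of this proposition; it is introduced here as an as-yet-undetermined boundary term. Second, your proposed criterion for fixing $P_{\mathrm{cor}}$ is wrong: you want to enforce conservation of the \emph{fluid} energy $E_{\mathrm{fluid}}$ of Remark~\ref{energy}, but in a fluid--structure interaction the fluid energy alone is not conserved --- the paper states this explicitly (``In the presence of a floating structure the fluid energy $E_{SW}$ is no more conserved''). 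The correct quantity is the \emph{total} energy $E_{tot}=E_{SW}+E_{sol}$, and the computation that fixes $P_{\mathrm{cor}}$ (carried out in Proposition~\ref{conservationenergy}, in the axisymmetric setting) must bring in Newton's law for the solid to convert the boundary work term $\int_\Gamma (\underline{P}_i-P_{\mathrm{atm}})\, Q_i\cdot\nu$ into $-\tfrac{d}{dt}E_{sol}$ plus a residual; only that residual is cancelled by the choice of $P_{\mathrm{cor}}$. Your energy balance, restricted to the fluid, would leave an uncompensated term equal to the power exchanged with the solid and would not close.
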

Differently from the case considered by Lannes in \cite{Lan}, where the jump of pressure $\underline{P}_{i_{|_{\Gamma(t)}}}-P_{\mathrm{atm}}$ at the boundary of the object is assumed to be only due to the hydrostatic pressure, \textit{i.e.}$$\underline{P}_{i_{|_{\Gamma(t)}}}-P_{\mathrm{atm}}=\rho g\left(\zeta_e-\zeta_i\right)_{|_{\Gamma(t)}},$$ we add here a non-hydrostatic correction term $P_{\mathrm{cor}}$. This corrector is determined later in Proposition \ref{conservationenergy} below to ensure exact energy conservation (the mathematical analysis if we remove this term can be performed in the same way).
~\\
As for the kinematic condition \eqref{kinematic}, we have that 
\begin{equation}
	\partial_t \zeta_w -\underline{U}_w \cdot N_w  \ \ \ \mbox{in} \ \ \mathcal{I} \  \ \ \mbox{with} \ \ \ \ N_w=\binom{-\nabla \zeta_w}{1}
	\label{kinematicsolid}
\end{equation}where $\underline{U}_w$ is the velocity of the solid on the wetted surface. Let us denote the center of mass of the solid $G(t)=(X_G(t), z_G(t))$ and $\mathbf{U}_G(t)=(V_G(t),w_G(t))$ its velocity and $\omega$ the angular velocity of the solid. From the solid mechanics we have
\begin{equation*}
	\underline{U}_w= \mathbf{U}_G + \omega \times \mathbf{r}_G \ \  \ \ \ \mbox{with} \ \ \ \ \ \mathbf{r}_G(t,X)=\binom{X-X_G(t)}{\zeta_w(t,X)-z_G(t)}.
\end{equation*} Then, \eqref{kinematicsolid} gives
\begin{equation}
	\partial_t\zeta_w =\left( \mathbf{U}_G + \omega \times \mathbf{r}_G\right)\cdot N_w \ \ \ \  \mbox{in} \ \ \ \mathcal{I}.
	\label{timederzetaw}
\end{equation}
Because of the linearity of the elliptic problem we can decompose the interior pressure as $\underline{P}_i=\underline{P}_i^\mathrm{I}+\underline{P}_i^\mathrm{II}+\underline{P}_i^\mathrm{III}$ where:
	 \begin{itemize}
	 	\item $\underline{P}_i^\mathrm{I}$ is the pressure we would have in the case of a fixed solid, solution to 
	\begin{equation} \begin{cases}
	-\nabla\cdot \left(\dfrac{h}{\rho}\nabla \underline{P}_i^\mathrm{I}\right)= \mathbf{a}_{FS}(h, Q) \ \ \ \mbox{in} \ \ \mathcal{I},\\[10pt]
	{\underline{P}_i^{\mathrm{I}}}_{|_{\Gamma}}= P_{\mathrm{atm}},
	\end{cases}
	\label{Pr1}\end{equation}
	where $\mathbf{a}_{FS}(h, Q)$ is the free surface acceleration in the absence of a floating structure;\\
		\item $\underline{P}_i^\mathrm{II}$ is the part of the pressure due to the acceleration of the solid 
\begin{equation}
	\begin{cases}
	-\nabla\cdot \left(\dfrac{h}{\rho}\nabla \underline{P}_i^\mathrm{II}\right)= -\partial_t^2 \zeta_w,\ \ \ \mbox{in} \ \ \mathcal{I}, \\[10pt]
	{\underline{P}_i^{\mathrm{II}}}_{|_{\Gamma}}= 0,
	\end{cases}
	\label{Pr2}
	\end{equation}
	where $w_G$ is the vertical component of the velocity of the center of mass $G(t)$ of the solid;\\
	\item $\underline{P}_i^\mathrm{III}$ is the part of the pressure due to the pressure discontinuity at the contact line
\begin{equation}
	\begin{cases}
	-\nabla\cdot \left(\dfrac{h}{\rho}\nabla \underline{P}_i^\mathrm{III}\right)= 0 \ \ \ \mbox{in} \ \ \mathcal{I},\\[10pt]
	{\underline{P}_i^{\mathrm{III}}}_{|_{\Gamma}}= \rho g (\zeta_e - \zeta_i)_{|_{\Gamma}} + P_{\mathrm{cor}}.
	\end{cases}
	\label{Pr3}
	\end{equation}
	\end{itemize}
	 
\subsection{Axisymmetric without swirl setting}
\begin{figure}
	\centering
	\includegraphics[scale=1]{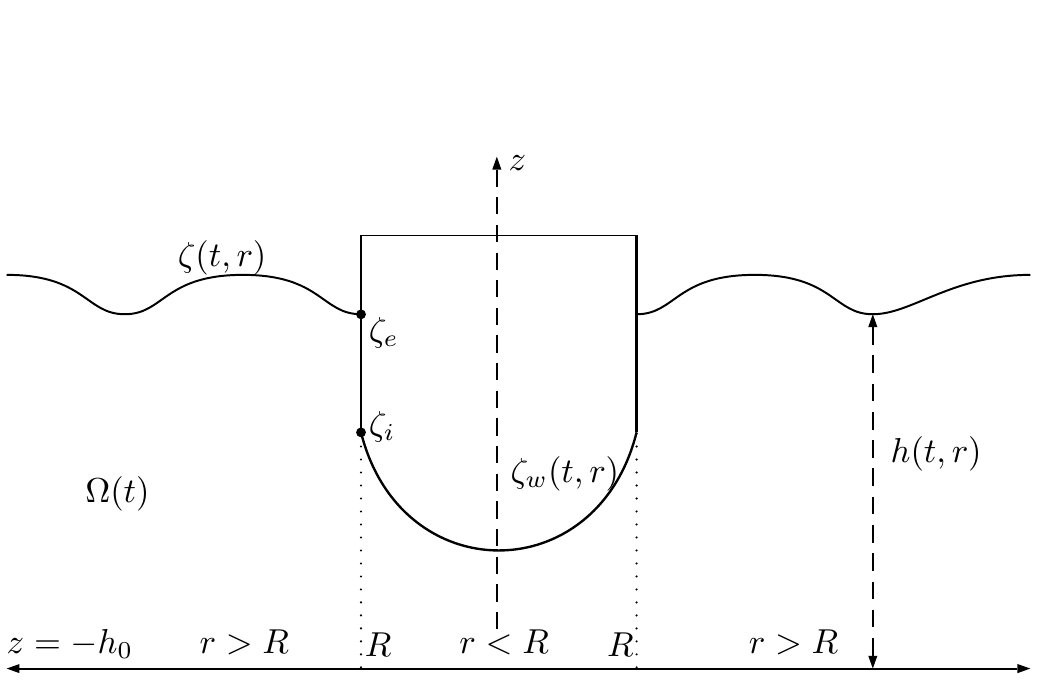}
	\caption{Vertical cross section of the axisymmetric configuration}
	\label{image2}
\end{figure}

Without loss of generality we suppose the center of mass to have coordinates $G(t)=(0,0, z_G(t))$ and let $R$ be the radius of the interior domain $\mathcal{I}$. Introducing a cylindrical coordinates system with the $z$-axis coincident with the axis of symmetry of the solid (see Figure \ref{image2}) we write the velocity field $\mathbf{U}$ as $$\mathbf{U}(t,r,\theta,z)=\left(u_r(t,r,\theta,z), u_\theta(t,r,\theta,z), u_z(t,r,\theta,z)\right).$$ From now on and throughout the paper we consider an axisymmetric flow without swirl, which means that the flow has no dependence on the angular variable $\theta$, \text{i.e.} $\mathbf{U}=\mathbf{U}(t,r,z)$, and $u_\theta=0$ respectively. Hence the horizontal discharge can be written as
\begin{equation*}
	Q(t,r)=\left(q_r(t,r), 0 \right)
\end{equation*}with 
\begin{equation*}
	q_r(t,r)=\int_{-h_0}^{\zeta}u_r(t,r,z)dz 
\end{equation*}and the tangential component vanishes since 
$$q_\theta(t,r)=\int_{-h_0}^{\zeta}u_\theta(t,r,z)dz=0.$$ For simplicity we write $q$ instead of $q_r$ for the radial component of the horizontal discharge. Moreover, since the solid moves only vertically and the swirl is neglected in the flow, $V_G=0$ and $\omega=0$. Hence from \eqref{timederzetaw} we have 
\begin{equation*}
\partial_t^2 \zeta_w = \dot{w}_G.
\end{equation*} In the new system of reference the shallow water model \eqref{WWFloat} - \eqref{transition} becomes 
\begin{equation}
\begin{cases}
\partial_t h +\partial_r q +\dfrac{q}{r}=0\\[10pt]
\partial_t q + \partial_r \left(\dfrac{q^2}{h}\right)+\dfrac{q^2}{rh}+gh\partial_r h =-\dfrac{h}{\rho}\partial_r\underline{P}
\end{cases}  \mbox{in} \ \ \ (0,+\infty)
\label{WWFloatpolar}
\end{equation} coupled with the transition condition
\begin{equation}
q_{e_{|_{r=R}}} =q_{i_{|_{r=R}}}.
\label{neumannq}
\end{equation}
We have $\underline{P}_e=P_{\mathrm{atm}}$ and \eqref{Pr1} - \eqref{Pr3} become
	\begin{gather} \begin{cases}
	-\left(\partial_r + \dfrac{1}{r}\right)\left(\dfrac{h_w}{\rho}\partial_r \underline{P}_i^\mathrm{I}\right)= \\[10pt] \left(\partial_r + \dfrac{1}{r}\right)\left(\partial_r \left(\dfrac{q_i^2}{h_w}\right)+\dfrac{q_i^2}{rh_w}+gh_w\partial_r h_w\right) \ \ \ \mbox{in} \ \ (0,R)\\[10pt]
	{\underline{P}_i^{\mathrm{I}}}_{|_{r=R}}= P_{\mathrm{atm}},
	\label{P1}
	\end{cases}\end{gather}
	\begin{gather}
	\begin{cases}
	-\left(\partial_r + \dfrac{1}{r}\right)\left(\dfrac{h_w}{\rho}\partial_r \underline{P}_i^\mathrm{II}\right)= -\dot{w_G} \  \ \ \ \ \mbox{in} \ \ (0,R)\\[10pt]
	{\underline{P}_i^{\mathrm{II}}}_{|_{r=R}}= 0,
	\end{cases}
	\label{P2}
	\end{gather}
	\begin{gather}
	\begin{cases}
	- \left(\partial_r + \dfrac{1}{r}\right)\left(\dfrac{h_w}{\rho}\partial_r \underline{P}_i^\mathrm{III}\right)= 0 \ \ \ \ \ \ \ \mbox{in} \ \ (0,R)\\[10pt]
	{\underline{P}_i^{\mathrm{III}}}_{|_{r=R}}= \rho g (\zeta_e - \zeta_i)_{|_{r=R}} + P_{\mathrm{cor}},
	\end{cases}
	\label{P4}
	\end{gather}
	where we replace $h_i=\zeta_i + h_0$ with $h_w=\zeta_w+h_0$ due to the contact constraint \eqref{interiorconstraint}. Using axisymmetry and absence of swirl together with this change of coordinates we pass from a two-dimensional to a one-dimensional problem, where explicit calculations can be done (see Section 4). With these assumptions, the horizontal discharge is no more a vectorial quantity but a scalar quantity, making the problem easier to handle. 

\begin{remark}
	Under the shallow water approximation and in the axisymmetric without swirl setting the fluid energy in Remark \ref{energy} becomes
	\begin{equation}
	E_{SW}= 2\pi\frac{\rho}{2}g\int_{0}^{+\infty}\zeta^2rdr +2\pi\frac{\rho}{2}\int_{0}^{+\infty}\frac{q^2}{h}rdr.
	\end{equation} 		
\end{remark} 
In the presence of a floating structure  the fluid energy $E_{SW}$ is no more conserved by the equations in \eqref{SWFloat}. Let us define the energy for a solid moving only vertically as
\begin{equation}
E_{sol}=\frac{1}{2}mw_G^2+mgz_G.
\end{equation}and the total fluid-structure energy
\begin{equation}
	E_{tot}:= E_{SW}+ E_{sol}.
\end{equation}
\begin{notation}
	$$\left\llbracket f\right\rrbracket:= f_{e_{|_{r=R}}} -f_{i_{|_{r=R}}}$$ is the jump between the exterior and the interior domain at the contact line $r=R$.
\end{notation}
 We can now state the following proposition:
\begin{proposition}\label{conservationenergy}
Choosing $P_{\mathrm{cor}}=\dfrac{\rho}{2}q^2_{i_{|_{r=R}}}\left\llbracket \dfrac{1}{h^2}\right \rrbracket,$ the total fluid-structure energy is conserved, \textit{i.e.}
\begin{equation}
	\frac{d}{dt}E_{tot}= 0.
	\label{totalconser}
\end{equation}
\end{proposition}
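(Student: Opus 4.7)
The plan is to derive a local energy balance for the radial shallow water system, split the spatial integration into the interior and exterior subdomains, add the solid energy balance, and then show that with the correct choice of $P_{\mathrm{cor}}$ all boundary terms cancel.

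First I would establish the local energy law for \eqref{WWFloatpolar}. Multiplying the momentum equation by $rq/h$, using the continuity equation in the form $r\partial_t h = -\partial_r(rq)$, and applying the identities $r\partial_r(q^2/h)+q^2/h = \partial_r(rq^2/h)$ and $rq\partial_r h + \zeta\partial_r(rq) = \partial_r(\zeta rq)$ (since $\partial_r\zeta=\partial_r h$), one obtains
\begin{equation*}
\partial_t\!\left(r\Big[\tfrac{q^2}{2h}+\tfrac{g}{2}\zeta^2\Big]\right)
+\partial_r\!\left(rq\Big[\tfrac{q^2}{2h^2}+g\zeta\Big]\right)
=-\tfrac{rq}{\rho}\partial_r\underline P.
\end{equation*}
Integrating over $(R,+\infty)$ (with decay at infinity and $\underline P_e=P_{\mathrm{atm}}$) and over $(0,R)$ (using $rq\to 0$ as $r\to 0$), then summing and using the transition condition $q_e(R)=q_i(R)=:q_R$, gives
\begin{equation*}
\tfrac{1}{2\pi\rho}\tfrac{d}{dt}E_{SW}=Rq_R\!\left[\tfrac{q_R^2}{2}\!\left\llbracket\tfrac{1}{h^2}\right\rrbracket+g\llbracket\zeta\rrbracket\right]-\tfrac{1}{\rho}\int_0^R rq_i\partial_r\underline P_i\,dr.
\end{equation*}

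Next I would treat the interior pressure term. An integration by parts yields $\int_0^R rq_i\partial_r\underline P_i\,dr=Rq_R\,\underline P_i(R)-\int_0^R\partial_r(rq_i)\,\underline P_i\,dr$, and continuity combined with $\partial_t h_w=w_G$ (constant in $r$ for a flat solid bottom) gives $\partial_r(rq_i)=-r\partial_t h_i=-rw_G$. Substituting the boundary value ${\underline P_i}_{|r=R}=P_{\mathrm{atm}}+\rho g\llbracket\zeta\rrbracket+P_{\mathrm{cor}}$ from \eqref{SWpres}, the contribution of the interior pressure becomes an explicit expression in $q_R$, $w_G$, $P_{\mathrm{cor}}$ and $\int_0^R r\underline P_i\,dr$.

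I then differentiate $E_{sol}=\tfrac{1}{2}mw_G^2+mgz_G$ and substitute Newton's vertical equation $m\dot w_G=-mg+2\pi\int_0^R r(\underline P_i-P_{\mathrm{atm}})\,dr$, obtaining $\tfrac{d}{dt}E_{sol}=2\pi w_G\int_0^R r(\underline P_i-P_{\mathrm{atm}})\,dr$. Summing with $\tfrac{d}{dt}E_{SW}$, the interior integral $\int_0^R r\underline P_i\,dr$ cancels. Using the kinematic identity $Rq_R=-w_GR^2/2$ derived from integrating continuity in the interior (equivalently, the boundary condition in \eqref{neumannq}), the atmospheric-pressure contributions $-2\pi Rq_RP_{\mathrm{atm}}$ and $-\pi w_GR^2P_{\mathrm{atm}}$ cancel, and the hydrostatic jump terms $2\pi\rho Rq_R g\llbracket\zeta\rrbracket$ and $-2\pi Rq_R\rho g\llbracket\zeta\rrbracket$ cancel as well. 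What remains is
\begin{equation*}
\tfrac{d}{dt}E_{tot}=\pi Rq_R\!\left[\rho q_R^2\!\left\llbracket\tfrac{1}{h^2}\right\rrbracket-2P_{\mathrm{cor}}\right],
\end{equation*}
which vanishes exactly for the stated choice of $P_{\mathrm{cor}}$.

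The main obstacle is bookkeeping: one must track the signs and weights on both boundary contributions (the integration-by-parts boundary term, the interior flux term at $r=R$, and the exterior flux term at $r=R$), and verify that the non-trivial cubic boundary contribution $\tfrac{\rho}{2}q_R^3\llbracket 1/h^2\rrbracket$ — which is absent in the hydrostatic version considered in \cite{Lan} — is precisely what $P_{\mathrm{cor}}$ is designed to absorb through the kinematic identity $q_R=-w_GR/2$. Everything else is algebraic cancellation once the local energy identity is in hand.
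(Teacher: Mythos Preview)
Your proposal is correct and follows essentially the same route as the paper: derive the local energy identity $\partial_t\mathfrak{e}+\partial_r F=-rq\partial_r\underline P$, integrate separately on $(0,R)$ and $(R,+\infty)$, integrate the interior pressure term by parts, combine with Newton's law via $\partial_r(rq_i)=-r\partial_t\zeta_w=-rw_G$, and read off the residual $2\pi R\bigl(\tfrac{\rho}{2}q_R^3\llbracket 1/h^2\rrbracket-q_R P_{\mathrm{cor}}\bigr)$. The only cosmetic difference is that the paper subtracts $P_{\mathrm{atm}}$ from $\underline P_i$ before the integration by parts, which makes the atmospheric-pressure cancellation automatic and spares the explicit use of $Rq_R=-w_GR^2/2$ that you invoke; also, your parenthetical ``constant in $r$ for a flat solid bottom'' is unnecessarily restrictive, since $\partial_t h_w=\dot\delta_G=w_G$ is independent of $r$ for any vertically moving rigid body.
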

\begin{proof}
Multiplying the first equation in \eqref{WWFloatpolar} by $\rho g \zeta r$ and the second by $\dfrac{qr}{h}$ and summing up, we use the fact that $\partial_t h =\partial_t \zeta$ to write the system under the conservative form
\begin{equation}
	\partial_t \mathfrak{e} + \partial_r F=-rq\partial_r\underline{P},
	\label{conservativeform}
\end{equation}
where $\mathfrak{e}$ is the local fluid energy $$\mathfrak{e}=\frac{\rho}{2}g\zeta^2 r + \frac{\rho}{2}\frac{q^2}{h}r$$ and $F$ is the flux
$$F=\rho\left(\frac{q^3}{2h^2}r + g\zeta q r \right).$$ The conservative form \eqref{conservativeform} reads in the interior domain $(0,R)$
\begin{equation}
	\partial_t \mathfrak{e}_i + \partial_r F_i=-rq_i\partial_r\underline{P}_i
	\label{interiorconservative}
\end{equation} and in the exterior domain $(R,+\infty)$
\begin{equation}
\partial_t \mathfrak{e}_e + \partial_r F_e=0
\label{exteriorconservative}
\end{equation}
We integrate \eqref{interiorconservative} on $(0,R)$ and \eqref{exteriorconservative} on $(R,+\infty)$ and multiplying by $2\pi$ we obtain 
\begin{equation}
	\frac{d}{dt} E_{SW} - 2\pi\rho R \left\llbracket \frac{q^3}{2h^2} + g\zeta q \right\rrbracket= -2\pi\int_{0}^{R}rq_i\partial_r\left(\underline{P}_i -P_{\mathrm{atm}}\right)dr, 
\end{equation} By integration by parts we get 
\begin{equation*}\begin{aligned}
		\frac{d}{dt} E_{SW} = & \ 2\pi\rho R \left\llbracket \frac{q^3}{2h^2} + g\zeta q \right\rrbracket -2\pi R \left(\underline{P}_i -P_{\mathrm{atm}}\right)_{|_{r=R}}q_{i_{|_{r=R}}}\\&+2\pi\int_{0}^{R}\left(\underline{P}_i -P_{\mathrm{atm}}\right)\partial_r(rq_i)dr.
		\end{aligned}
\end{equation*}
On the other hand, from the definition of $E_{sol}$, we have 
\begin{equation*}
\begin{aligned}
\frac{d}{dt}E_{sol}&= mw_G\dot{w}_G + mgw_G= w_G\left(m\dot{w}_G + mg\right)\\&=w_G \ 2\pi\int_{0}^{R}\left(\underline{P}_i -P_{\mathrm{atm}}\right)rdr\\&=2\pi\int_{0}^{R}\left(\underline{P}_i -P_{\mathrm{atm}}\right)\partial_t\zeta_wrdr
\end{aligned}
\end{equation*} where we have used Newton's law for the conservation of the linear momentum in the axisymmetric configuration
\begin{equation*}
	m\dot{w}_G = -mg + 2\pi \int_0^R \left(\underline{P}_i -P_{\mathrm{atm}}\right)rdr
\end{equation*}
and \eqref{timederzetaw}. From the contact constraint \eqref{interiorconstraint} and the  mass conservation equation in \eqref{WWFloatpolar} the following yields:
\begin{equation}
	\frac{d}{dt}E_{sol}=-2\pi\int_{0}^{R}\left(\underline{P}_i -P_{\mathrm{atm}}\right)\partial_r(rq_i )dr.
\end{equation}Therefore 
$$\frac{d}{dt}E_{sol}= -\frac{d}{dt}E_{SW} + 2\pi\rho R \left\llbracket \frac{q^3}{2h^2} + g\zeta q \right\rrbracket -2\pi R\left(\underline{P}_i -P_{\mathrm{atm}}\right)_{|_{r=R}}q_{i_{|_{r=R}}}.$$
Using the expression of the interior pressure $\underline{P}_i$ on the boundary $r=R$ in \eqref{P1} - \eqref{P4} and the transition condition \eqref{neumannq} we obtain
\begin{equation*}
	\frac{d}{dt}\left(E_{SW}+E_{sol}\right)= 2\pi R\left(\frac{\rho}{2}q^3_{i_{|_{r=R}}}\left\llbracket \frac{1}{h^2}\right \rrbracket - q_{i_{|_{r=R}}}P_{\mathrm{cor}}\right).
\end{equation*}If $q_{i_{|_{r=R}}}=0$ the result follows directly. Otherwise, we choose the pressure corrector term $P_{\mathrm{cor}}$ in \eqref{P4} as
\begin{equation}
	P_{\mathrm{cor}}= \frac{\rho}{2}q^2_{i_{|_{r=R}}}\left\llbracket \frac{1}{h^2}\right \rrbracket,
\label{Pcor}
\end{equation} and we get \eqref{totalconser}.
\end{proof}

\section{The fluid equations}
In this section we focus on the \textquotedblleft fluid part \textquotedblright of the coupled problem. We show that the exterior part of \eqref{WWFloatpolar} can be written as a one-dimensional quasilinear hyperbolic initial boundary value problem in an exterior domain and we shall prove the local in time well-posedness. Like frequently in the literature, throughout this paper we also use the term \textit{mixed} problem: this comes from the fact that we have, as data of the problem, both the initial (in time) and the boundary (in space) values.\\ Hyperbolic problems in exterior domains have been treated in many works. M\'{e}tivier \cite{Met}, Benzoni and Serre \cite{Benz} have studied hyperbolic initial boundary value problems in exterior domains with constant coefficients and maximally dissipative boundary condition. Isozaki \cite{Isozaki} and Alazard \cite{Alazard} have studied the singular incompressible limit for the compressible Euler equation in an exterior domain. Concerning the quasilinear hyperbolic mixed problems, Schochet \cite{Schochet} has proved the local in time existence in the case of bounded domains and Shibata and Kikuchi \cite{Shibata} have showed the local in time existence for some second order problem in bounded and unbounded domains. Differentiability of solutions to hyperbolic mixed problems has also been studied by Rauch and Massey \cite{Rauch}.\\The case we are considering here has not been treated in the literature yet. We consider a two-dimensional problem, but the axisymmetry keeps the boundary condition maximally dissipative which in general for a two-dimensional problem is not true. This property is essential for the coupling with the solid motion as it provides us better trace estimates than the ones for the general two-dimensional shallow water equations, but in other cases it is not necessary (see \cite{Ma-Os} for elementary examples). Hence we reduce the problem to a one-dimensional radial problem, then we must adapt the classical theory.\\
Let us recall that in the exterior domain we have
\begin{align}
&\begin{cases}
\partial_t h_e +\partial_r q_e +\dfrac{q_e}{r}=0\\[10pt]
\partial_t q_e + \partial_r \left(\dfrac{q_e^2}{h_e}\right)+\dfrac{q_e^2}{rh_e}+gh_e\partial_r h_e =0.
\end{cases} \mbox{in} \ \ \ (R,+\infty)
\label{WWFloatpolarext}
\end{align}
coupled with the boundary condition \begin{equation}{q_e}_{|_{r=R}}=q_i{_{|_{r=R}}} \label{boundarycond}\end{equation}
Defining $u=(\zeta_e,q_e)^T$ and adding the Cauchy data we can write \eqref{WWFloatpolarext} - \eqref{boundarycond} as the following quasilinear hyperbolic mixed problem
\begin{equation}
\begin{cases}
\partial_t u +A(u)\partial_r u +B(u,r)u=0 \ \ \  \ \ \ \mbox{in} \ \ (R,+\infty)\\
\mathbf{e}_2 \cdot u_{|_{r=R}}=q_i{_{|_{r=R}}}\\
u(0)=u_0
\end{cases}
\label{quasilinearU}
\end{equation} with \begin{equation*}A(u)=\left (
\begin{array}{cc}
0& 1 \\
gh_e-\dfrac{q_e^2}{h_e^2}& \dfrac{2q_e}{h_e}\\
\end{array}
\right ), \ \ \ \  B(u,r)=\left (
\begin{array}{cc}
0& \dfrac{1}{r} \\[10pt]
0& \dfrac{q_e}{rh_e}\\
\end{array}
\right )\end{equation*} and $$u_0=(\zeta_{e,0},q_{e,0})^T.$$

\subsection{The linear hyperbolic mixed problem}
In order to construct the solution to the floating structure problem, which is a quasilinear mixed problem of the form \eqref{quasilinearU} coupled with Newton's equation for the solid motion, we shall use an iterative scheme based on the following linearization of \eqref{quasilinearU},
\begin{equation}
\begin{cases}
L(\overline{u})u=\partial_t u +A(\overline{u})\partial_r u +B(\overline{u},r)u=f,\\
\mathbf{e}_2 \cdot u_{|_{r=R}}=g,\\
u(0)=u_0,
\end{cases}
\label{linearU}
\end{equation}with some $\overline{u}$ and $g=q_{i_{|_{r=R}}}$. Since the coefficients of $A$ and $B$ are rational fractions, we have $A(\cdot), B(\cdot,r)\in C^\infty(\mathcal{U})$ for some open set $\mathcal{U}\in \mathbb{R}^2$ where $\overline{h_e}$ does not vanish, which represents a phase space of $\overline{u}$.\\
%
			\noindent Let us assume that we are in the subsonic regime, which means that for  $\overline{u}=(\overline{\zeta_e},\overline{q_e})$ the following holds:
			\begin{equation}
				\dfrac{\overline{q_e}^2}{\overline{h_e}^2}< g\overline{h_e}\label{subsonic}
			\end{equation}with $\overline{h_e}=h_0 +\overline{\zeta_e}$. In the case of water waves in oceans, where the water depth is much bigger than the water velocity, this assumption is satisfied. Then, for the linear initial boundary problem \eqref{linearU} we have the following result:
			\begin{proposition}\label{p3p4}
				Assume that $\overline{u}$ satisfies \eqref{subsonic}. Then, the linear exterior hyperbolic mixed problem \eqref{linearU}, which is the linearization of the floating structure equations \eqref{quasilinearU}, satisfies the following properties 
				:\\\\
			(P3) $A(\overline{u})$ has one strictly positive eigenvalue $\lambda_+(\overline{u})$ and one strictly negative eigenvalue $\lambda_-(\overline{u}),$ 
			\\\\
			(P4) $P_-(\overline{u})\mathbf{e}_2^\perp \neq (0,0)$ except in $(0,0)$,  where $P_-(\overline{u})$ is the projector on the eigenspace associated with the negative eigenvalue of $A(\overline{u})$ and $\mathbf{e}_2^\perp$ is the orthogonal complement of $\mathbf{e}_2.$
			\end{proposition}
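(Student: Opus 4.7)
The plan is to do both parts by explicit computation on the $2\times 2$ matrix $A(\overline{u})$, which is small enough that the spectrum can be read off directly.

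For (P3), I would first write down the characteristic polynomial of $A(\overline{u})$, namely
\begin{equation*}
\lambda^2 - \frac{2\overline{q}_e}{\overline{h}_e}\lambda - \Bigl(g\overline{h}_e - \frac{\overline{q}_e^2}{\overline{h}_e^2}\Bigr) = 0,
\end{equation*}
and compute its discriminant, which simplifies to $4 g \overline{h}_e > 0$ (using $\overline{h}_e \geq h_m > 0$). Thus the two real eigenvalues are
\begin{equation*}
\lambda_\pm(\overline{u}) = \frac{\overline{q}_e}{\overline{h}_e} \pm \sqrt{g\overline{h}_e}.
\end{equation*}
The subsonic assumption \eqref{subsonic} is exactly $|\overline{q}_e/\overline{h}_e| < \sqrt{g\overline{h}_e}$, from which $\lambda_+(\overline{u})>0$ and $\lambda_-(\overline{u})<0$ follow immediately.

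For (P4), I would compute the eigenvectors explicitly: since the first row of $A(\overline{u}) - \lambda I$ is $(-\lambda,1)$, the eigenvector associated with $\lambda_\pm$ can be taken as $r_\pm = (1,\lambda_\pm)^T$. Now $\mathbf{e}_2^\perp$ is (up to sign) the vector $\mathbf{e}_1 = (1,0)^T$, and its decomposition on the eigenbasis $\{r_+, r_-\}$ reads $\mathbf{e}_1 = \alpha r_+ + \beta r_-$ with
\begin{equation*}
\alpha = \frac{-\lambda_-}{\lambda_+ - \lambda_-}, \qquad \beta = \frac{\lambda_+}{\lambda_+ - \lambda_-}.
\end{equation*}
By (P3) we have $\lambda_+ > 0 > \lambda_-$, so $\lambda_+ - \lambda_- > 0$ and both coefficients are strictly positive. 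In particular,
\begin{equation*}
P_-(\overline{u})\mathbf{e}_2^\perp = \beta\, r_- = \frac{\lambda_+}{\lambda_+ - \lambda_-}\begin{pmatrix}1\\ \lambda_-\end{pmatrix} \neq (0,0),
\end{equation*}
which gives (P4).

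There is no genuine obstacle here: the whole proposition reduces to elementary $2\times 2$ linear algebra combined with the subsonic condition. The only point that requires a moment of care is tracking signs (so that the subsonic inequality indeed forces $\lambda_+ > 0 > \lambda_-$ rather than, say, both eigenvalues of the same sign) and verifying that the projector $P_-(\overline{u})$ applied to $\mathbf{e}_2^\perp$ yields a nonzero multiple of $r_-$ for every admissible $\overline{u}$, which is guaranteed by the nonvanishing of $\lambda_+$.
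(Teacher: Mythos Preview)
Your proof is correct and follows essentially the same route as the paper's: both compute the eigenvalues $\lambda_\pm=\overline{q}_e/\overline{h}_e\pm\sqrt{g\overline{h}_e}$ explicitly, invoke the subsonic condition for (P3), and then verify (P4) by an explicit computation of $P_-(\overline{u})\mathbf{e}_1$. The only cosmetic difference is that the paper uses the resolvent formula $P_-(\overline{u})=-\dfrac{A(\overline{u})-\lambda_+(\overline{u})\mathrm{Id}}{\lambda_+(\overline{u})-\lambda_-(\overline{u})}$ to evaluate the projection, whereas you decompose $\mathbf{e}_1$ directly on the eigenbasis; the resulting vector is the same, namely $\dfrac{\lambda_+}{\lambda_+-\lambda_-}(1,\lambda_-)^T$.
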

			\begin{proof}
				The eigenvalues of $A(\overline{u})$ are $\lambda_\pm(\overline{u})=\pm\sqrt{g\overline{h_e}} + \dfrac{\overline{q_e}}{\overline{h_e}} $ and the associated unit eigenvectors are  $$\mathbf{e}_\pm(\overline{u})=\frac{1}{\sqrt{1+ \lambda_\pm ^2(\overline{u})}}(1, \lambda_\pm (\overline{u})).$$ The assumption \eqref{subsonic} gives property (P3).
				We prove now property (P4). Let us denote $P_+(\overline{u})$ and $P_-(\overline{u})$ the projectors on the eigenspaces associated with $\lambda_+(\overline{u})$ and $\lambda_-(\overline{u})$ respectively. They are given explicitly by
				\begin{equation}
				P_+(\overline{u})= \dfrac{A(\overline{u}) - \lambda_-(\overline{u})\rm{Id}}{\lambda_+(\overline{u})- \lambda_-(\overline{u})} \ \ \ \ \ \ \ P_-(\overline{u})= -\dfrac{A(\overline{u}) -\lambda_+(\overline{u})\rm{Id}}{\lambda_+(\overline{u})- \lambda_-(\overline{u})} 
				\label{projectors}
				\end{equation}  Since $\mathbf{e}_2^\perp$ is of the form $a\mathbf{e}_1$ with $a\in\mathbb{R}$, from the definition of $P_-(\overline{u})$ in \eqref{projectors} we have
				\begin{equation}
				P_-(\overline{u})\mathbf{e}_2^\perp = -\frac{a}{\lambda_+(\overline{u})- \lambda_-(\overline{u})}(\lambda_+(\overline{u}),- \lambda_+(\overline{u})\lambda_-(\overline{u}))^T,  \ \ \ \ a \in \mathbb{R}
				\end{equation} which is different from zero, except for $a=0$, since $\lambda_+(\overline{u})\neq 0$.
			\end{proof}
 The following lemma is a direct consequence of Proposition \ref{p3p4}:
\begin{lemma}\label{p1p2}
Assume that $\overline{u}$ satisfies \eqref{subsonic}. The linear hyperbolic exterior mixed problem \eqref{linearU} satisfies the following properties:
\begin{enumerate}[label=(P\arabic*)]
\item The system is \textit{Friedrichs symmetrizable}, \textit{i.e.} there exists a symmetric matrix $S(\overline{u})$, called the symmetrizer, such that there exist $\alpha>0$ such that
$S(\overline{u})\geq \alpha\text{Id} $ and $S(\overline{u})A(\overline{u})$ is symmetric.
\item The boundary condition is \textit{maximally dissipative}: $S(\overline{u})A(\overline{u})$ is negative definite on the kernel of the boundary condition $\mathbf{e}_2^\perp,$ where $\mathbf{e}_2^\perp$ is the orthogonal complement of $\mathbf{e}_2.$.
\end{enumerate}
\end{lemma}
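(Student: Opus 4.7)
The plan is to lift the spectral information in Proposition \ref{p3p4} into an explicit symmetrizer. By (P3), $A(\overline{u})$ has two real distinct eigenvalues $\lambda_+(\overline{u})>0>\lambda_-(\overline{u})$, so it is diagonalizable: writing $P(\overline{u})=[\mathbf{e}_+(\overline{u}),\mathbf{e}_-(\overline{u})]$ for the matrix of unit eigenvectors from the proof of Proposition \ref{p3p4} and $D(\overline{u})=\mathrm{diag}(\lambda_+,\lambda_-)$, one has $A=PDP^{-1}$, with $P$ and $P^{-1}$ smooth in $\overline{u}$ throughout any compact region of the subsonic phase space (the two eigenvalues stay separated there).

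For (P1) I would set
\[
S(\overline{u}):=(P(\overline{u})^{-1})^T\,\Gamma\,P(\overline{u})^{-1},\qquad \Gamma=\mathrm{diag}(\gamma_+,\gamma_-),
\]
with constants $\gamma_+,\gamma_->0$ to be fixed below. This $S$ is automatically symmetric and positive definite, with a uniform lower bound $\alpha>0$ on its smallest eigenvalue on any compact subsonic region (where $\|P^{-1}(\overline{u})\|$ is bounded). The product $SA=(P^{-1})^T(\Gamma D)P^{-1}$ is symmetric because $\Gamma D$ is diagonal, so (P1) holds for any choice of $\gamma_\pm>0$.

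For (P2), decompose a generic $u=a\mathbf{e}_1\in\mathbf{e}_2^\perp$ as $u=a\,\alpha(\overline{u})\mathbf{e}_++a\,\beta(\overline{u})\mathbf{e}_-$; a direct computation yields
\[
(SAu,u)=a^2\bigl(\gamma_+\lambda_+\alpha(\overline{u})^2+\gamma_-\lambda_-\beta(\overline{u})^2\bigr).
\]
Property (P4) is precisely the statement that $P_-(\overline{u})\mathbf{e}_1\neq 0$, i.e.\ $\beta(\overline{u})\neq 0$, so the second term $\gamma_-\lambda_-\beta^2<0$ is strictly negative. It then suffices to choose the ratio $\gamma_-/\gamma_+$ large enough that this negative contribution dominates $\gamma_+\lambda_+\alpha^2$, giving $(SAu,u)<0$ for every nonzero $u\in\mathbf{e}_2^\perp$; this is (P2).

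The one delicate point is the uniformity of this choice in $\overline{u}$. In the iterative scheme of the following sections, $\overline{u}$ will be confined to a bounded subset of the subsonic phase space, and on such a subset $\lambda_\pm(\overline{u}),\alpha(\overline{u}),\beta(\overline{u})$ are all bounded with $|\lambda_-|\beta^2$ bounded below away from $0$; hence $\gamma_+,\gamma_-$ can be fixed once and for all. I do not expect any deeper obstacle: the content of the lemma is essentially that (P3) supplies enough spectral structure to build a symmetric positive definite $S$, while (P4) is exactly the non-degeneracy needed to make the boundary condition dissipative in that $S$-geometry.
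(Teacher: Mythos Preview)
Your argument is correct and is essentially the paper's own proof in different notation: the paper takes $S(\overline{u})=M\,P_-^T(\overline{u})P_-(\overline{u})+P_+^T(\overline{u})P_+(\overline{u})$ with the spectral projectors $P_\pm$, which coincides with your $(P^{-1})^T\Gamma P^{-1}$ upon writing $P^{-1}$ in terms of the dual (left) eigenvectors and setting $\gamma_+=1$, $\gamma_-=M$. The choice of the free parameter (your $\gamma_-/\gamma_+$, their $M$) to make $SA$ negative on $\mathbf{e}_2^\perp$ via (P4) is identical in both versions.
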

\begin{proof}
 From $(P3)$ we have that $\lambda_+(\overline{u})>0$ and $\lambda_-(\overline{u})<0$. We define the symmetrizer $S(\overline{u}):=M P^T_-(\overline{u})P_-(\overline{u}) + P^T_+(\overline{u})P_+(\overline{u})$ for some constant $M>0$. We compute that
	\begin{align*}
	(S(\overline{u})v,v)&=((MP_-^T(\overline{u})P_-(\overline{u})+ P_+^T(\overline{u})P_+(\overline{u}))v,v)\\&=M (P_-(\overline{u})v, P_-(\overline{u})v) + (P_+(\overline{u})v, P_+(\overline{u})v). 
	\end{align*}
	Hence , from the decomposition $v= P_+(\overline{u})v + P_-(\overline{u})$ we get
	\begin{equation}
	(S(\overline{u})v,v)\geq \alpha(v,v)
	\end{equation}
	with $\alpha=\min(M,1)/2$. The symmetry of $S(\overline{u})$ is trivial. We have the following spectral decomposition 
	\begin{equation}
	A(\overline{u})= \lambda_+(\overline{u})P_+(\overline{u}) + \lambda_-(\overline{u}) P_-(\overline{u}).
	\end{equation}
	By the definition of the projectors \eqref{projectors},  $S(\overline{u})A(\overline{u})$ reads
	\begin{equation}
		S(\overline{u})A(\overline{u})= \lambda_- M P_-^T(\overline{u})P_-(\overline{u}) + \lambda_+ P_+^T(\overline{u}) P_+(\overline{u})
			\end{equation}which is clearly symmetric and we get property (P1). We refer to Taylor (see Prop. 2.2 of \cite{Taylor}) for a different proof with a general notion of symmetrizer involving pseudo-differential operators.\\ 
	Let us consider $\mathbf{e}_2^\perp$, the one dimensional orthogonal complement of $\mathbf{e}_2\in\mathbb{R}^2$, which is the kernel of the boundary condition. Then, we compute that
	\begin{align*}
	(S(\overline{u})A(\overline{u})\mathbf{e}_2^\perp,\mathbf{e}_2^\perp)
	=& \ \lambda_-(u)M(P_-(\overline{u})\mathbf{e}_2^\perp, P_-(\overline{u})\mathbf{e}_2^\perp)\\& + \lambda_+(\overline{u})(P_+(\overline{u})\mathbf{e}_2^\perp, P_+(\overline{u})\mathbf{e}_2^\perp). 
	\end{align*}
	Due to property (P4) we obtain property (P2) choosing  $$M>-\dfrac{\lambda_+(\overline{u}) (P_+(\overline{u})\mathbf{e}_2^\perp,P_+(\overline{u})\mathbf{e}_2^\perp)}{\lambda_-(\overline{u})(P_-(\overline{u})\mathbf{e}_2^\perp,P_-(\overline{u})\mathbf{e}_2^\perp)}.$$
\end{proof}
	\begin{remark}
		Property (P4) is a reformulation of the uniform Kreiss-Lopatinski\u{i} condition. Then, we have just proved that the system \eqref{linearU} admits a Kreiss symmetrizer, which transforms the system into a symmetric one with the additional property that the boundary condition for this symmetric system is maximally dissipative. This property will permit to control the trace of the solution at t he boundary by the standard energy estimate. 
	\end{remark} Let us now introduce the following space:
\begin{equation}X^k(T):=\bigcap_{j=0}^k C^j([0,T], H_r^{k-j}((R,+\infty)))$$ endowed with the norm $$\| u\|_{X^k(T)}:=\sup_{t\in[0,T]}\Xnorm{u(t)} \ \ \ , \ \ \  \Xnorm{u(t)}=\sum_{j=0}^{k}\|\partial_t^j u(t)\|_{H_r^{k-j}((R,+\infty))}
	\label{Xk}
\end{equation} with $\| \cdot\|_{H_r^k((R,+\infty))}$ the norm of the weighted Sobolev space $H^k((R,+\infty),rdr)$. 
We first show the following a priori estimate useful to find strong solutions of the problem \eqref{linearU}.

\begin{proposition}\label{prioriestL2}
	Let $T>0$ and $\overline{u}\in X^2(T)$ be such that \eqref{subsonic} is satisfied. With $\alpha$ as in Lemma \ref{p1p2} there are constants $c_{\alpha,R}$ and a non-decreasing function $C_R(\cdot)$ on $[0,+\infty)$ such that all the solutions $u \in H^1_r((0,T)\times (R,+\infty)) $ solving \eqref{linearU} satisfy 
\begin{equation}\begin{aligned}
\normL{u(t)}{(R,+\infty)}^2& + \|u_{|_{r=R}}\|_{L^2((0,t))}^2 \leq c_{\alpha,R}e^{tC_{\alpha,R}(\overline{u})} \times \\& \times \left( \normL{u_0}{(R,+\infty)}^2+\|{g}\|_{L^2((0,t))}^2+ \int_{0}^{t}\normL{f(\tau)}{(R,+\infty)}^2d\tau\right)
\label{aprioriestimate}
\end{aligned}\end{equation}for all $t\in [0,T],$ with $C_{\alpha,R}(\overline{u})=
1+\alpha^{-1}C_R(\|\overline{u}\|_{X^2(T)})$.
\end{proposition}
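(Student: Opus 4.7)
The plan is to run the standard Friedrichs energy method for symmetric hyperbolic systems, adapted to the weighted radial setting and to a non-homogeneous boundary condition. I would take the $L^2_r((R,+\infty))$ scalar product of $L(\overline{u})u=f$ against $S(\overline{u})u$, with $S(\overline{u})$ the symmetrizer of Lemma \ref{p1p2}. The time-derivative term rewrites as $\frac{1}{2}\frac{d}{dt}(S(\overline{u})u,u)_{L^2_r}-\frac{1}{2}(\partial_t S(\overline{u})u,u)_{L^2_r}$, and the commutator contains $\partial_t\overline{u}$ which sits in $C^0_t L^\infty_r$ thanks to $\overline{u}\in X^2(T)$ and the 1D embedding $H^1_r((R,+\infty))\hookrightarrow L^\infty$ (which is available since $r\geq R>0$).

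For the spatial term, I would use the symmetry of $S(\overline{u})A(\overline{u})$ together with the weighted identity
\begin{equation*}
\partial_r\bigl(r(S(\overline{u})A(\overline{u})u,u)\bigr)=(S(\overline{u})A(\overline{u})u,u)+2r(S(\overline{u})A(\overline{u})\partial_r u,u)+r(\partial_r(S(\overline{u})A(\overline{u}))u,u),
\end{equation*}
and integrate on $(R,+\infty)$ assuming sufficient decay at infinity (this is where the $H^1_r$ regularity of $u$ is used). This produces a boundary contribution $-R(S(\overline{u})A(\overline{u})u,u)|_{r=R}$ plus two interior terms, both bounded by $C_R(\|\overline{u}\|_{X^2})\|u\|_{L^2_r}^2$, using $\int_R^{+\infty}|u|^2\,dr\leq R^{-1}\|u\|_{L^2_r}^2$ and $\partial_r\overline{u}\in L^\infty$. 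The $B(\overline{u},r)u$ term is harmless because each entry of $B$ carries a $1/r$ factor bounded by $1/R$, and $(S(\overline{u})f,u)_{L^2_r}$ is absorbed by $\|f\|_{L^2_r}^2+C\|u\|_{L^2_r}^2$ via Cauchy--Schwarz and Young.

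The main obstacle is the boundary term, because the boundary data $g$ is non-zero. Here I decompose $u(t,R)=u_1(t,R)\mathbf{e}_1+g(t)\mathbf{e}_2$, noting that $\mathbf{e}_2^{\perp}=\mathrm{span}(\mathbf{e}_1)$ is precisely the kernel of the boundary condition. Property (P2) gives $(S(\overline{u})A(\overline{u})\mathbf{e}_1,\mathbf{e}_1)\leq -c(\overline{u})<0$ uniformly in a sublevel set of $X^2$, so expanding the quadratic form and controlling the cross term $2u_1(R)g(S(\overline{u})A(\overline{u})\mathbf{e}_1,\mathbf{e}_2)$ by Young's inequality yields
\begin{equation*}
-R(S(\overline{u})A(\overline{u})u,u)|_{r=R}\geq \tfrac{Rc(\overline{u})}{2}|u(t,R)|^2-C(\overline{u})|g(t)|^2.
\end{equation*}
This is the step that genuinely uses the uniform Kreiss--Lopatinski\u{i} condition (P4) and encodes maximal dissipativity: it is what produces the trace norm on the left-hand side of \eqref{aprioriestimate}.

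Collecting everything gives the differential inequality
\begin{equation*}
\frac{d}{dt}(S(\overline{u})u,u)_{L^2_r}+\tfrac{Rc(\overline{u})}{2}|u(t,R)|^2\leq C_R(\|\overline{u}\|_{X^2})\|u\|_{L^2_r}^2+\tilde C(\overline{u})|g(t)|^2+\|f\|_{L^2_r}^2.
\end{equation*}
Using $\alpha\|u\|_{L^2_r}^2\leq (S(\overline{u})u,u)_{L^2_r}\leq\alpha^{-1}C\|u\|_{L^2_r}^2$ from Lemma \ref{p1p2} to bound the right-hand side by $\alpha^{-1}C_R(\|\overline{u}\|_{X^2})(S(\overline{u})u,u)_{L^2_r}$ and integrating by Gronwall on $[0,t]$ yields the estimate \eqref{aprioriestimate}, with the factor $\alpha^{-1}$ entering $C_{\alpha,R}(\overline{u})$ exactly through this conversion.
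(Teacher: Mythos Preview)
Your proof is correct and follows essentially the same Friedrichs energy method as the paper: symmetrize, integrate by parts in $r$ with the weight, control the interior commutator terms via $\overline{u}\in X^2(T)\hookrightarrow W^{1,\infty}$, and close by Gronwall. The only cosmetic difference is in the boundary step: the paper invokes a lemma of M\'etivier stating that maximal dissipativity on $\mathbf{e}_2^\perp$ is equivalent to $-(S(\overline{u})A(\overline{u})h,h)\geq c_1|h|^2-c_2|\mathbf{e}_2\cdot h|^2$, whereas you prove this inequality by hand via the decomposition $u|_{r=R}=u_1\mathbf{e}_1+g\mathbf{e}_2$ and Young's inequality---the two are the same computation.
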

\begin{proof}
Following Proposition 2.2 of \cite{Benz}, we have from property (P1) and by integrations by parts 
\begin{equation*}
\begin{aligned}
&\dfrac{d}{dt}(S(\overline{u})u,u)_{L^2_r((R,+\infty))}=-2(S(\overline{u})A(\overline{u})\partial_r u,u)_{L^2_r((R,+\infty))}\\&-2(S(\overline{u})B(\overline{u},r) u,u)_{L^2_r((R,+\infty))}+((\partial_t S(\overline{u})) u,u)_{L^2_r((R,+\infty))}+ 2(S(\overline{u})f,u)_{L^2_r((R,+\infty))}\\
&=S(\overline{u})A(\overline{u})u_{|_{r=R}}\cdot u_{|_{r=R}}R  + \left(W(\overline{u}) u,u\right)_{L^2_r((R,+\infty))} +\left(S(\overline{u}) u,f\right)_{L^2_r((R,+\infty))}.
\end{aligned}
\end{equation*}
with \begin{equation}W(\overline{u})=\partial_tS(\overline{u}) +\partial_r (S(\overline{u})A(\overline{u})) +\frac{1}{r}S(\overline{u})A(\overline{u})-2S(\overline{u})B(\overline{u},r)\label{W}\end{equation}
Property (P2) permits us to control the first term on the right-hand side of the inequality, using the following Lemma from Métivier \cite{Met}:
\begin{lemma}
The symmetric matrix $S(\overline{u})A(\overline{u})$ is negative definite on $\mathbf{e}_2^\perp$, the set of all vectors orthogonal to $\mathbf{e}_2$, if and only if there are constants $c_1,c_2>0$ such that for each vector $h\in \mathbb{C}^2$:
$$-(S(\overline{u})A(\overline{u})h,h)\geq c_1|h|^2 - c_2|\mathbf{e}_2\cdot h|^2.$$
\end{lemma}
Choosing $h=u_{|_{r=R}}$, integrating in time and using property (H1) we have \begin{equation}
\begin{aligned}
&\left(S(\overline{u}) u(t),u(t)\right)_{L^2((R,+\infty))} \leq \left(S(\overline{u}(0)) u_0,u_0\right)_{L^2((R,+\infty))}\\&+\int_{0}^{t}\left(S(\overline{u}) f(\tau),f(\tau)\right)_{L^2((R,+\infty))}+c_2R\left|{g(\tau)}\right|^2-c_1R\left|u_{|_{r=R}(\tau)}\right|^2d\tau\\&+(1+\alpha^{-1}\|W(\overline{u})\|_{L^\infty((0,T)\times(R, +\infty) )})\int_{0}^{t}\left(S(\overline{u})u(\tau),u(\tau)\right)_{L^2((R,+\infty))}d\tau.
\end{aligned}\label{ineqtang}\end{equation} 
where we have used the fact that $S(\overline{u})\geq \alpha\text{Id}.$ Moreover, we get the following estimate:
\begin{equation}
\begin{aligned}
	&\|W(\overline{u})\|_{L^\infty((0,T)\times(R, +\infty) )} \\& = \|S^\prime(\overline{u})\partial_t\overline{u} + S^\prime(\overline{u})\partial_r\overline{u}A(\overline{u}) +S(\overline{u})A^\prime(\overline{u})\partial_r\overline{u}\\&\ \ \ \ \ \ \ + \frac{1}{r} S(\overline{u})A(\overline{u}) - 2S(\overline{u})B(\overline{u},r)\|_{L^\infty((0,T)\times(R, +\infty) )}  \\ 
	& \leq C_R(\|\overline{u}\|_{W^{1,\infty}((0,T)\times(R, +\infty) )})\leq C_R\left(\|\overline{u}\|_{X^2(T)}\right)
\end{aligned}
\label{normW}
\end{equation}for some non-decreasing function $C_R(\cdot)$ on $[0,+\infty)$, where we have used the fact that $r\in(R,+\infty)$, the embedding $H^2_r((R,+\infty))\hookrightarrow W^{1,\infty}((R, +\infty))$ and the definition of $X^2(T).$
By Gronwall's Lemma we have
\begin{align*}
\alpha\|{u(t)}\|_{X^0}^2 +& c_1R\|u_{|_{r=R}}\|_{L^2((0,t))}^2 \leq  \ e^{t\left(1+\alpha^{-1}C_R(\|\overline{u}\|_{X^2(T)})\right)}\times \\&\times \left( c(\|\overline{u}(0)\|_{X^2})\|{u_0}\|_{X^0}^2+c_2R\|{g}\|_{L^2((0,t))}^2+ \alpha^{-1}\int_{0}^{t}\|{f(\tau)}\|^2_{X^0}d\tau\right).
\end{align*}
We get \eqref{aprioriestimate} for $$C_{\alpha,R}(\overline{u})=
1+\alpha^{-1}C_R(\|\overline{u}\|_{X^2(T)}), \ \ \ \ \ \ \ \ c_{\alpha,R}=\dfrac{\max(c(\|\overline{u}(0)\|_{X^2}),c_2 R)}{\min(\alpha,c_1R)}.$$
\end{proof}
Then, following Theorem 2.4.5 of \cite{Met}, one can show that there is a unique solution $u\in C^0\left([0,T], L^2_r((R,+\infty))\right)$ for the initial datum $u_0$ in $L^2_r\left((R,+\infty)\right)$ and boundary value $g$ in $L^2\left((0,T)\right).$ This solution satisfies the energy estimate \eqref{aprioriestimate}.

\paragraph{Regular solutions.}
To solve the mixed problem in Sobolev spaces we need some compatibility conditions. For instance, the initial and the boundary conditions imply that necessarily
 \begin{equation}
\mathbf{e}_2\cdot u_{0_{|_{r=R}}}=g_{|_{t=0}}=\mathbf{e}_2\cdot u_{|_{t=0,r=R}},
\end{equation}
if the traces are defined. Let us consider the generic equation $$\partial_t u =f-A(\overline{u})\partial_r u -B(\overline{u},r)u.$$ 
Then, we can formally (this is the meaning of the brackets \textquotedblleft\textquotedblright) define
$$``\partial_t u_{|_{t=0}}" = -A(\overline{u}_0)_{|_{t=0}}\partial_r u_0 -B(\overline{u_0},r)_{|_{t=0}}u_0 +f_{|_{t=0}}.$$ 
Hence, provided traces are defined, 
\begin{equation*}
\begin{aligned}	\mathbf{e}_2\cdot {``\partial_t u_{|_{t=0}}"}_{|_{r=R}}=\mathbf{e}_2\cdot (f_0-A(\overline{u}_0)\partial_r u_0 -B(\overline{u}_0,r)u_0)_{|_{r=R}}=g_1
\end{aligned}\end{equation*}where $g_1:=\partial_t g_{|_{t=0}}$.
These conditions are necessary for the existence of a smooth solution. We can continue the expansion to higher orders looking for more compatibility conditions.
Let us introduce the following notation (as in \cite{Met2} and \cite{Schochet}):
\begin{notation}Let us write the linear equation in \eqref{linearU} 
	$$\partial_t u=F(\overline{u},\partial)u + f$$
	with $$F(\overline{u},\partial)=-A(\overline{u})\partial_r -B(\overline{u},r).$$ 
We define formally the traces $u_j:=``\partial^j_t u_{|_{t=0}}"$ as functions of $u_0$ determined inductively by
  \begin{equation}
  	u_0=u_{|_{t=0}} \ \ \ \ \ \ u_{j+1}= \overline{\mathcal{F}}_{j}(u_0,...u_{j})  +  f_{j}
  	\label{defuj}
  \end{equation}
	with \begin{equation}\overline{\mathcal{F}}_{j}(u_0,...u_{j})= \sum_{p + |k|\leq j}^{}A_{j,p,k} (\overline{u}_0) \overline{u}_{(k)}\partial_r u_p + \sum_{p + |k|\leq j}^{}B_{j,p,k} (\overline{u}_0,r) \overline{u}_{(k)}u_p  \label{barF}\end{equation} where we use the notation
	$$\mbox{for} \ \  \ \  k=(k_1,\ . \ . \ . \ , k_r), \ \ \ \ \ \ \ \ \ \ u_{(k)}=u_{k_1}\ . \ . \ . \ u_{k_r}.$$
\end{notation} 
 Note that $u_j$ is not the derivative of a known function but rather the value that the derivative of $u$ will have if $u$ exists. Therefore necessarily smooth enough solutions to \eqref{linearU} must satisfy
\begin{equation}
	\mathbf{e}_2 \cdot {u_j}_{|_{r=R}}=g_j
	\label{compatibility}
\end{equation}with $g_j:=\partial^j_t g_{|_{t=0}}.$
\begin{definition}
The data $u_0\in H^k_r(\mathbb{R_+})$, $g\in H^k((0,T))$ and $f\in H^k_r((0,T)\times\mathbb{R_+})$ satisfy the compatibility conditions up to order $s\leq k-1$ if \eqref{compatibility} holds for each $j=0,...,s$.
\end{definition}
 For the linear floating structure mixed problem \eqref{linearU} the compatibility conditions \eqref{compatibility} on the initial data $u_0=(\zeta_{e,0}, q_{e,0})$ and the boundary value $g=q_{i_{|_{r=R}}}$ can be written as:
\begin{equation}\begin{aligned}
 {q_e}_{|_{r=R,t=0}}=q_{i_{|_{r=R,t=0}}},  \ \ \ \ \ \ \ \ 
 {\overline{\mathcal{F}}_{j}}_{2}(\zeta_{e,0},...,\zeta_{e,j},q_{e,0},...,q_{e,j})_{|_{r=R}}=\partial_t^j q_{i_{|_{r=R,t=0}}}, \ \ \  \ \ \ \ j\geq1.
\end{aligned}
\label{compacondiFS}
\end{equation} where ${\overline{\mathcal{F}}_{j}}_{2}$ is the second component of ${\overline{\mathcal{F}}_{j}}.$
As in the $L^2$ case, our goal is to find an a priori estimate for the linear problem \eqref{linearU} in order to get existence and uniqueness of the solution in some more regular space.
\begin{proposition}\label{estimateX^k}
Let $T>0$ and $k\geq 1$ be an integer. Assume that $\overline{u} \in X^s(T)$ with $s=\max(k,2)$ satisfies \eqref{subsonic}. With $\alpha$ as in Lemma \ref{p1p2} there are a constant $c_{\alpha,R}$ and non-decreasing functions $C_R(\cdot), C_{1,k,R}(\cdot)$ and $C_{2,k,R}(\cdot)$ on $[0,+\infty)$ such that all the solutions $u \in H_r^{k+1}((0,T)\times (R,+\infty))$ solving \eqref{linearU} satisfy:
\begin{equation}
\begin{aligned}
\Xnorm{u(t)}^2 +& \normsob{u_{|_{r=R}}}{(0,t)}^2 \leq c_{\alpha,R}e^{tC_{\alpha,R,k}(\|\overline{u}\|_{X^s(T)})} \times \\&\times \left(\Xnorm{u_0}^2+\normsob{g}{(0,t)}^2+ K_{k,R}(\|\overline{u}\|_{X^s(T)})\int_{0}^t\|f(\tau)\|^2_{X^k}d\tau\right)
\end{aligned}
\label{aprioriestimateHs}
\end{equation}for all $t\in [0,T]$ with $$C_{\alpha,R,k}(\|\overline{u}\|_{X^s(t)})=1+ \alpha^{-1}C_R(\|\overline{u}\|_{X^2(t)})+ \alpha^{-1}(k+1)C_{1,k,R}(\|\overline{u}\|_{X^s(t)})$$ and
$$K_{k,R}(\|\overline{u}\|_{X^s(T)})=\dfrac{C_{2,k,R}(\|\overline{u}\|_{X^s(T)})}{\max(c(\|\overline{u}(0)\|_{X^2}),c_2 R)}.$$
\end{proposition}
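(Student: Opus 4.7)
The natural strategy is to reduce to the $L^2$ estimate of Proposition \ref{prioriestL2} by commuting time derivatives with the equation, and then to convert tangential (time) regularity into full $X^k$ regularity by using the equation to trade $\partial_r$ for $\partial_t$. Since $A(\overline{u})$ is invertible under the subsonic assumption \eqref{subsonic} (its determinant equals $\overline{q_e}^2/\overline{h_e}^2-g\overline{h_e}\neq 0$), one can write
\begin{equation*}
\partial_r u = A(\overline{u})^{-1}\bigl(f-\partial_t u -B(\overline{u},r)u\bigr),
\end{equation*}
and iterating this identity recovers each $\|\partial_t^j u\|_{H^{k-j}_r}$ from control of $\|\partial_t^{k}u\|_{L^2_r},\ldots,\|\partial_t^{j}u\|_{L^2_r}$ together with the $X^k$-norms of $f$ and lower-order quantities in $\overline{u}$. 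The boundary part of \eqref{aprioriestimateHs} then follows since $\|u_{|_{r=R}}\|_{H^k((0,t))}^2=\sum_{j=0}^{k}\|\partial_t^j u_{|_{r=R}}\|_{L^2((0,t))}^2$ is exactly the sum of the boundary traces given by the $L^2$ estimate applied to each time-differentiated problem.

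More precisely, for $j=0,\ldots,k$ I would set $v_j:=\partial_t^j u$ and apply $\partial_t^j$ to \eqref{linearU}, obtaining
\begin{equation*}
\partial_t v_j + A(\overline{u})\partial_r v_j + B(\overline{u},r)v_j = \partial_t^j f + \mathcal{C}_j(\overline{u},u),\quad \mathbf{e}_2\cdot {v_j}_{|_{r=R}}=g_j,
\end{equation*}
where $\mathcal{C}_j$ gathers the commutator $[\partial_t^j,A(\overline{u})]\partial_r u+[\partial_t^j,B(\overline{u},r)]u$. The initial trace $v_j(0)=u_j$ is given by \eqref{defuj}, and the compatibility conditions \eqref{compacondiFS} ensure that the boundary condition $\mathbf{e}_2\cdot v_j|_{r=R}=g_j$ is consistent with $v_j(0)|_{r=R}$. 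Applying Proposition \ref{prioriestL2} to each $v_j$ gives
\begin{equation*}
\|v_j(t)\|_{L^2_r}^2+\|{v_j}_{|_{r=R}}\|_{L^2((0,t))}^2\leq c_{\alpha,R}e^{tC_{\alpha,R}(\overline{u})}\!\left(\|u_j\|_{L^2_r}^2+\|g_j\|_{L^2((0,t))}^2+\!\int_0^t\!\|\partial_t^j f(\tau)\|_{L^2_r}^2+\|\mathcal{C}_j(\tau)\|_{L^2_r}^2d\tau\right),
\end{equation*}
and summing over $j=0,\ldots,k$ controls $\sum_{j=0}^k\|\partial_t^j u(t)\|_{L^2_r}^2$ and the full $H^k$ trace norm. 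I would then use the invertibility of $A(\overline{u})$ as explained above to recover the missing $\partial_r$ derivatives, absorb the resulting $\|u\|_{X^k}^2$ terms by Gronwall, and identify the initial contribution with $\|u_0\|_{X^k}^2$ via the inductive definition \eqref{defuj} of the $u_j$'s (each $u_j$ being a polynomial in $\partial^\alpha \overline{u}_0$ and $\partial^\alpha u_0$, hence controlled by $\|u_0\|_{X^k}$ times a function of $\|\overline{u}(0)\|_{X^s}$).

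\textbf{Main obstacle.} The technical heart of the argument is the control of the commutators $\mathcal{C}_j$ and of the terms $u_j$ arising from the compatibility conditions. Each is a nonlinear combination of derivatives of $\overline{u}$ and $u$ totalling at most $k+1$ derivatives in $(t,r)$, with the top-order one falling on $\overline{u}$, and they must be estimated in the weighted space $L^2_r((R,+\infty))$ with constants depending only on $\|\overline{u}\|_{X^s(T)}$ (yielding the functions $C_{1,k,R}$ and $C_{2,k,R}$ in the statement). This forces me to develop Moser/tame product estimates in $H^k_r((R,+\infty))$ and composition estimates for $A(\overline{u}),A(\overline{u})^{-1},B(\overline{u},r)$ in this weighted setting, in the spirit of the classical Sobolev tame estimates; the fact that $r\geq R>0$ means the weight is bounded from below and the embedding $H^2_r\hookrightarrow W^{1,\infty}$ already used in \eqref{normW} continues to hold, so no genuine difficulty at the boundary of integration appears, but the constants must be tracked carefully so that the final bound has precisely the form announced, with the two independent nonlinear envelopes $C_{\alpha,R,k}$ and $K_{k,R}$.
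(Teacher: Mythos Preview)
Your proposal is correct and follows essentially the same route as the paper: commute $\partial_t^j$ through the equation, estimate the commutators $[\partial_t^j,A(\overline{u})]\partial_r u+[\partial_t^j,B(\overline{u},r)]u$ in $L^2_r$ via Moser-type bounds (the paper invokes Schochet's lemma $\|A(\overline{u})\|_{X^k}\leq C_k(1+\|\overline{u}\|_{X^k}^k)$ for this), then use invertibility of $A(\overline{u})$ to upgrade the tangential norm $\sum_j\|\partial_t^j u\|_{L^2_r}$ to the full $X^k$ norm and close by Gronwall. The only cosmetic difference is that the paper does not apply Proposition~\ref{prioriestL2} as a black box to each $v_j$ but rather rewrites the symmetrizer differential inequality $(d/dt)(S(\overline{u})u^i,u^i)\leq\ldots$ for each $i$, sums, and applies Gronwall once; this is what produces the additive structure of the exponent $C_{\alpha,R,k}=1+\alpha^{-1}C_R+\alpha^{-1}(k+1)C_{1,k,R}$ announced in the statement, whereas your double-Gronwall variant would yield a (harmlessly) nested exponential.
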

\begin{proof}We adapt here the argument presented in \cite{Met}.
We denote by $u^i$ the tangential derivative $\partial_t^i u$ for $i\leq k$, which in the one dimensional case is simply the time derivative, and we introduce the tangential norm
$$\Xnorm{u(t)}^\prime:=\sum_{i=0}^{k}\normL{\partial_t^i u(t)}{(R,+\infty)}$$ We apply $\partial_t^i$ to the equation of \eqref{linearU} and we get
\begin{equation}
	\partial_t u^i + A(\overline{u})\partial_r u^i + B(\overline{u},r)u^i= [A(\overline{u}),\partial_t^i]\partial_r u + [B(\overline{u},r), \partial_t^i] u + f^i,
	\label{D^iequation}
\end{equation} $$\mathbf{e}_2\cdot u^i_{|_{r=R}}=g^i.$$
As we have done in the previous $L^2$ case we consider $\dfrac{d}{dt}(S(\overline{u})u^i,u^i)_{L^2_r((R,+\infty))}$. The only difference from the previous case is the presence of the two commutator terms in \eqref{D^iequation}. We need to control their $L^2_r$ norms in a different way.\\
The first term can be written under the form $\sum\limits_{\alpha=1,...,i}\|\partial_t^\alpha(A(\overline{u}))\partial_t^{i-\alpha}\partial_r u \|_{L^2_r}$. For $\alpha\leq k-1$ every term of the sum is controlled by
\begin{equation}
	\begin{aligned}\|\partial_t^\alpha (A(\overline{u}))\|_{L^\infty}\| \partial_t^{i-\alpha}\partial_r u \|_{L^2_r}&\leq c_R\|\partial_t^\alpha (A(\overline{u}))\|_{H^{k-\alpha}_r}\| u\|_{X^{k-\alpha+1}}\\&\leq c_R \|A(\overline{u}) \|_{X^k}\|u\|_{X^k}\end{aligned}
	\label{Adrucontrol}\end{equation}using the fact that $1\leq\alpha\leq k-1$ for the Sobolev embedding and that \linebreak$\|\cdot\|_{X^\lambda}\leq\|\cdot\|_{X^\delta}$ if $\lambda < \delta.$ Recall that Sobolev embeddings $H^k_r\hookrightarrow W^{s,\infty}$ still hold for the weighted spaces $H^k_r$ since we are considering the exterior domain $(R,+\infty)$. For $\alpha=k$ we directly have 
$$\|\partial_t^k (A(\overline{u}))\|_{L^2_r}\|\partial_r u\|_{L^\infty}\leq c_R \|A(\overline{u})\|_{X^k}\|u\|_{X^k}$$ since $u\in H^{k+1}_r$ with $k\geq1.$ We can find the same estimate for the commutator term with $B(\overline{u},r)$.\\
We recall the following Moser-type estimate for the $\|\cdot\|_{X^k}$ norm:
\begin{lemma}[Schochet]
For $A(\cdot)$ smooth enough, the following holds 
	\begin{equation}
		\|A(\overline{u})\|_{X^k}\leq C_k (1+\|\overline{u}\|^k_{X^k})
		\label{MoserSchochet}
	\end{equation}with $k\geq 1.$
\end{lemma} We refer to the Appendix B of \cite{Schochet} for the details of the proof based on Gagliardo-Nirenberg inequalities. Better estimates can be used, in particular if one wants to derive blow up conditions as Métivier in \cite{Met2}, but here we are not interested in this problem. Hence we get the following inequality: 
  \begin{equation}
  \begin{aligned}
  &\left(S(\overline{u}) u^i(t),u^i(t)\right)_{L^2((R,+\infty))} \leq \left(S(\overline{u}(0)) u^i_0,u^i_0\right)_{L^2((R,+\infty))}\\&+\int_{0}^{t}\left[\left(S(\overline{u}) f^i(\tau),f^i(\tau)\right)_{L^2((R,+\infty))}+c_2R\left|{g^i(\tau)}\right|^2-c_1R\left|u^i_{|_{r=R}}(\tau)\right|^2\right]d\tau\\&+\left(1+\alpha^{-1}C_R(\|\overline{u}\|_{X^2(T)})\right)\int_{0}^{t}(S(\overline{u})u^i(\tau),u^i(\tau))_{L^2((R,+\infty))}d\tau\\&+ C_{k,R}(\|\overline{u}\|_{X^s(T)})\int_0^t\|u(\tau)\|^2_{X^k}d\tau
  \end{aligned}
  \label{ineqprodS}\end{equation}with $s=\max(k,2)$ and some non-decreasing function $C_{k,R}(\cdot)$ on $[0,+\infty)$. Here $u^i_0=u_i$ where the $u_i$ are defined in \eqref{defuj}. We note that for solutions to \eqref{linearU}, we have $$\Xnorm{u(0)}^\prime=\sum_{j=0}^{k}\|u_j\|_{L^2_r}, \ \ \ \  \ \ \ \ \ \ \Xnorm{u(0)}=\sum_{j=0}^{k}\|u_j\|_{H^{k-j}_r}.$$ 
 For $u$ satisfying the equation \eqref{linearU}, we have 
 \begin{equation*}
 	\partial_r u =A^{-1}(\overline{u})(f -\partial_t u -B(\overline{u},r)u)
 \end{equation*}and we can show that the $X^k$ norm is controlled by the tangential one. The following holds for $0\leq t\leq T$:
  \begin{equation}
  \begin{aligned}\|u(t)\|^2_{X^k}&\leq C_1(\|\overline{u}\|_{X^s(T)}){\|u(t)\|^\prime_{X^k}}^2 + C_2(\|\overline{u}\|_{X^s(T)})\|f(t)\|^2_{X^k}\\&\leq C_1(\|\overline{u}\|_{X^s(T)})\alpha^{-1}\sum_{i=0}^{k}\left(S(\overline{u}) u^i,u^i\right)_{L^2((R,+\infty))}+ C_2(\|\overline{u}\|_{X^s(T)})\|f(t)\|^2_{X^k} \end{aligned}
  \label{tangentialestimate}
  \end{equation} with some non-decreasing functions $C_1(\cdot), C_2(\cdot)$ on $[0,+\infty)$. In the second inequality we have used the fact that $S(\overline{u})\geq \alpha \text{Id}.$ By taking the sum for $i$ from $0$ to $k$ and by applying Gronwall's Lemma we obtain
   \begin{equation}
   \begin{aligned}&\alpha
   {\Xnorm{u(t)}^\prime}^2 +c_1R\|u_{|_{r=R}}\|_{H^k((0,t))}^2\leq \ e^{t\left(1+ \alpha^{-1}C_R(\|\overline{u}\|_{X^2(T)})+ \alpha^{-1}(k+1)C_{1,k,R}(\|\overline{u}\|_{X^s(T)})\right)}\times\\&\times\big( c(\|\overline{u}(0)\|_{X^2}){\Xnorm{u(0)}^\prime}^2+c_2R\left\|g\right\|_{H^k((0,t))}^2+ C_{2,k,R}(\|\overline{u}\|_{X^s(T)})\int_{0}^{t}{\|f(\tau)\|_{X^k}}^2d\tau\big)
   \end{aligned}\label{tang} 
   \end{equation}
   with some non-decreasing functions $C_{1,k,R}(\cdot), C_{1,k,R}(\cdot)$ on $[0,+\infty)$.
   By definition of the tangential norm we have ${\Xnorm{u_0}^\prime}^2\leq {\Xnorm{u_0}}^2 $. We use \eqref{tangentialestimate} and the estimate \eqref{aprioriestimateHs} follows with $$c_{\alpha,R}=\dfrac{\max(c(\|\overline{u}(0)\|_{X^2}),c_2 R)}{\min(\alpha,c_1R)},$$  
    $$C_{\alpha,R,k}(\|\overline{u}\|_{X^s(T)})=1+ \alpha^{-1}C_R(\|\overline{u}\|_{X^2(T)})+ \alpha^{-1}(k+1)C_{1,k,R}(\|\overline{u}\|_{X^s(T)})$$
    and $$K_{k,R}(\|\overline{u}\|_{X^s(T)})=\dfrac{C_{2,k,R}(\|\overline{u}\|_{X^s(T)})}{\max(c(\|\overline{u}(0)\|_{X^2}),c_2 R)}.$$
\end{proof}
 Equivalently to the $L^2$-case, we can state the following theorem:
\begin{theorem}\label{linearex}
	Let $k\geq 1$ be an integer and $T>0$. Suppose $u_0\in H^k_r((R,+\infty))$,\linebreak $g\in H^k((0,T))$ and $f\in  H^k_r((0,T)\times (R,+\infty))$ satisfy the compatibility conditions \eqref{compatibility} up to the order $k-1$. Assume that $\overline{u}\in X^s(T)$ with $s=\max(k,2)$ satisfies \eqref{subsonic}. Then, there is a unique solution $u\in X^k(T)$ to \eqref{linearU}. Its trace on $r=R$ belongs to $H^k((0,T))$ and $u$ satisfies the energy estimate \eqref{aprioriestimateHs}.
\end{theorem}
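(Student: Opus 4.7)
The plan is to combine the $L^2_r$ existence result quoted after Proposition \ref{prioriestL2} (Métivier's Theorem 2.4.5) with the higher-order a priori estimate of Proposition \ref{estimateX^k} via a standard regularization argument, and to read off the $H^k$ boundary trace bound from the same a priori estimate.

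\textbf{Uniqueness.} Suppose $u^1, u^2 \in X^k(T)$ are two solutions of \eqref{linearU} associated with the same data $(u_0, g, f)$ and with the same coefficient state $\overline{u}$. The difference $w := u^1 - u^2$ solves \eqref{linearU} with zero source, zero boundary value, and zero initial condition. Since $\overline{u} \in X^2(T)$ satisfies \eqref{subsonic}, the $L^2_r$ a priori estimate of Proposition \ref{prioriestL2} applied to $w$ yields $w \equiv 0$ (and $w_{|_{r=R}} = 0$ in $L^2((0,T))$), so uniqueness holds in $X^0(T)$ and a fortiori in $X^k(T)$.

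\textbf{Existence.} I would approximate $(u_0, g, f)$ by smoother data $(u_0^\varepsilon, g^\varepsilon, f^\varepsilon)$ for which the formally-defined traces $u_j^\varepsilon$ of \eqref{defuj} satisfy the compatibility identities \eqref{compatibility} with $g_j^\varepsilon := \partial_t^j g^\varepsilon_{|_{t=0}}$. Then I would produce smooth approximants $u^\varepsilon$ by combining the $L^2_r$ existence result with a bootstrap based on differentiating \eqref{linearU} in time: each $\partial_t^j u^\varepsilon$ solves an equation of the form \eqref{D^iequation} with initial trace $u_j^\varepsilon$ and boundary value $g_j^\varepsilon$, so iteratively applying the $L^2_r$ theorem delivers $u^\varepsilon \in X^k(T)$. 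Proposition \ref{estimateX^k} bounds $\|u^\varepsilon\|_{X^k(T)}$ and $\|u^\varepsilon_{|_{r=R}}\|_{H^k((0,T))}$ uniformly in $\varepsilon$ in terms of the (smooth) data norms, which converge to those of $(u_0,g,f)$. Applying Proposition \ref{prioriestL2} to $u^{\varepsilon_1} - u^{\varepsilon_2}$ shows that $\{u^\varepsilon\}$ is Cauchy in $X^0(T)$; combined with the uniform $X^k(T)$-bound, interpolation and weak-$\ast$ compactness identify the limit $u \in X^k(T)$ whose trace lies in $H^k((0,T))$ by lower semicontinuity, and passing to the limit in \eqref{linearU} gives a solution with the claimed regularity.

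\textbf{Main obstacle.} The delicate step is the construction of compatibility-preserving regularizations: a naive mollification of $(u_0, g, f)$ in general destroys the algebraic identities \eqref{compatibility} at the corner $\{t = 0, r = R\}$. The standard fix, which I would follow, is to mollify $u_0^\varepsilon$ and $g^\varepsilon$ freely, and then correct $f^\varepsilon$ in a shrinking time-neighborhood of $t = 0$ so that the traces $u_j^\varepsilon$ generated by \eqref{defuj} match $g_j^\varepsilon$ at $r = R$ for $j=0,\dots,k-1$. Since each such correction amounts to prescribing the trace of $\partial_t^j f^\varepsilon$ at the single point $(t,r)=(0,R)$, it can be realized by a finite-rank perturbation without affecting the convergence of $f^\varepsilon$ to $f$ in $H^k_r((0,T)\times(R,+\infty))$. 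Once this construction is carried out, the remainder of the proof is a direct consequence of the two a priori estimates already at our disposal.
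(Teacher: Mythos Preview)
Your overall strategy---approximate by smoother compatible data, solve, and pass to the limit via the a priori estimates---is the paper's as well. The execution differs in two places where the paper's route is cleaner. First, to produce the smooth approximating solution the paper does not bootstrap time derivatives from the $L^2_r$ theorem; with $H^{k+2}$ data satisfying compatibility to order $k$ it extends by zero for $t<0$ and invokes M\'etivier's existence result on $(-\infty,T]\times(R,+\infty)$, obtaining $u^\varepsilon\in H^{k+1}_r((0,T)\times(R,+\infty))\subset X^k(T)$ in one stroke. Your bootstrap is workable, but as written it is circular: the source in \eqref{D^iequation} for $\partial_t u^\varepsilon$ contains $\partial_r u^\varepsilon$, which is not yet known to lie in $L^2_r$; you would first have to substitute $\partial_r u^\varepsilon=A(\overline{u})^{-1}(f-\partial_t u^\varepsilon-Bu^\varepsilon)$ (cf.\ \eqref{tangentialestimate}) before the iteration closes. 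Second, and more importantly, the paper exploits the \emph{linearity} of \eqref{linearU} to show $\{u^\varepsilon\}$ is Cauchy \emph{directly in $X^k(T)$}: the difference $u^{\varepsilon_1}-u^{\varepsilon_2}$ solves \eqref{linearU} with the data differences, so Proposition \ref{estimateX^k} controls $\|u^{\varepsilon_1}-u^{\varepsilon_2}\|_{X^k(T)}$ by quantities tending to zero, and completeness of $X^k(T)$ finishes. Your detour through Cauchy in $X^0(T)$ plus weak-$\ast$ compactness and lower semicontinuity only yields $\partial_t^j u\in L^\infty([0,T];H^{k-j}_r)$ in the limit; recovering the \emph{strong} continuity in time that defines $X^k(T)$ is a separate argument you do not supply, and it is entirely avoidable here.

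A smaller slip in your ``main obstacle'': mollifying $u_0$ and $g$ freely and correcting only $f$ cannot repair the order-zero condition $\mathbf{e}_2\cdot u_0^\varepsilon{}_{|_{r=R}}=g^\varepsilon(0)$, since by \eqref{defuj} the trace $u_0$ does not involve $f$ at all. One must also adjust $u_0^\varepsilon$ or $g^\varepsilon$ (this is the Rauch--Massey construction \cite{Rauch}); the paper simply asserts the existence of compatible $H^{k+2}$ approximations and refers to \cite{Met2} for the details.
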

\begin{proof}
We show only the idea of the proof of the existence. For more details and the proof of uniqueness see \cite{Met2}. First we solve the equation with a loss of smoothness. We consider the data $u_0$, $f$ and $g$ in $H^{k+2}_r$ satisfying the compatibility conditions up to order $k$. One can prove that there is a solution in $H^{k+1}_r((0,T)\times (R,+\infty))\subset X^{k}(T)$, by extending the data by 0 for $t<0$ and then by applying the existence result for the mixed problem in $(-\infty,T]\times (R,+\infty)$ of \cite{Met}.\\The second step is to consider $H^k$-data: we use the compatibility conditions up to order $k-1$ to approximate $u_0$, $f$ and $g$ in $H^k_r$ and $H^k$ with sequences \linebreak$u_0^n \in H^{k+2}_r((R,+\infty))$, $f^n\in H^{k+2}_r((0,T)\times(R,+\infty))$ and $g^n\in H^{k+2}((0,T))$ satisfying the compatibility conditions up to order $k+1$. From the previous argument and the energy estimate \eqref{aprioriestimateHs} we have that $u^n$ is a Cauchy sequence in $X^k(T)$ and therefore converges to the limit $u\in X^k(T)$, which is a solution to \eqref{linearU} since $k\geq 1$.
\end{proof}

\subsection{The quasilinear problem and application to the case of a solid with prescribed motion}
In the particular case of the floating structure problem, the boundary condition in \eqref{quasilinearU} is $g=q_{i_{|_{r=R}}}$, the value of the horizontal discharge in the interior domain at the boundary $r=R$. We will see in the next section that this quantity is strictly linked to the solid motion, in particular to the vertical component of the velocity of the center of mass $w_G(t)$.\\ 
In the case of a solid with prescribed motion, the boundary condition $g$ is a datum of the problem. Hence, after having studied the linear problem \eqref{linearU}, one can use a standard iterative scheme argument in order to get existence and uniqueness of the solution to \eqref{quasilinearU}. 
\begin{theorem}\label{prescribedmotion}
	Consider a solid with a prescribed motion. For $k\geq 2$, let $u_0 \in H_r^k((R,+\infty))$ and $w_G\in H^{k}((0,T))$ satisfy the compatibility conditions in Definition \ref{compcondFS} up to order $k-1$. Assume that $u_0$ satisfies \eqref{subsonic}. Then, the fluid problem \eqref{couplingFluid} with boundary condition $-\frac{R}{2}w_G(t)$ admits a unique solution $u\in X^k(T)$ with $X^k(T)$ as in \eqref{Xk}.	
\end{theorem}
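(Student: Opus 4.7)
The plan is a Picard-type iteration built on the linear existence Theorem \ref{linearex}. Starting from $u^0(t,r)$ chosen as a smooth time-extension of $u_0$ that carries the prescribed formal traces $u_j|_{t=0}$ for $j=0,\ldots,k-1$ (obtained from the quasilinear analogue of the recursion \eqref{defuj} applied to the data $u_0$ and $g(t)=-\frac{R}{2}w_G(t)$), I would define inductively $u^{n+1}$ as the unique solution in $X^k(T')$ of the linear mixed problem
\begin{equation*}
L(u^n)u^{n+1}=0,\qquad \mathbf{e}_2\cdot u^{n+1}_{|r=R}=-\tfrac{R}{2}w_G,\qquad u^{n+1}_{|t=0}=u_0,
\end{equation*}
on a time interval $(0,T')$ with $T'\leq T$ to be determined. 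Because $u^n$ is built to share with the target quasilinear solution the formal traces at $t=0$, the compatibility conditions required by Theorem \ref{linearex} for the linearized problem coincide with those already assumed for the quasilinear problem, and one checks inductively that $u^{n+1}$ again carries the correct traces.

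Next I would derive a uniform $X^k(T')$ bound. Applying the estimate \eqref{aprioriestimateHs} with $\overline{u}=u^n$ yields
\begin{equation*}
\|u^{n+1}\|_{X^k(T')}^2\leq c_{\alpha,R}\,e^{T' C_{\alpha,R,k}(\|u^n\|_{X^k(T')})}\Bigl(\|u_0\|_{X^k}^2+\tfrac{R^2}{4}\|w_G\|_{H^k(0,T')}^2\Bigr),
\end{equation*}
where the subsonic and non-vanishing height constants entering $\alpha$ and $c_{\alpha,R}$ are controlled uniformly for $u^n$ in a ball $B_M\subset X^k(T')$, thanks to the embedding $X^k(T')\hookrightarrow C^0([0,T'],W^{1,\infty})$ for $k\geq 2$. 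A standard fixed-point choice of $M$ (essentially twice the norm of the data) and of $T'$ small enough then keeps all $u^n$ in $B_M$ and preserves the subsonic regime \eqref{subsonic}.

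For convergence I would study $v^n=u^{n+1}-u^n$, which solves
\begin{equation*}
L(u^n)v^n=-(A(u^n)-A(u^{n-1}))\partial_r u^n-(B(u^n,r)-B(u^{n-1},r))u^n,
\end{equation*}
with zero initial and boundary data. The right-hand side is bilinear in $v^{n-1}$ and the iterates, so combining the $L^2_r$ energy estimate of Proposition \ref{prioriestL2} with Lipschitz estimates for $A$ and $B$ on $B_M$ gives
\begin{equation*}
\|v^n\|_{L^\infty_t L^2_r}^2\leq T'\, C(M)\,\|v^{n-1}\|_{L^\infty_t L^2_r}^2,
\end{equation*}
a strict contraction for $T'$ small. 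Hence $(u^n)$ is Cauchy in $C^0([0,T'],L^2_r)$ and converges to some $u$; interpolating this strong low-regularity convergence against the uniform $X^k(T')$ bound yields $u\in X^k(T')$, and passage to the limit in the linearized equation identifies $u$ as a solution of the quasilinear problem \eqref{quasilinearU}. Uniqueness follows by applying the same difference argument to two solutions with identical data.

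The main obstacle I expect is the bookkeeping of the compatibility conditions through the iteration, because the formal traces $u_j^{n+1}|_{t=0}$ produced by the linear theorem depend, through the recursion \eqref{defuj}, on the traces of the coefficient $u^n$ at $t=0$; one must ensure the seed $u^0$ and the definition of the iteration are consistent so that these traces remain equal to the prescribed $u_j$ at every step. The assumption $k\geq 2$ enters twice: once to control $A(u)$, $B(u,r)$ in $L^\infty$ via the weighted Sobolev embedding on $(R,+\infty)$, and once to make the formal trace $u_1$ meaningful in $H^{k-1}_r$. Once these points are settled the uniform bound, contraction and limit steps are essentially standard.
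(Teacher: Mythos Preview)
Your proposal is correct and follows essentially the same approach as the paper: the paper only gives a sketch (iterative scheme $L(u^{n-1})u^n=0$, uniform control in the ``big norm'' $X^k$, convergence in the ``small norm'' $X^0$), referring to \cite{Met2} and to the detailed proof of Theorem~\ref{maintheorem} for the full argument, and your write-up fills in exactly those details. One very minor point: the seed $u^0$ should carry the formal traces $u_j$ for $j=0,\ldots,k$ (not just $k-1$), as in the paper's construction, so that the inductive verification $\partial_t^j u^n|_{t=0}=u_j$ goes through at the top level.
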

\begin{proof}[Sketch of the proof]
We introduce the iterative scheme by defining the sequence $(u^n)_n$ with $u^{n}$ solution to the linear problem $L(u^{n-1})u^{n}=0.$ 
The existence of such a sequence is given by Theorem \ref{linearex}. Once we have showed the control of the sequence in some \textquotedblleft big norm\textquotedblright  and the convergence in some \textquotedblleft small norm\textquotedblright , the limit $u$ of $(u^n)_n$ is the solution to \eqref{quasilinearU}. For more details we refer to \cite{Met2}. We will show a detailed proof in the case of a free motion in Theorem \ref{maintheorem} below. 
\end{proof}
From this point on we consider a solid with free motion. Therefore the boundary condition is still an unknown of the problem and we must adapt the classical argument used in Theorem \ref{prescribedmotion} to our problem introducing an iterative scheme for the fluid-structure coupled system.
The details of this coupled iterative scheme argument are given in Section 5. Before, we deal with the solid problem and we deduce an ordinary differential equation describing the motion of its center of mass.

\section{The solid equation}
In this section we address the motion of the solid. We recall that we are considering a floating structure moving only vertically.\\ Denoting $m$ the mass of the body, $g$ the gravity acceleration and $z_G$ the vertical position of the center of mass, we consider only the vertical component of Newton's law for the conservation of linear momentum:
\begin{equation}
m \ddot{z}_G = -mg + F_{\mathrm{fluid}}
\label{newton}
\end{equation} where $F_{\mathrm{fluid}}=2\pi\int_{0}^{R}(\underline{P}_i-P_{\mathrm{atm}})rdr$ is the resulting vertical force exerted by the fluid on the solid.\\
Let us introduce the displacement $\delta_G(t) := z_G(t) - z_{G,eq}$ between the vertical position of the center of mass at time $t$ and at its equilibrium. In the case of vertical motion $h_w(t,r)=h_{w,eq}(r) + \delta_G(t)$, where $h_{w,eq}$ is the fluid height at the equilibrium.
For simplicity we consider a cylindrically symmetric solid with flat bottom, which means that the wetted surface $\zeta_w$ (hence $h_w$) does not depend on the spatial coordinate in the interior domain $(0,R)$. See Appendix A for the general case with a cylindrically symmetric solid with a non-flat bottom.\\
\begin{proposition}\label{solideq}
Newton's law \eqref{newton} can be written under the following form:
\begin{equation}
(m+m_a(\delta_G)) \ddot{\delta}_G(t)= -\mathfrak{c} \delta_G(t) + \mathfrak{c} \zeta_e (t,R) + \left(\dfrac{\mathfrak{b}}{h_e^2(t,R)}+\beta(\delta_G)\right)\dot{\delta}_G^2(t)
\label{eqmotion}
\end{equation} with $$ \mathfrak{c}=\rho g \pi R^2, \ \ \ \ \ \ \mathfrak{b}=\dfrac{\pi \rho R^4}{8},$$
$$m_a(\delta_G)=\dfrac{\mathfrak{b}}{ h_w(\delta_G)}=\dfrac{\mathfrak{b}}{ h_{w,eq}+\delta_G(t)},$$ $$\beta(\delta_G)=\dfrac{\mathfrak{b}}{2h^2_w(\delta_G)}=\dfrac{\mathfrak{b}}{2(h_{w,eq}+\delta_G(t))^2}.$$
\end{proposition}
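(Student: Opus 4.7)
The strategy is to compute explicitly the resulting vertical force $F_{\mathrm{fluid}}=2\pi\int_0^R(\underline{P}_i-P_{\mathrm{atm}})\,r\,dr$ by using the decomposition $\underline{P}_i=\underline{P}_i^{\mathrm{I}}+\underline{P}_i^{\mathrm{II}}+\underline{P}_i^{\mathrm{III}}$ introduced in \eqref{P1}--\eqref{P4}. The crucial simplification in the flat-bottom, axisymmetric-without-swirl setting is that $h_w$ depends only on time, $h_w(t)=h_{w,\mathrm{eq}}+\delta_G(t)$, with $\dot h_w=\dot\delta_G$ and $\ddot h_w=\ddot\delta_G=\dot w_G$. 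As a preliminary step, I would plug $h_w(t)$ into the first equation of \eqref{WWFloatpolar} restricted to $(0,R)$, write it as $\tfrac{1}{r}\partial_r(rq_i)=-\dot\delta_G$ and integrate from $0$, using regularity at the axis, to obtain the explicit expression $q_i(t,r)=-\tfrac{r}{2}\dot\delta_G(t)$, which in particular gives the boundary value $q_i|_{r=R}=-\tfrac{R}{2}\dot\delta_G$ announced in the introduction.

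With $q_i$ and $h_w$ at hand, the three elliptic problems \eqref{P1}--\eqref{P4} all become radial ODEs with constant (in $r$) coefficient $h_w$, so each can be solved in closed form. For $\underline{P}_i^{\mathrm{I}}$, the right-hand side of \eqref{P1} simplifies (after plugging $q_i$ and $\partial_r h_w=0$) to a constant in $r$ times $\tfrac{\dot\delta_G^2}{h_w}$; one integration of $\partial_r(r\partial_r\underline{P}_i^{\mathrm{I}})$ together with the Dirichlet condition ${\underline{P}_i^{\mathrm{I}}}|_{r=R}=P_{\mathrm{atm}}$ yields $\underline{P}_i^{\mathrm{I}}(t,r)-P_{\mathrm{atm}}=\tfrac{3\rho\dot\delta_G^2}{8h_w^2}(R^2-r^2)$. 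An identical procedure for \eqref{P2}, whose source is simply $-\ddot\delta_G$, gives $\underline{P}_i^{\mathrm{II}}(t,r)=-\tfrac{\rho\ddot\delta_G}{4h_w}(R^2-r^2)$. For \eqref{P4}, the bulk equation reduces to $\tfrac{1}{r}\partial_r(r\partial_r \underline{P}_i^{\mathrm{III}})=0$, whose only solution bounded at $r=0$ is a constant equal to the boundary datum $\rho g(\zeta_e-\zeta_i)|_{r=R}+P_{\mathrm{cor}}$; using $\zeta_i(t,R)=\zeta_{w,\mathrm{eq}}+\delta_G(t)$ and the formula $P_{\mathrm{cor}}=\tfrac{\rho}{2}q_i^2|_{r=R}\llbracket h^{-2}\rrbracket$ from Proposition \ref{conservationenergy}, this constant becomes $\rho g(\zeta_e(t,R)-\zeta_{w,\mathrm{eq}}-\delta_G)+\tfrac{\rho R^2\dot\delta_G^2}{8}\bigl(h_e^{-2}(t,R)-h_w^{-2}\bigr)$.

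It then remains to integrate $2\pi\int_0^R(\cdot)\,r\,dr$ on each of the three pieces and sum. The contribution from $\underline{P}_i^{\mathrm{I}}$ is $\tfrac{3\mathfrak b}{2h_w^2}\dot\delta_G^2$, the one from $\underline{P}_i^{\mathrm{II}}$ is $-\tfrac{\mathfrak b}{h_w}\ddot\delta_G=-m_a(\delta_G)\ddot\delta_G$, and the one from $\underline{P}_i^{\mathrm{III}}$, being constant, equals $\pi R^2$ times that constant, which produces $\mathfrak c\zeta_e(t,R)-\mathfrak c\delta_G-\mathfrak c\zeta_{w,\mathrm{eq}}+\mathfrak b\dot\delta_G^2\bigl(h_e^{-2}(t,R)-h_w^{-2}\bigr)$, using $\mathfrak c=\pi\rho g R^2$ and $\mathfrak b=\pi\rho R^4/8$. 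Finally I would evaluate Newton's law \eqref{newton} at the equilibrium configuration (where $\delta_G=\dot\delta_G=\ddot\delta_G=0$ and $\zeta_e=0$) to identify the hydrostatic balance $mg+\mathfrak c\zeta_{w,\mathrm{eq}}=0$, which eliminates the two constant terms, and collect the remaining $\dot\delta_G^2$ coefficients as $\tfrac{3\mathfrak b}{2h_w^2}-\tfrac{\mathfrak b}{h_w^2}+\tfrac{\mathfrak b}{h_e^2(t,R)}=\beta(\delta_G)+\tfrac{\mathfrak b}{h_e^2(t,R)}$. Moving the $m_a(\delta_G)\ddot\delta_G$ piece to the left-hand side gives exactly \eqref{eqmotion}. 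The only place where one must be attentive is the cancellation between the inertial/convective contribution from $\underline{P}_i^{\mathrm{I}}$ and the $h_w^{-2}$ part of $P_{\mathrm{cor}}$, whose purpose is precisely to reduce the two $\dot\delta_G^2$ coefficients to the compact form $\beta(\delta_G)$; this is the energy-conservation role of $P_{\mathrm{cor}}$ isolated in Proposition \ref{conservationenergy}, and it is the only nontrivial accounting step of the proof.
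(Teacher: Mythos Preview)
Your proof is correct and follows essentially the same decomposition and computations as the paper. The only methodological difference is one of presentation: the paper computes $F^{\mathrm{II}}_{\mathrm{fluid}}$ (and partly $F^{\mathrm{I}}_{\mathrm{fluid}}$) through the elementary potential $\Phi_\mathcal{I}^r$ and Green's identities, working with $\partial_r\underline{P}_i$ rather than with $\underline{P}_i$ itself, whereas you solve each of the three radial ODEs \eqref{P1}--\eqref{P4} explicitly for the pressure profiles and then integrate. Your direct route is slightly more elementary in the flat-bottom case; the paper's variational formulation via $\Phi_\mathcal{I}^r$ has the advantage of carrying over unchanged to the non-flat-bottom setting of Appendix~A, where $h_w$ depends on $r$ and the closed-form pressures are no longer available.
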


\begin{remark}
In \eqref{eqmotion} $m_a(\delta_G)$ is called the \textit{added mass} term and it represents the fact that, in order to move in the fluid, the solid has to accelerate itself but also the portion of fluid next to it. This effect appears in other hydrodynamical configurations, in particular for totally submerged solids studied by Glass, Sueur and Takahashi \cite{SueurTaka} and Glass, Munnier and Sueur \cite{VortexP}. It has an important role for the stability of numerical simulations of fluid-structure interactions \cite{NumAddesMass}.\\
The coupling with the fluid motion is given by the term $\zeta_e(t,R)$ and $\dfrac{\mathfrak{b}}{h_e^2(t,R)}$, which means that the solid motion depends on the value of the elevation of the exterior free surface at the boundary $r=R$.
\label{remarkcoup}
\end{remark}

\begin{proof}
We decompose $F_{\mathrm{fluid}}$ according to the decomposition \eqref{P1} - \eqref{P4} of the pressure 
$$F_{\mathrm{fluid}}=F^\mathrm{I}_{\mathrm{fluid}}+F^\mathrm{II}_{\mathrm{fluid}}+F^\mathrm{III}_{\mathrm{fluid}}$$ with $$F^\mathrm{I}_{\mathrm{fluid}}=2\pi\int_{0}^R(\underline{P}_i^\mathrm{I}-P_{\mathrm{atm}})rdr, \ \ \ \ \ \ \ \ \  F^\mathrm{j}_{\mathrm{fluid}}=2\pi\int_{0}^{R}\underline{P}_i^\mathrm{j}rdr, \ \ \ j=\mathrm{II},\mathrm{III}.$$
Using the elementary potential $\Phi_\mathcal{I}^r$ defined in \cite{Lan} we can write 
\begin{equation*}F^\mathrm{II}_{\mathrm{fluid}}=-2\pi\int_{0}^{R}\underline{P}_i^\mathrm{II}\left(\partial_r +\frac{1}{r}\right)\left(h_w\partial_r \Phi_\mathcal{I}^r\right)rdr,\end{equation*}and, after integration by parts,
\begin{equation*}
\begin{aligned}
F^\mathrm{II}_{\mathrm{fluid}}&=-2\pi\int_{0}^{R}\left(\partial_r +\frac{1}{r}\right)\left(h_w\partial_r \underline{P}_i^\mathrm{II}\right)\Phi_\mathcal{I}^rrdr\\&=-2\pi\rho\int_{0}^R\dot{w}_G\Phi_\mathcal{I}^rrdr,\end{aligned}\end{equation*}
where the second equality comes from the definition \eqref{P2} of $\underline{P}_i^\mathrm{II}$. Using again the definition of elementary potential we obtain
\begin{equation*}
	\begin{aligned}
	F^\mathrm{II}_{\mathrm{fluid}}&=2\pi\rho\left[\int_{0}^{R}\Phi_\mathcal{I}^r\left(\partial_r +\frac{1}{r}\right)\left(h_w\Phi_\mathcal{I}^r\right)\right]\dot{w}_Grdr\\
	&=-2\pi\rho\left[\int_{0}^{R}\frac{1}{h_w}\left(h_w\partial_r\Phi_\mathcal{I}^r\right)^2\right]\dot{w}_Grdr.
	\end{aligned}
\end{equation*}From the definition of the elementary potential we explicitly have that
\begin{equation*}
	h_w\partial_r\Phi_\mathcal{I}^r=-\frac{r}{2}.
\end{equation*}
It follows that
\begin{equation*}
	F^\mathrm{II}_{\mathrm{fluid}}=-m_a(h_w)\dot{w}_G,
\end{equation*} 
with $m_a(h_w)$ as in \eqref{eqmotion}. Proceeding similarly we can write also 
 \begin{equation*}
F^\mathrm{\mathrm{I}}_{\mathrm{fluid}}=-2\pi\rho\int_{0}^{R}\dfrac{r}{2h_w}\dfrac{h_w}{\rho}\partial_r\underline{P}^\mathrm{I}_irdr.\end{equation*} Then, \eqref{newton} becomes
\begin{equation}
\left(m+m_a(h_w)\right)\dot{w}_G= -mg + F^\mathrm{I}_{\mathrm{fluid}}+2\pi\int_{0}^R\underline{P}_i^\mathrm{III}rdr.
\label{addedmasseffect}
\end{equation}
Moreover,  \eqref{P1} can be written as $$\partial_r y(r) + \dfrac{y(r)}{r}=-\dfrac{b(r)}{r} \ \ \ \ \ \ \mbox{in} \ \ \ (0,R)$$ with $$y(r)=\dfrac{h_w}{\rho}\partial_r \underline{P}_i^\mathrm{I}+\partial_r\left(\dfrac{q_i^2}{h_w}\right) \ \  \ \ \ \mbox{and} \ \ \ \ \   \  \ \ b(r)=\partial_r \left(\dfrac{q_i^2}{h_w}\right).$$ Hence we have \begin{equation}y(r)=-\dfrac{1}{r}\left(\dfrac{q_i(r)^2}{h_w}-\dfrac{q_i(0)^2}{h_w(0)}\right).
\label{ry}\end{equation}
Because of the constraint \eqref{interiorconstraint}, the mass conservation equation of \eqref{WWFloatpolar} in the interior domain becomes
$$\partial_r q_i + \dfrac{1}{r}q_i = -\dot{\delta}_G(t) \ \ \ \ \mbox{in} \ \ \ (0,R)$$ then we have \begin{equation}
q_i(t,r)=-\dfrac{r}{2}\dot{\delta}_G(t).
\label{coupboundvalue}
\end{equation} 
Hence $q_i(t,0)=0$ and \eqref{ry} becomes $$\dfrac{h_w}{\rho}\partial_r \underline{P}_i^\mathrm{I}=-\partial_r\left(\dfrac{q_i^2}{h_w}\right)-\dfrac{1}{r}\dfrac{q_i^2}{h_w}=-\dfrac{3}{4h_w}r\dot{\delta}_G^2.$$ Replacing the expression of $\dfrac{h_w}{\rho}\partial_r \underline{P}_i^\mathrm{I}$ in $F^\mathrm{I}_{\mathrm{fluid}}$ we get $$F^\mathrm{I}_{\mathrm{fluid}}=\dfrac{3\pi \rho R^4}{16h^2_w}\dot{\delta}_G^2$$
and, by definition of the equilibrium state, we have 
\begin{equation}-mg -2\pi\rho g\int_{0}^{R}\zeta_{w,eq}rdr=0.\label{defeq}\end{equation} Since the solid has vertical side-walls the following equality holds
\begin{equation}
2\pi\rho g \int_{0}^R\zeta_w(t)rdr-2\pi\rho g \int_{0}^R\zeta_{w,eq}rdr=\mathfrak{c}\delta_G(t).\label{diffvol}\end{equation} These two equalities, together with the constraint $\zeta_i=\zeta_w$, give $$-mg=\mathfrak{c} \zeta_i (t,R)-\mathfrak{c} \delta_G(t).$$
Solving the elliptic problem \eqref{P4} whose solution is the constant (in space) boundary value, we obtain the nonlinear second order ordinary differential equation \eqref{eqmotion}.
\end{proof}
\begin{remark}
All the computations in the proof of Proposition \ref{solideq} reduce to particular cases of the Green's identity:
\begin{equation}
	\int_0^R p \text{div}_r (h_w\partial_r q) rdr= \int_{r=R} h_w p \partial_r q r - \int_{r=R} h_w q \partial_r p r+ \int_{0}^R q \text{div}_r(h_w \partial_r p)rdr
\end{equation} with particular $p$ and $q=\Phi^r_\mathcal{I}$,  where $div_r=\partial_r + \frac{1}{r}$ is the divergence in the axisymmetric configuration.
\end{remark}
 Recall that in \eqref{coupboundvalue} we have 
\begin{equation*}
	q_i(t,R)= -\frac{R}{2}\dot{\delta}_G(t). 
\end{equation*}This term is the boundary value in the fluid mixed problem \eqref{WWFloatpolarext}. It follows that this is the coupling term between the fluid and the solid motion in the fluid system, as $\zeta_e(t,R)$ has the same property in the solid equation (see Remark \ref{remarkcoup}).

\section{Fluid-structure coupling}
From the previous two sections, it follows that the fluid-structure interaction problem considered in this paper is described by the following mathematical model:
\begin{proposition}
The nonlinear shallow water equations with a floating structure for an axisymmetric flow without swirl take the form 
\begin{equation}
\begin{cases}
\partial_t u +A(u)\partial_r u +B(u,r)u=0 \ \ \ \ \mbox{in} \ \ \ (R,+\infty)\\
\mathbf{e}_2 \cdot u_{|_{r=R}}=-\dfrac{R}{2}\dot{\delta}_G(t)\\[5pt]
u(0)=u_0,
\end{cases}
\label{couplingFluid}
\end{equation}
with $A(u), B(u,r)$ as in \eqref{quasilinearU}. Moreover the solid motion is given by the Cauchy problem
\begin{equation}
\begin{cases}
(m+m_a(\delta_G)) \ddot{\delta}_G(t)= -\mathfrak{c} \delta_G(t) + \mathfrak{c} {\mathbf{e}_1 \cdot u}_{|_{r=R}}  +\left(\mathfrak{b}(u)+\beta(\delta_G)\right)\dot{\delta}_G^2(t),\\
\delta_G(0)=\delta_0,\\
\dot{\delta}_G(0)=\delta_1,
\end{cases}
\label{couplingSolid}
\end{equation}with $m_a(\delta_G)$, $\beta(\delta_G)$ as in \eqref{eqmotion},
$$\mathfrak{b}(u)=\dfrac{\mathfrak{b}}{\left(\mathbf{e}_1 \cdot u_{|_{r=R}}+h_0\right)^2} =\dfrac{\mathfrak{b}}{h_e(t,R)^2},$$using the fact that $$\zeta_e(t,R)={\mathbf{e}_1 \cdot u}_{|_{r=R}} \ \ \ \ \ \ \mbox{and} \ \ \ \ \ \ h_e(t,R)= h_0 + \zeta_e(t,R).$$
\end{proposition}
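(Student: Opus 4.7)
The statement is a compilation result: every ingredient has already been derived in the preceding sections, and the proposition simply repackages them in a form suitable for the coupled well-posedness analysis. My plan is therefore to assemble the pieces one at a time and verify that the resulting system matches the claimed expressions.

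First, I would recall the derivation of the quasilinear form of the exterior fluid equations. Setting $u=(\zeta_e,q_e)^T$ in \eqref{WWFloatpolarext} and computing the Jacobian of the flux vector yields precisely the matrices $A(u)$ and $B(u,r)$ introduced in \eqref{quasilinearU}; no further calculation is needed since this was already done in Section 3. The initial condition $u(0)=u_0$ is simply the Cauchy data, so the first and third lines of \eqref{couplingFluid} are immediate.

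The second line of \eqref{couplingFluid}, namely the boundary condition $\mathbf{e}_2\cdot u_{|_{r=R}}=-\tfrac{R}{2}\dot\delta_G$, is the heart of the reduction. I would proceed in two steps. First, the transition condition \eqref{neumannq} gives ${q_e}_{|_{r=R}}={q_i}_{|_{r=R}}$, so it suffices to compute $q_i$ on $\Gamma$. Second, I would use the contact constraint \eqref{interiorconstraint} together with the fact that the solid moves purely vertically with a flat bottom, so $\partial_t \zeta_i=\partial_t \zeta_w=\dot\delta_G$. Inserting this into the interior mass conservation equation of \eqref{WWFloatpolar} yields the linear ODE
\begin{equation*}
\partial_r q_i+\tfrac{1}{r}q_i=-\dot\delta_G(t)\quad\text{in }(0,R),
\end{equation*}
which, multiplied by $r$, integrates to $rq_i(t,r)=-\tfrac{r^2}{2}\dot\delta_G(t)+C(t)$. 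The regularity of $q_i$ at $r=0$ forces $C(t)=0$ (this is exactly \eqref{coupboundvalue}), so $q_i(t,r)=-\tfrac{r}{2}\dot\delta_G(t)$ and in particular ${q_i}_{|_{r=R}}=-\tfrac{R}{2}\dot\delta_G(t)$, giving the desired boundary condition.

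For the solid part \eqref{couplingSolid} I would invoke Proposition \ref{solideq} directly: the second order ODE \eqref{eqmotion} is already in the required form, with the coefficients $m_a(\delta_G)$, $\beta(\delta_G)$, $\mathfrak{c}$ and $\mathfrak{b}$ as stated. The only task left is to rewrite the coupling terms in terms of $u$. By definition of $u=(\zeta_e,q_e)^T$ one has $\zeta_e(t,R)=\mathbf{e}_1\cdot u_{|_{r=R}}$, and since $h_e=\zeta_e+h_0$ one obtains $h_e(t,R)=h_0+\mathbf{e}_1\cdot u_{|_{r=R}}$; substituting these two identities in \eqref{eqmotion} produces exactly the expressions $\mathfrak{c}\,\mathbf{e}_1\cdot u_{|_{r=R}}$ and $\mathfrak{b}(u)=\mathfrak{b}/(\mathbf{e}_1\cdot u_{|_{r=R}}+h_0)^2$ appearing in \eqref{couplingSolid}. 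Appending the initial values $(\delta_G,\dot\delta_G)|_{t=0}=(\delta_0,\delta_1)$ completes the Cauchy formulation of the solid equation. Since every step is either a substitution or an invocation of an earlier proposition, there is no genuine obstacle in the proof; the only point requiring a brief argument is the explicit integration giving $q_i(t,r)=-\tfrac{r}{2}\dot\delta_G(t)$, where one must use regularity at the axis $r=0$ to discard the singular homogeneous solution.
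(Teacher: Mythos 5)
Your proposal is correct and mirrors exactly what the paper does: the proposition is a repackaging of the quasilinear form \eqref{quasilinearU} from Section 3, the boundary value $q_i(t,r)=-\tfrac{r}{2}\dot\delta_G(t)$ derived via \eqref{coupboundvalue} in the proof of Proposition \ref{solideq}, and the solid ODE \eqref{eqmotion}, with the coupling terms simply rewritten through $\zeta_e(t,R)=\mathbf{e}_1\cdot u_{|_{r=R}}$. The paper does not even display a separate proof for this proposition, and your step-by-step assembly, including the remark on regularity at $r=0$ to discard the $1/r$ homogeneous solution, is the argument the paper relies on implicitly.
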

 Let us give the notion of compatibility conditions in the case of this particular fluid-structure coupled problem. We recall the equation in \eqref{couplingFluid}
$$\partial_t u=F(u,\partial)u$$
with \begin{equation}F(u,\partial)=A(u)\partial_r + B(u,r)\label{F(u)}\end{equation}
We define formally the traces $u_j:=``\partial^j_t u_{|_{t=0}}"$ as functions of $u_0$ determined inductively by
\begin{equation}
u_0=u_{|_{t=0}} \ \ \ \ \ \ u_{j+1}=\mathcal{F}_{j}(u_0,...u_{j}) 
\label{defujnonlin}
\end{equation}
with \begin{equation*}\mathcal{F}_{j}(u_0,...u_{j})= \sum_{p + |k|\leq j}^{}A_{j,p,k} (u_0) u_{(k)}\partial_r u_p + \sum_{p + |k|\leq j}^{}B_{j,p,k} (u_0,r) u_{(k)}u_p  .\end{equation*} where we use the notation
$$\mbox{for} \ \  \ \  k=(k_1,\ . \ . \ . \ , k_r), \ \ \ \ \ \ \ \ \ \ u_{(k)}=u_{k_1}\ . \ . \ . \  u_{k_r}.$$

\begin{definition}\label{compcondFS}
The data $u_0\in H_r^k((R,+\infty))$, $\delta_0\in\mathbb{R}$ and $\delta_1\in\mathbb{R}$ of the floating structure coupled system \eqref{couplingFluid} - \eqref{couplingSolid} satisfy the compatibility conditions up to order $ k-1$ if, for $0\leq j\leq k-1$, the following holds:
\begin{equation*}
\begin{aligned}
 \mathbf{e}_2 \cdot {u_j}_{|_{r=R}}= -\frac{R}{2}\delta_{j+1},
\end{aligned}
\end{equation*}where $\delta_{j+1}$ are the formal traces $^{``}\dfrac{d^{j+1}}{{dt}^{j+1}}\delta_G (0)^"$ defined from the ODE in \eqref{couplingSolid} as
$$\frac{d^{j-1}}{dt^{j-1}}\left[\dfrac{1}{\left(m+m_a(\delta_G)\right)}\left(-\mathfrak{c}\delta_G
+ \mathfrak{c}\mathbf{e}_1 \cdot u_{|_{r=R}}+
 \left(\mathfrak{b}(u)+\beta(\delta_G)\right)\dot{\delta}_G^2\right)\right](0). $$
\end{definition}
In the following theorem we prove that the coupled model \eqref{couplingFluid} - \eqref{couplingSolid} is locally in time well-posed:
\begin{theorem}
	For $k\geq 2$, let $u_0=(\zeta_{e,0}, q_{e,0}) \in H_r^k((R,+\infty)),$ $ \delta_0$ and $\delta_1$ satisfy the compatibility conditions in Definition \ref{compcondFS} up to order $k-1$. Assume that there exist some constants $h_\mathrm{min}, c_\mathrm{sub}>0$ such that 
	$$\forall r\in(R,+\infty): \qquad h_{e,0}(r)\geq h_\mathrm{min}, \quad \left(gh_{e,0} - \frac{q^2_{e,0}}{h^2_{e,0}}\right)(r)\geq c_\mathrm{sub}, $$with $h_{e,0}=h_0 +\zeta_{e,0},$ and that $$\delta_0 > -h_{w,eq}$$ with the constant $h_{w,eq}$ as in Section 4. Then, the coupled problem \eqref{couplingFluid} - \eqref{couplingSolid} admits a unique solution $(u,\delta_G)\in X^k(T)\times H^{k+1}((0,T))$ with $X^k(T)$ as in \eqref{Xk}.
	\label{maintheorem}
\end{theorem}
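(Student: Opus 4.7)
The plan is to extend the iterative scheme sketched for Theorem \ref{prescribedmotion} to the coupled case by building a sequence $(u^n,\delta_G^n)$ in which each step requires solving a linear hyperbolic mixed problem via Theorem \ref{linearex} and then a nonlinear second order ODE via Cauchy--Lipschitz, and to run the classical ``bound in a high norm, contraction in a low norm'' argument. I would initialize $(u^0,\delta_G^0)$ using the formal traces $u_j$ from \eqref{defujnonlin} and $\delta_{j+1}$ from Definition \ref{compcondFS}, so that time derivatives at $t=0$ match to order $k$ and the compatibility conditions propagate to every iterate. Given $(u^n,\delta_G^n)$, define $u^{n+1}\in X^k(T)$ as the solution of
\[ L(u^n) u^{n+1} = 0, \quad \mathbf{e}_2 \cdot u^{n+1}|_{r=R} = -\tfrac{R}{2}\dot{\delta}_G^n, \quad u^{n+1}(0)=u_0, \]
provided by Theorem \ref{linearex}, then $\delta_G^{n+1}\in H^{k+1}((0,T))$ as the solution of the nonlinear ODE in \eqref{couplingSolid} with the coupling terms frozen at $u^{n+1}|_{r=R}$ and prescribed data $\delta_0,\delta_1$; this is solvable as long as $\delta_G^{n+1} > -h_{w,eq}$ and $h_0 + \mathbf{e}_1\cdot u^{n+1}|_{r=R}>0$.

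The first main step is a uniform bound
\[ N_n(T) := \|u^n\|_{X^k(T)} + \|\delta_G^n\|_{H^{k+1}((0,T))} \leq M \]
together with the non-degeneracy conditions $h_e^n \geq h_\mathrm{min}/2$, $gh_e^n - (q_e^n/h_e^n)^2 \geq c_\mathrm{sub}/2$, and $\delta_G^n + h_{w,eq} \geq (\delta_0 + h_{w,eq})/2$, proved by induction on $n$ for $T$ small enough depending only on the initial data. For $u^{n+1}$, this follows from the energy estimate \eqref{aprioriestimateHs}, which additionally delivers control of the trace $u^{n+1}|_{r=R}$ in $H^k((0,T))$. For $\delta_G^{n+1}$, I would rewrite the ODE as $\ddot{\delta}_G^{n+1} = \Psi(\delta_G^{n+1},\dot{\delta}_G^{n+1},u^{n+1}|_{r=R})$, differentiate $k-1$ times in time, and use the product estimate in $H^k((0,T))$ from Appendix B together with the trace bound just obtained. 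The non-degeneracy conditions are preserved because $|u^{n+1}(t)-u_0|$ and $|\delta_G^{n+1}(t)-\delta_0|$ are both $O(T^{1/2}M)$ by Sobolev embedding.

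Convergence is then proved in the low norm $X^0(T) \times H^1((0,T))$. Setting $\tilde u^n := u^{n+1}-u^n$ and $\tilde\delta^n := \delta_G^{n+1}-\delta_G^n$, the difference $\tilde u^n$ satisfies a linear problem with source $-[A(u^n)-A(u^{n-1})]\partial_r u^n - [B(u^n,r)-B(u^{n-1},r)] u^n$, homogeneous initial data, and boundary value $-\tfrac{R}{2}\dot{\tilde\delta}^{n-1}$; Proposition \ref{prioriestL2} combined with $N_n\leq M$ yields
\[ \|\tilde u^n\|_{X^0(T)}^2 \leq CTe^{CT}\bigl(\|\tilde u^{n-1}\|_{X^0(T)}^2 + \|\tilde\delta^{n-1}\|_{H^1((0,T))}^2\bigr), \]
while subtracting the ODEs and using Lipschitz continuity of the coefficients on the range fixed by $M$ gives an analogous contractive estimate for $\|\tilde\delta^n\|_{H^1((0,T))}$. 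For $T$ small, $(u^n,\delta_G^n)$ is Cauchy in the low norm; by interpolation with the uniform high-norm bound, the limit $(u,\delta_G)$ belongs to $X^k(T)\times H^{k+1}((0,T))$ and solves \eqref{couplingFluid}--\eqref{couplingSolid}. Uniqueness follows from applying the same difference estimate to two candidate solutions.

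The main obstacle is the circular regularity dependence between the two subsystems: the fluid boundary datum $g=-\tfrac{R}{2}\dot{\delta}_G$ must belong to $H^k((0,T))$, forcing $\delta_G \in H^{k+1}((0,T))$, while the ODE produces this regularity only if $u|_{r=R} \in H^k((0,T))$. This is precisely what the \emph{maximal dissipativity} of the boundary condition (Lemma \ref{p1p2}, itself a consequence of the axisymmetric setting) and the trace control built into Proposition \ref{estimateX^k} fix: the $H^k$ trace bound is obtained \emph{for free} together with the interior estimate, closing the coupling. A secondary technical point is to verify that the compatibility conditions of Definition \ref{compcondFS} propagate along the scheme, which follows from the careful matching of time derivatives at $t=0$ built into the initial iterate.
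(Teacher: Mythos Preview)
Your proposal is correct and follows essentially the same route as the paper's proof: an iterative scheme initialized so that the formal time traces match to order $k$, uniform bounds in $X^k(T)\times H^{k+1}((0,T))$ obtained from the linear energy estimate \eqref{aprioriestimateHs} (which simultaneously controls the boundary trace in $H^k$) together with the product estimate of Lemma \ref{prodest} for the ODE, preservation of the non-degeneracy constraints by choosing $T$ small, and contraction in $X^0(T)\times H^1((0,T))$.

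The only structural difference is that at each step you solve the \emph{nonlinear} ODE \eqref{couplingSolid} for $\delta_G^{n+1}$ with the fluid trace frozen, whereas the paper also linearizes the ODE, replacing $\dot\delta_G^2$ by $\dot\delta_G^{n-1}\dot\delta_G^n$ and freezing $m_a,\beta,\mathfrak{b}$ at the previous iterate (see \eqref{couplingSolidscheme}). Both choices work; the paper's linearized ODE is slightly cleaner for the $H^{k+1}$ estimate and makes the difference estimate \eqref{estimatediffdeltan} more transparent, while your version avoids an extra dependence on $\delta_G^{n-1}$ in the ODE step. One minor caution: ``interpolation with the uniform high-norm bound'' is not quite enough by itself to place the limit in $X^k(T)$; the paper invokes the standard Schochet-type argument (weak-$*$ compactness plus recovery of strong continuity in time from the equation) for this last step, and you should do the same.
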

\begin{remark}
	Considering an initial datum $u_0\in H^2_r\left((R,+\infty)\right)$, we need the following compatibility conditions satisfied:
	\begin{equation*}q_e(0,R)= -\frac{R}{2}\delta_1\end{equation*} and 
	\begin{equation*}
	\begin{aligned}&-\partial_r \left(\frac{q^2_e}{h_e}\right)(0,R)-\frac{1}{R}\frac{q^2_e}{h_e}(0,R)-gh_e(0,R)\partial_r\zeta_e(0,R)\\&=
	-\dfrac{R}{2\left(m+m_a(\delta_0)\right)}\left(-\mathfrak{c}\delta_0 + \mathfrak{c}\zeta_e(0,R)+\left(\frac{\mathfrak{b}}{h_e^2(0,R)} +\beta(\delta_0)\right)\delta_1^2\right).\end{aligned}\end{equation*}
	For instance let us take the initial configuration of the fluid-structure interaction as the following: the solid displaced from its equilibrium position with no initial velocity, which means $$\delta_0\neq0 \ \  \mbox{and}\ \ \delta_1=0, $$ and the fluid such that$$ h_e(0,R)=h_0,  \ \ q_e(0,R)=0, \ \ \partial_r \zeta_e(0,R)=-\frac{\mathfrak{c}\delta_0 R}{2\left(m+m_a(\delta_0)\right)gh_0}.$$
	Then, the initial conditions are compatible and we can apply Theorem \ref{maintheorem}.
\end{remark}

\begin{proof} We adapt here the argument that Métivier used in \cite{Met2} for the existence and uniqueness of the solution to the fluid mixed problem and we couple it with the solid equation. Similar techniques are used in \cite{IguLan} by Iguchi and Lannes.\\
	We introduce the following iterative scheme for the coupled system \eqref{couplingFluid} - \eqref{couplingSolid}. 	For $u_0\in H^k_r((R,+\infty))$, let us consider the linear mixed problem
	\begin{equation}
	\begin{cases}
	\partial_t u^{n} +A(u^{n-1})\partial_r u^n +B(u^{n-1},r)u^n=0, \ \ \ \ \mbox{in} \ \ \ (R,+\infty)\\
	\mathbf{e}_2 \cdot u^n_{|_{r=R}}=-\dfrac{R}{2}\dot{\delta}^{n-1}_G(t)\\
	u^n(0)=u_0.
	\end{cases}
	\label{couplingFluidscheme}
	\end{equation}
	and the linear ODE
	\begin{equation}
	\begin{cases}
	(m+m_a(\delta_G^{n-1})) \ddot{\delta}^n_G= -\mathfrak{c} \delta_G^n + \mathfrak{c} \mathbf{e}_1 \cdot u^{n}_{|_{r=R}} +\left(\mathfrak{b}(u^{n-1})+\beta(\delta^{n-1}_G)\right){\dot{\delta}_G}^{n-1}{\dot{\delta}_G}^{n},\\
	\delta^n_G(0)=\delta_0,\\
	\dot{\delta}^n_G(0)=\delta_1,
	\end{cases}
	\label{couplingSolidscheme}
	\end{equation}
	Our goal is to find the solution of the coupled system as the limit of the previous iterative scheme. Hence we need to show the existence and the convergence of the sequence $V^n=(u^n,\delta_G^n)$. 
	We consider the product space $X^k(T) \times H^{k+1}((0,T))$ endowed with the norm 
	$$\|V^n\|_{coup,k}:=\|u^n\|_{X^k(T)} + \|\delta_G^n\|_{H^{k+1}((0,T))}$$
	with the $X^k(T)$ norm defined as in \eqref{Xk}. We denote by $E$ the subspace $$E=\{V=(u, \delta_G)\in X^k(T) \times H^{k+1}((0,T))\,|\, \|V\|_{coup,k}\leq \widetilde{R}\},$$  for some $\widetilde{R}>0$ to determine later,  such that 
	\begin{equation*}
	\forall t\in[0,T), \forall r\in(R,+\infty):\qquad h_e(t,r)\geq C_0,\quad
	\left(gh_e - \frac{q^2_e}{h^2_e}\right)(t,r)\geq c_0\end{equation*}and \begin{equation*} \|\delta_G - \delta_0\|_{L^\infty((0,T))}\leq M_0
	\end{equation*}for some constants $0<C_0\leq h_{\mathrm{min}} $, $0<c_0\leq c_{\mathrm{sub}}$ and $M_0= \frac{\delta_0 + h_{w,eq}}{2}>0$. We choose the first element of the sequence $V^0=(u^0,\delta_G^0)$ with $u^0\in H^{k+\frac{1}{2}}_r(\mathbb{R}\times(R,+\infty))$ such that $$\partial_t^j u^0_{|_{t=0}}=u_j,\ \ \ 0\leq j \leq k$$ with $u_j$ as in \eqref{defujnonlin}.
	We can assume that $u^0$ vanishes for $|t|\geq1$, hence $u^0\in X^{k}(T)$ for all $T.$  There exists a constant $K_0=K_0(u_0, \delta_0, \delta_1)$ depending only on the data such that  
	\begin{equation}
	\|u^0\|_{X^k(T)} + \|\delta_G^0\|_{H^k((0,T))}\leq K_0.
	\label{K0}
	\end{equation} We have that $V^0\in E$ choosing $\widetilde{R}\geq K_0$ .  
	We suppose that $V^{n-1}=(u^{n-1}, \delta_G^{n-1})$ is constructed in $E\subseteq X^{k}(T)\times H^{k+1}((0,T))$ for some $T>0$ with 
	\begin{equation}
	\partial_t^j u^{n-1}_{|_{t=0}}=u_j, \ \ \ j\leq k.
	\label{inductivehyp}
	\end{equation} 
	For $n=1$ this is true. By the definition \eqref{barF} of $\overline{\mathcal{F}}_j$  and by \eqref{inductivehyp}, $$\partial_t^j (F(u^{n-1},\partial)u^n)_{|_{t=0}}=\overline{\mathcal{F}}_j(u_0,...,u_j)$$with $\overline{u}=u^{n-1}.$ Now we consider the linear problem \eqref{couplingFluidscheme}. We compute $u_j^n$ using \eqref{defuj}. We can see that $u_j^n=u_j$ with the $u_j$ defined before. Then, the compatibility conditions $\mathbf{e}_2\cdot {u_j}_{|_{r=R}}=-\frac{R}{2}\delta_{j+1}$ imply that the data $ \dot{\delta}^{n-1}_G$ and $u_0$ are compatible for the linear problem. From Theorem \ref{linearex} the system \eqref{couplingFluidscheme} has a unique solution $u^n\in X^{k}(T)$ and $\partial_t^j u^n_{|_{t=0}}=u_j^n=u_j.$ Moreover,
	$$\|u^n(0)\|_{X^k}=\sum_{j\leq k}\|u_j\|_{H^k_r}\leq K_0.$$
	Therefore we can continue the construction and this permits to define a sequence $u^n\in X^{k}(T)$ solving the linear problem \eqref{couplingFluidscheme} and, from \eqref{aprioriestimateHs}, satisfying 
	\begin{equation}
	\begin{aligned}
	\|{u^n}\|_{X^k(T)} + \normsob{u^n_{|_{r=R}}}{(0,T)} \leq  C(K_0)e^{TC(\|u^{n-1}\|_{X^{k}(T)})}\left(K_0+\normsob{\dot{\delta}_G^{n-1}}{(0,T)}\right).  
	\end{aligned}
	\label{estimateXku_n}
	\end{equation}
	The existence and uniqueness of $\delta_G^n\in W^{2,\infty}((0,T))$ is given by the Cauchy-Lipschitz-Picard theorem since the coefficients in \eqref{couplingSolidscheme} are bounded when $V^{n-1}=(u^{n-1}, \delta_G^{n-1})\in E$. 
	We want to show that $V^n=(u^n, \delta_G^n)\in E$. To do that, we now provide a control of product estimates in Sobolev spaces in time; of course one has the standard estimate $$\| f g\|_{H^k((0,T))}\leq C(T) \| f \|_{H^k((0,T))} \| g\|_{H^k((0,T))}$$ but the constant $C(T)$ blows up as $T\to 0$ which raises some issues since we are led to choose $T$ small enough in the proof. We therefore use the following more precise lemma where the time dependence of the constants is made explicit (see Proposition \ref{proofLemma} in Appendix B for the proof):  
	\begin{lemma}\label{prodest}
		Let $k\geq 1$ be an integer. For $f,g\in H^k((0,T))$ the following holds:
		\begin{equation}
		\begin{aligned}
		\|fg\|_{H^k((0,T))}\lesssim \ \  & \ \ \sqrt{T}\|f\|_{H^k((0,T))}\|g\|_{H^k((0,T))}\\&+ (|f(0)|+|\frac{d}{dt}f(0)|+ ...+ |\frac{d^{k-1}}{dt^{k-1}}f(0)|)\|g\|_{H^k((0,T))}\\&  + (|g(0)|+|\frac{d}{dt}g(0)|+ ...+ |
		\frac{d^{k-1}}{dt^{k-1}}g(0)|)\|f\|_{H^k((0,T))}  
		\end{aligned}
		\end{equation}
	\end{lemma}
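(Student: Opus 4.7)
The plan is to expand $\partial_t^m(fg)$ by the Leibniz rule for each $0 \leq m \leq k$ and bound each summand $\partial_t^j f \cdot \partial_t^{m-j} g$ in $L^2((0,T))$, keeping careful track of the dependence on $T$ so that the naive product estimate (whose constant blows up as $T \to 0$) is replaced by an estimate with $\sqrt{T}$ in front of the bilinear term. This will force the separation of a genuinely bilinear contribution, scaled by $\sqrt{T}$, from lower order contributions in which one factor is replaced by its traces at $t=0$.

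The key ingredient is an elementary trace-plus-derivative $L^\infty$ bound: for $h \in H^1((0,T))$, the fundamental theorem of calculus together with Cauchy--Schwarz yields
\begin{equation*}
\|h\|_{L^\infty((0,T))} \leq |h(0)| + \sqrt{T}\,\|h'\|_{L^2((0,T))}.
\end{equation*}
Applied to $h = \partial_t^j f$ for any $0 \leq j \leq k-1$, this gives
\begin{equation*}
\|\partial_t^j f\|_{L^\infty((0,T))} \leq |\partial_t^j f(0)| + \sqrt{T}\,\|f\|_{H^k((0,T))},
\end{equation*}
and analogously for $g$. This is precisely the estimate in which the $T\to 0$ behavior of $\|\partial_t^j f\|_{L^\infty}$ is made explicit, rather than being hidden inside the (divergent) constant of a Sobolev embedding $H^1\hookrightarrow L^\infty$.

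With this tool, I would estimate each Leibniz term $\|\partial_t^j f \cdot \partial_t^{m-j} g\|_{L^2}$ by placing one factor in $L^\infty$ and the other in $L^2$, choosing as the $L^\infty$ factor one whose order of differentiation is at most $k-1$ (so that the bound above applies). Explicitly: if $j \leq k-1$, put $\partial_t^j f$ in $L^\infty$ to get an estimate of the form $(|\partial_t^j f(0)| + \sqrt{T}\|f\|_{H^k})\|g\|_{H^k}$; otherwise $j = k$, hence necessarily $m = k$, so $\partial_t^{m-j} g = g$, which we put in $L^\infty$ to obtain $(|g(0)| + \sqrt{T}\|g\|_{H^k})\|f\|_{H^k}$. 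In every case the resulting bound is controlled by the right-hand side of the lemma, with the trace terms $|\partial_t^\ell f(0)|$ and $|\partial_t^\ell g(0)|$ having $\ell \leq k-1$ exactly as required. Summing over $j$ and over $m = 0, \ldots, k$ then yields the stated inequality.

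The estimate is essentially soft once the $L^\infty$ lemma is in hand; the only real subtlety is the bookkeeping that ensures that in every Leibniz term at least one factor has derivative order $\leq k-1$, so that no trace of order $\geq k$ (which would not lie in the right-hand side of the lemma) is ever invoked. The boundary cases $m=k$ with $j=0$ or $j=k$ are the ones where this constraint is tight, and they single out why only traces up to order $k-1$ enter the bound.
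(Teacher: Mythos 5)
Your proof is correct and takes essentially the same approach as the paper: it rests on the identical key observation $\|h\|_{L^\infty((0,T))}\leq |h(0)|+\sqrt{T}\,\|h'\|_{L^2((0,T))}$, followed by a Leibniz expansion of $\partial_t^m(fg)$ in which each summand is estimated by placing the factor of derivative order $\leq k-1$ in $L^\infty$ and the other in $L^2$. The only difference is organizational — the paper runs an induction on $k$ and absorbs the $\|fg\|_{H^{k-1}}$ terms via the inductive hypothesis, while you bound every order $m\leq k$ directly — which does not change the substance of the argument.
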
 
	The previous lemma yields the following estimate for the solution $\delta^n_G$ to \eqref{couplingSolidscheme}:
	\begin{proposition}
		The solution $\delta_G^n$ to \eqref{couplingSolidscheme} satisfies
		\begin{equation}
		\begin{aligned}
		&\left\|\delta_G^n\right\|_{H^{k+1}((0,T))}\leq \alpha(T,\widetilde{R}) + \beta(T,\widetilde{R})\|u^n_{|_{r=R}}\|_{H^k((0,T))} 
		\end{aligned}
		\label{estimateDeltan}
		\end{equation}with $\alpha(T,\widetilde{R}),\beta(T, \widetilde{R})\rightarrow 0$ as $T\rightarrow 0.$
	\end{proposition}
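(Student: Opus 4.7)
The plan is to view \eqref{couplingSolidscheme} as a linear second-order ODE for $\delta_G^n$ whose coefficients are smooth functions of $V^{n-1}=(u^{n-1},\delta_G^{n-1})\in E$ and whose forcing is built from $v:=\mathbf{e}_1\cdot u^n_{|_{r=R}}$. Since $V^{n-1}\in E$ guarantees $m+m_a(\delta_G^{n-1})\geq m>0$ together with the boundedness of all other coefficients, the classical Cauchy-Lipschitz theorem yields a unique $C^2$ solution on $[0,T]$. Higher regularity is obtained by bootstrapping: differentiating the ODE $j-1$ times for $j=2,\dots,k+1$ allows one to express $\frac{d^j}{dt^j}\delta_G^n$ as a finite polynomial combination of time-derivatives of $\delta_G^n$, of $V^{n-1}$-dependent coefficients, and of $v$, each of which belongs to the appropriate time-Sobolev space thanks to $V^{n-1}\in X^k(T)\times H^{k+1}((0,T))$ and $v\in H^k((0,T))$.

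To obtain the quantitative bound \eqref{estimateDeltan}, I would take the $L^2((0,T))$ norm of each of these expressions and apply Lemma \ref{prodest} to the resulting products. The lemma produces three kinds of contributions for every product $fg$ of a coefficient $f$ and a factor $g$ extracted from either $\delta_G^n$ or $v$: a genuine bilinear term carrying a $\sqrt{T}$ prefactor, and two ``initial-value'' terms of the form $\bigl(\sum_{i\leq k-1}|f^{(i)}(0)|\bigr)\|g\|_{H^k}$ and $\bigl(\sum_{i\leq k-1}|g^{(i)}(0)|\bigr)\|f\|_{H^k}$. The initial values involved are entirely determined by the data through the compatibility conditions of Definition \ref{compcondFS}. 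The terms involving derivatives of $\delta_G^n$ are absorbed on the left-hand side by a Gronwall argument, made possible by the uniform positive lower bound on $m+m_a(\delta_G^{n-1})$ on $E$. The remaining terms split naturally into those independent of $v$, which contribute to $\alpha(T,\widetilde{R})$, and those proportional to $\|v\|_{H^k((0,T))}=\|u^n_{|_{r=R}}\|_{H^k((0,T))}$, which contribute to $\beta(T,\widetilde{R})$.

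The main technical obstacle is the bookkeeping required to ensure that both $\alpha$ and $\beta$ actually decay as $T\to 0$. The guiding principle is that on the small time interval $(0,T)$ any function bounded uniformly in $T$ has $L^2$ norm of order $\sqrt{T}$, and the one-dimensional Sobolev embedding takes the quantitative form $\|f\|_{L^\infty((0,T))}\lesssim |f(0)|+\sqrt{T}\|f\|_{H^1((0,T))}$, which decomposes every $L^\infty$ bound into a fixed data-dependent constant plus a $\sqrt{T}$-controlled remainder. Combined with the $\sqrt{T}$ prefactor produced by Lemma \ref{prodest} and with the fact that the lowest-order terms undergo double time-integration through the ODE before reappearing in $\delta_G^n$, these mechanisms conspire to equip every contribution with a positive power of $T$. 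The delicate step is verifying this decay for the top-order terms, where the highest derivative of $v$ entering the expression for $\delta_G^{n,(k+1)}$ multiplies a coefficient whose initial values are themselves fixed by the data through the compatibility conditions; once this case is handled, summation over $j\leq k+1$ produces the stated estimate with $\alpha(T,\widetilde{R}),\beta(T,\widetilde{R})\to 0$ as $T\to 0$.
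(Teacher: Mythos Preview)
Your overall strategy matches the paper's: use the ODE for $\ddot\delta_G^n$, control the products with Lemma~\ref{prodest}, and absorb the $\delta_G^n$-dependent terms for $T$ small. Two points where the paper proceeds differently are worth noting. First, rather than differentiating the ODE $j-1$ times and tracking each $\partial_t^j\delta_G^n$ separately, the paper compresses the bookkeeping into a single preliminary reduction
\[
\|\delta_G^n\|_{H^{k+1}((0,T))}\leq C_1(T)+C_2(T)\,\|\ddot\delta_G^n\|_{H^{k-1}((0,T))},\qquad C_1(T),C_2(T)\to 0,
\]
which is exactly your ``double time-integration'' remark made quantitative; Lemma~\ref{prodest} is then applied once to the ODE written as an expression for $\ddot\delta_G^n$, yielding $\|\ddot\delta_G^n\|_{H^{k-1}}\lesssim C(\widetilde R)\|\delta_G^n\|_{H^{k+1}}+C(\widetilde R)\|u^n_{|_{r=R}}\|_{H^k}$ with bounded (not small) constants. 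Second, the absorption step is not a Gronwall argument: the norms are on the fixed interval $(0,T)$, and one simply uses that $C_2(T)$ multiplied by the bounded constant in front of $\|\delta_G^n\|_{H^{k+1}}$ is strictly less than $1$ for $T$ small. This same smallness of $C_2(T)$ is what forces $\alpha(T,\widetilde R),\beta(T,\widetilde R)\to 0$, so the ``delicate step'' you flag at top order is handled automatically once the reduction inequality is in place.
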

	\begin{proof}
		It is immediate to derive
		\begin{equation}
		\left\|\delta_G^n\right\|_{H^{k+1}((0,T))}\leq C_1(T) + C_2(T)\|\ddot{\delta}_G^n\|_{H^{k-1}((0,T))}
		\label{controldelta}
		\end{equation} with $C_1(T), C_2(T) \rightarrow 0$ as $T\rightarrow 0$.
		Using the equation on $\ddot{\delta}^n_G$ we can estimate  $\ddot{\delta}^n_G$ in the following way:
		\begin{equation*}
		\begin{aligned}
		\|\ddot{\delta}^n_G\|_{H^{k-1}}&\lesssim \sqrt{T}\left\|\dfrac{\mathfrak{c}}{m+m_a(\delta_G^{n-1})}
		\right\|_{H^{k-1}}\left(\|\delta_G^n\|_{H^{k-1}}+\|\mathbf{e}_1 \cdot u^n_{|_{r=R}}\|_{H^{k-1}}\right) \\
		&\ \ \ +\sqrt{T}\left\|\dfrac{\mathfrak{b}(u^{n-1})+\beta(\delta_G^{n-1})}{m+m_a(\delta_G^{n-1})}\dot{\delta}_G^{n-1}
		\right\|_{H^{k-1}}\|\dot{\delta}_G^n\|_{H^{k-1}}\\
		& \ \ \ +C_0\left\|\dfrac{\mathfrak{c}}{m+m_a(\delta_G^{n-1})}
		\right\|_{H^{k-1}}+
		D_0\left(\|\delta_G^n\|_{H^{k-1}}+\|\mathbf{e}_1 \cdot u^n_{|_{r=R}}\|_{H^{k-1}} \right)\\
		&  \ \ \ +C_1\left\|\dfrac{\mathfrak{b}(u^{n-1})+\beta(\delta_G^{n-1})}{m+m_a(\delta_G^{n-1})}\dot{\delta}_G^{n-1}
		\right\|_{H^{k-1}}+ D_1 \|\dot{\delta}_G^n\|_{H^{k-1}}
		%
		%
		\end{aligned}
		\end{equation*}with $$C_0=C_0\left(\left|\delta^n_0\right|,..., \left|\delta^n_{k-2}\right|, \left|\mathbf{e}_1 \cdot u^n_{|_{r=R}}(0)\right|,..., \left|\frac{d^{k-2}}{dt^{k-2}}\mathbf{e}_1 \cdot u^n_{|_{r=R}}(0)\right|\right),$$
		$$D_0=D_0\left(\left|\dfrac{\mathfrak{c}}{m+m_a(\delta_G^{n-1})}(0)\right|,...,\left|\frac{d^{k-2}}{dt^{k-2}} \dfrac{\mathfrak{c}}{m+m_a(\delta_G^{n-1})}(0)\right|\right),$$
		$$C_1=C_1\left(\left|\delta^n_1\right|,..., \left|\delta^n_{k-1}\right|\right),$$
		$$D_1=D_1\left(\left|\dfrac{\mathfrak{b}(u^{n-1})+\beta(\delta_G^{n-1})}{m+m_a(\delta_G^{n-1})}\dot{\delta}_G^{n-1}(0)\right|,...,\left|\frac{d^{k-2}}{dt^{k-2}} \dfrac{\mathfrak{b}(u^{n-1})+\beta(\delta_G^{n-1})}{m+m_a(\delta_G^{n-1})}\dot{\delta}_G^{n-1}(0)\right|\right).$$ 
		By applying Lemma \ref{prodest} several times to the products in the previous estimate and using the fact that for $V^{n-1}=(u^{n-1}, \delta_G^{n-1})\in E$ the denominators are bounded from below, we get 
		\begin{equation}
		\begin{aligned}
		\|\ddot{\delta}^n_G\|_{H^{k-1}}\lesssim& \,C(T, \|\delta_G^{n-1}\|_{H^{k+1}}, \|u^{n-1}\|_{H^k})\|\delta_G^n\|_{H^{k+1}}
		+ C(T, \|\delta_G^{n-1}\|_{H^{k+1}})\| u^n_{|_{r=R}}\|_{H^k} 
		\end{aligned}
		\label{estddotdelta}
		\end{equation}	Here the constants may not tend to zero as $T$ goes to zero but they are bounded. Then, using the control for $\|V^{n-1}\|_{coup,k}$, for $T$ small enough we can move the first term in the right hand side of \eqref{estddotdelta} to the left of the inequality \eqref{controldelta} and \eqref{estimateDeltan} follows.
	\end{proof}
	From \eqref{estimateXku_n} and \eqref{estimateDeltan}, we get the following estimate for the coupled norm: 
	\begin{equation*}
	\begin{aligned}
	\|V^n \|_{coup,k}&\leq \|u^n\|_{X^k(T)} + \|\delta_G^n\|_{H^{k+1}((0,T))}\\
	&\leq \|u^n\|_{X^k(T)}+ \alpha(T,\widetilde{R}) + \beta(T,\widetilde{R})\|u^n_{|_{r=R}}\|_{H^k((0,T))} \\
	&\leq C(T,\widetilde{R}, K_0, \|u^{n-1}\|_{X^k(T)}) ( K_0 + \|\delta_G^{n-1}\|_{H^{k+1}((0,T))})+ \alpha(T,\widetilde{R}).
	\end{aligned}
	\end{equation*} 
	Using again the control for $\|V^{n-1}\|_{coup,k}$, we can find some $\widetilde{R}\geq K_0$ such that for $T$ small enough 
	\begin{equation*}
	\|V^n\|_{coup,k}\leq \widetilde{R}.
	\end{equation*} For $u_0=(\zeta_{e,0}, q_{e,0})$  we have
	$$	h_e^{n}(t)=h_{e,0} + \int_{0}^{t}\partial_t h_e^{n}$$ with 
	$$\left|\int_{0}^{t}\partial_t h_e^{n}\right|\leq T\|\partial_t  h_e^{n}\|_{L^\infty((0,T))L^\infty_r((R,+\infty))}\leq T\|u^n\|_{X^k(T)}\leq T\widetilde{R}.$$
	Moreover, 
	$$gh_e^n - \left(\tfrac{q_e^n}{h_e^n}\right)^2  = gh_{e,0} - \left(\tfrac{q_{e,0} }{h_{e,0} }\right)^2 + \int_{0}^{t}\partial_t (gh_e^n - \left(\tfrac{q_e^n}{h_e^n}\right)^2)$$ with 
	\begin{align*}\left| \int_{0}^{t}\partial_t \left(gh_e^n - \left(\tfrac{q_e^n}{h_e^n}\right)^2\right)\right| &\leq T \left\|\partial_t \left(gh_e^n - \left(\tfrac{q_e^n}{h_e^n}\right)^2\right)\right\|_{L^\infty((0,T))L^\infty_r((R,+\infty))}\\&\leq T \|A(u^n)\|_{X^k(T)}\leq TC(1 + \widetilde{R}^k)\end{align*}
	where in the last inequality we have used \eqref{MoserSchochet}.
	Finally,
	$$\|\delta^n_G -\delta_0\|_{L^\infty((0,T))}\leq \sqrt{T}\|\delta_G^n\|_{H^{k+1}((0,T))}\leq \sqrt{T}\widetilde{R}.$$
	Hence, using the assumption on the initial data, the time existence $T$ can be shorten to get 
	\begin{equation*}
	\forall t\in[0,T), \forall r\in(R,+\infty)\qquad h_e^n(t,r)\geq ,\quad
	\left(gh^n_e - \left(\tfrac{q^n_e}{h^n_e}\right)^2\right)(t,r)\geq c_0\end{equation*}and \begin{equation*} \|\delta^n_G - \delta_0\|_{L^\infty((0,T))}\leq M_0.
	\end{equation*}
	Now we look for the convergence of the sequence $V^n$ in a \textquotedblleft smaller\textquotedblright \,norm. We consider the space $$X^0(T)\times H^1((0,T))=C^0([0,T], L^2_r((R,+\infty)))\times H^1((0,T)).$$
	We have that $u^{n}-u^{n-1}$ satisfies 
	\begin{equation*}\begin{cases}
	\partial_t \left(u^n-u^{n-1}\right) +A(u^{n-1})\partial_r \left(u^n - u^{n-1}\right) +B(u^{n-1},r)\left(u^n-u^{n-1}\right)\\\qquad=-\left(A(u^{n-1})-A(u^{n-2})\right)\partial_r u^{n-1} - \left(B(u^{n-1},r)-B(u^{n-2},r)\right)u^{n-1},\\[5pt]
	\mathbf{e}_2 \cdot\left(u^n-u^{n-1}\right)_{|_{r=R}}=-\dfrac{R}{2}\left(\dot{\delta}^{n-1}_G(t)-\dot{\delta}^{n-2}_G(t)\right),\\
	\left(u^n-u^{n-1}\right)(0)=0.
	\end{cases}\end{equation*}Using the embedding $H^k_r \hookrightarrow W^{1,\infty}$ for $k\geq 2$  it yields
	\begin{equation}
	\begin{aligned}
	&\bigg\|\left(A(u^{n-1})-A(u^{n-2})\right)\partial_r u^{n-1} + \left(B(u^{n-1},r)-B(u^{n-2},r)\right)u^{n-1}\bigg\|^2_{X^0}\\&\qquad
	\leq K(\|u^{n-1}\|_{X^k(T)}) \|u^{n-1}-u^{n-2}\|^2_{X^0}.
	\end{aligned}
	\end{equation}Then, by \eqref{aprioriestimate} it follows
	\begin{equation}
	\begin{aligned}
	&\|(u^n - u^{n-1})\|^2_{X^0(T)}+ \|(u^n-u^{n-1})_{|_{r=R}}\|_{L^2((0,T))}^2\leq C(K_0) e^{TC(\|u^{n-1}\|_{X^2(T)})}) \times \\ &\times \left(\|\dot{\delta}^{n-1}_G-\dot{\delta}^{n-2}_G\|_{L^2((0,t))}^2 + K(\|u^{n-1}\|_{X^k(T)})\int_{0}^{T} \|(u^{n-1}-u^{n-2})(\tau)\|^2_{X^0}d\tau\right).
	\end{aligned}
	\label{estimatediffun}
	\end{equation} On the other hand, we have 
	\begin{equation}
	\begin{aligned}
	\|\delta_G^n -\delta_G^{n-1}\|_{H^1((0,T))}^2\leq C_2(T)\|\ddot{\delta}_G^n -\ddot{\delta}_G^{n-1}\|_{L^2((0,T))}^2, 
	\end{aligned}
	\end{equation} with $C_2(T)\rightarrow 0$ as $T\rightarrow 0$.
	Since in the ODE \eqref{couplingSolid} the terms 
	$$\dfrac{\mathfrak{c}}{m+m_a(\delta_G)}, \ \ \ \ \dfrac{\mathfrak{b}(u)}{m+m_a(\delta_G)}, \ \ \ \ \dfrac{\beta(\delta_G)}{m+m_a(\delta_G)}$$
	are Lipschitz continuous on $(u,\delta_G)\in E$ from $L^2$ to $L^2$ and considering the equation for $\delta_G^n$ and $\delta_G^{n-1}$, we obtain the following estimate for $T$ small enough:
	\begin{equation}
	\begin{aligned}
	\|\delta_G^n -\delta_G^{n-1}\|_{H^1((0,T))}&\leq   \widetilde{\alpha}(T,\widetilde{R})\| (u^n-u^{n-1})_{|_{r=R}}\|_{L^2((0,T))}\\& \qquad + \widetilde{\beta}(T,\widetilde{R}) \|\delta_G^{n-1}-\delta_G^{n-2}\|_{L^2((0,T))}
	\end{aligned}
	\label{estimatediffdeltan}
	\end{equation}for some constants $\widetilde{\alpha}(T,\widetilde{R}), \widetilde{\beta}(T,\widetilde{R}).$ Therefore, using \eqref{estimatediffun} and \eqref{estimatediffdeltan}, we get 
	\begin{equation*}
	\begin{aligned}
	&	\|V^n-V^{n-1}\|_{coup,0}=\|u^n-u^{n-1}\|_{X^0(T)} + \|\delta_G^n-\delta_G^{n-1}\|_{H^1((0,T))}\\
	&\leq \|u^n-u^{n-1}\|_{X^0(T)} +\widetilde{\alpha}(T,\widetilde{R})\| (u^n-u^{n-1})_{|_{r=R}}\|_{L^2((0,T))}\\&\qquad +\widetilde{\beta}(T,\widetilde{R}) \|\delta_G^{n-1}-\delta_G^{n-2}\|_{L^2((0,T))}\\
	&\leq C(T,K_0,\widetilde{R})\left(\|\delta_G^{n-1}-\delta_G^{n-2}\|_{H^1((0,T))} +\int_0^T\|(u^{n-1}-u^{n-2})(t)\|_{X^0}dt\right) \\
	&\leq K(T,K_0,\widetilde{R}) \|V^{n-1}-V^{n-2}\|_{coup,0}.
	\end{aligned}
	\end{equation*}where we have used $\|u^{n-1}\|_{X^2(T)}\leq \|u^{n-1}\|_{X^k(T)}$ and the inductive hypothesis \eqref{induction}. Then, we can choose $T$ small enough such that $K(T,K_0,\widetilde{R})<1$ and we obtain that $V^n$ is a convergent sequence in $X^0(T)\times H^1((0,T))$ with limit $V=(u,\delta_G)$. By standard arguments (see \cite{Schochet}) we have that $V\in E\subseteq X^k(T)\times H^{k+1}((0,T))$ is the unique solution of the coupled problem \eqref{couplingFluid} - \eqref{couplingSolid}.
	
\end{proof}
\appendix
\section{Solid with non-flat bottom}
We derive here the equation for the solid motion, as in Section 4, in the more general case of a solid with non-flat bottom. Due to the fact that the interior and exterior domains do not change during the motion, we suppose that the contact between the free surface and the floating structure takes place on the vertical side-walls during all the motion. Then, we can state the following proposition:
\begin{proposition}
	In the case of a solid with non-flat bottom, Newton's law \eqref{newton} can be written under the following form:
	\begin{equation}
	(m+m_a^{NF}(\delta_G)) \ddot{\delta}_G(t)= -\mathfrak{c} \delta_G(t) + \mathfrak{c} \zeta_e (t,R) +\left(\frac{\mathfrak{b}}{h_e^2(t,R)}+\beta^{NF}(\delta_G)\right)\dot{\delta}_G^2(t)
	\label{eqmotionnonflat}
	\end{equation} with $$\mathfrak{c}=\rho g \pi R^2, 	\ \ \ \ \ \ \mathfrak{b}=\dfrac{\pi \rho R^4}{8}, \ \ \ \ \ \  m_a^{NF}(\delta_G)=\dfrac{\rho \pi}{2}\int_{0}^{R}\dfrac{r^3}{h_w(\delta_G,r)}dr,
	$$  $$\beta^{NF}(\delta_G)=\dfrac{\mathfrak{b}}{2 h_w^2(\delta_G,R)}+\dfrac{\pi \rho}{8}\int_{0}^{R}\dfrac{r^4}{h_w^3(\delta_G,r)}\partial_r h_w(\delta_G,r) \ dr,
	$$ and the dependence on $\delta_G$ given by $$h_w(\delta_G,r)=\delta_G(t)+h_{w,eq}(r).$$
\end{proposition}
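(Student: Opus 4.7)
The plan is to mirror the three-part decomposition $F_{\mathrm{fluid}} = F^{\mathrm{I}} + F^{\mathrm{II}} + F^{\mathrm{III}}$ used in the proof of Proposition~\ref{solideq}, while carefully tracking the $r$-dependence of $h_w$ that was trivial in the flat case. Observe first that the derivation of the boundary value $q_i(t,r) = -\tfrac{r}{2}\dot{\delta}_G(t)$ carries over unchanged: the interior mass conservation still reads $\partial_r q_i + q_i/r = -\dot\delta_G$ since $\partial_t h_w = \dot\delta_G$ is independent of $r$, and regularity at the origin fixes the constant of integration.

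For $F^{\mathrm{II}}$, the defining equation for the elementary potential of \cite{Lan}, which in the axisymmetric setting reads $\mathrm{div}_r(h_w\partial_r\Phi_{\mathcal{I}}^r)=-1$, integrates once to give $h_w\partial_r\Phi_{\mathcal{I}}^r = -r/2$ regardless of the profile $h_w(r)$. The same two integrations by parts as in the flat derivation then yield $F^{\mathrm{II}} = -\tfrac{\pi\rho}{2}\ddot\delta_G\int_0^R r^3/h_w(\delta_G,r)\,dr = -m_a^{NF}(\delta_G)\ddot\delta_G$. For $F^{\mathrm{III}}$, the elliptic equation in \eqref{P4} still forces $\partial_r\underline{P}_i^{\mathrm{III}}\equiv 0$, so the pressure equals its boundary value throughout $(0,R)$ and $F^{\mathrm{III}} = \mathfrak{c}\zeta_e(t,R) - \mathfrak{c}\zeta_i(t,R) + \pi R^2 P_{\mathrm{cor}}$.

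The real work lies in $F^{\mathrm{I}}$, where the simplification $gh_w\partial_r h_w\equiv 0$ no longer applies. From \eqref{P1} together with regularity at the origin I would derive $\tfrac{h_w}{\rho}\partial_r\underline{P}_i^{\mathrm{I}} = -\partial_r(q_i^2/h_w) - q_i^2/(rh_w) - gh_w\partial_r h_w$; substituting $q_i = -r\dot\delta_G/2$ and writing $F^{\mathrm{I}} = -\pi\int_0^R r^2\partial_r\underline{P}_i^{\mathrm{I}}\,dr$ splits it into a hydrostatic piece $\pi\rho g \int_0^R r^2\partial_r h_w\,dr$ and a kinetic piece proportional to $\dot\delta_G^2$. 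For the hydrostatic piece, an integration by parts using $\partial_r h_w = \partial_r\zeta_{w,eq}$ combined with the equilibrium relation \eqref{defeq} yields $-mg + \pi\rho g\int_0^R r^2\partial_r h_w\,dr = \mathfrak{c}\zeta_{w,eq}(R)$, which together with the $-\mathfrak{c}\zeta_i(R)$ contribution from $F^{\mathrm{III}}$ and the contact constraint $\zeta_i(R) = \zeta_{w,eq}(R)+\delta_G$ collapses to the restoring terms $-\mathfrak{c}\delta_G + \mathfrak{c}\zeta_e(t,R)$.

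The most delicate step is reorganizing the $\dot\delta_G^2$ coefficient into the claimed form $\mathfrak{b}/h_e^2(t,R) + \beta^{NF}(\delta_G)$. Direct substitution produces a kinetic contribution $\tfrac{\pi\rho\dot\delta_G^2}{4}\int_0^R(3r^3/h_w^2 - r^4\partial_r h_w/h_w^3)\,dr$, which neither has the form of $\beta^{NF}$ nor exhibits the needed evaluation at $h_e$. My plan is to exploit the algebraic identity $2r^3/h_w^2 - r^4\partial_r h_w/h_w^3 = \tfrac{1}{2}\partial_r(r^4/h_w^2)$ to extract an exact boundary contribution $\tfrac{\pi\rho\dot\delta_G^2 R^4}{8h_w^2(R)}$ from the kinetic integral. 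Adding this boundary term to $\pi R^2 P_{\mathrm{cor}}$ and using the explicit choice \eqref{Pcor} with $q_i(R) = -R\dot\delta_G/2$ and jump $\left\llbracket 1/h^2\right\rrbracket = 1/h_e^2(R) - 1/h_w^2(R)$ causes the two $1/h_w^2(R)$ pieces to cancel, leaving precisely $\mathfrak{b}\dot\delta_G^2/h_e^2(t,R)$. The remaining interior part is recast, using the same identity in reverse, into $\tfrac{\pi\rho\dot\delta_G^2}{4}\int_0^R r^3/h_w^2\,dr = \beta^{NF}(\delta_G)\dot\delta_G^2$, yielding \eqref{eqmotionnonflat}.
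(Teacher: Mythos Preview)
Your proof is correct and follows essentially the same route as the paper: the same three-part decomposition of $F_{\mathrm{fluid}}$, the same identification $h_w\partial_r\Phi_{\mathcal{I}}^r=-r/2$ for the added-mass term, the same integration by parts $\pi\rho g\int_0^R r^2\partial_r h_w\,dr = \mathfrak{c}\zeta_i(t,R) - 2\pi\rho g\int_0^R r\zeta_i\,dr$ combined with \eqref{defeq}--\eqref{diffvol} for the hydrostatic terms, and the same constant-pressure solution for $\underline{P}_i^{\mathrm{III}}$. Where the paper simply writes ``integrating by parts, we get \eqref{eqmotionnonflat}'' for the $\dot\delta_G^2$ coefficient, you spell the computation out via the identity $2r^3/h_w^2 - r^4\partial_r h_w/h_w^3 = \tfrac{1}{2}\partial_r(r^4/h_w^2)$, which is precisely the integration by parts the paper has in mind; so your argument is the same one, just made explicit.
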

\begin{remark}
	One can note that in the case of a solid with flat bottom $\left(\partial_r h_w(\delta_G,r)=0\right)$
	$$m_a^{NF}(\delta_G)=m_a(\delta_G), \ \ \ \ \ \ \beta^{NF}(\delta_G)=\beta(\delta_G),$$ and \eqref{eqmotionnonflat} coincides with \eqref{eqmotion}.	 
\end{remark}
\begin{proof}
	We derive only the expression of $F^\mathrm{I}_{\mathrm{fluid}}$ and $F^\mathrm{III}_{\mathrm{fluid}}$ in the case of a solid with a non-flat bottom. The added mass term comes from the fact that $F^\mathrm{II}_{\mathrm{fluid}}$ can be written as
	$$F^\mathrm{II}_{\mathrm{fluid}}=-m_a^{NF}(h_w)\dot{w}_G$$ with 
	$$m_a^{NF}(h_w)=\dfrac{\rho \pi}{2}\int_{0}^{R}\dfrac{r^3}{h_w}dr.$$
	By definition, $$F^\mathrm{I}_{\mathrm{fluid}}=2\pi\rho\int_{0}^{R}\dfrac{r}{2h_w}\left(-\dfrac{h_w}{\rho}\partial_r\underline{P}_i^\mathrm{I}\right)rdr$$ with $\underline{P}_i^\mathrm{I}$ defined as the solution to \eqref{P1}. Since we want $$\underline{P}^\mathrm{I}_i-P_{\mathrm{atm}} \in H^1_{0,r}((0,R))$$ we get 
	$$-\dfrac{h_w}{\rho}\partial_r\underline{P}_i^\mathrm{I}=\partial_r \left(\dfrac{q_i^2}{h_w}\right)+\dfrac{q_i^2}{rh_w}+gh_w\partial_r h_w.$$ Using the formula for the horizontal discharge in the interior domain \linebreak $q_i(t,r)=-\dfrac{r}{2}\dot{\delta}_G(t)$, we obtain that
	$$F^\mathrm{I}_{\mathrm{fluid}}=\pi\rho \int_{0}^{R}\left(\dfrac{3r^3}{4h_w^2}\dot{\delta}_G^2 + (gr^2 -\dfrac{r^4}{4h_w^3}\dot{\delta}_G^2)\partial_r h_w \right)dr.$$
	Also in the case of a solid with non-flat bottom, \eqref{P4} admits the unique constant solution$$\underline{P}_i^\mathrm{III}(t,r)=\rho g (\zeta_e(t,R)-\zeta_i(t,R))+ \frac{\rho}{2}q^2_i(t,R)\left( \frac{1}{h_e^2(t,R)}-\frac{1}{h_w^2(t,R)}\right),$$ with $q_i(t,R)=-\frac{R}{2}\dot{\delta}_G(t).$ By definition of $F^\mathrm{III}_{\mathrm{fluid}}$ we have $$F^\mathrm{III}_{\mathrm{fluid}}=\mathfrak{c}(\zeta_e(t,R)-\zeta_i(t,R)) +\mathfrak{b}\left(\frac{1}{h_e^2(t,R)}-\frac{1}{h_w^2(t,R)}\right)\dot{\delta}^2_G(t).$$ The relations \eqref{defeq} and \eqref{diffvol} still hold but in this case we obtain
	$$-mg=2\pi\rho g \int_{0}^{R}r\zeta_i(t,r)dr - \mathfrak{c}\delta_G(t).$$ 
	Now we observe that the term $\pi \rho \int_{0}^{R}gr^2\partial_r h_w dr$ can be written by integration by parts as 
	$$\pi \rho \int_{0}^{R}gr^2\partial_r h_w dr= \mathfrak{c}\zeta_i(t,R)- 2\pi\rho g  \int_{0}^{R}r\zeta_i(t,r)dr$$ where $\partial_r h_w=\partial_r \zeta_w =\partial_r \zeta_i$ using the constraint \eqref{interiorconstraint}. Putting all these expressions in Newton's law \eqref{newton} and integrating by parts, we get \eqref{eqmotionnonflat}.
\end{proof}

\section{Proof of Lemma \ref{prodest}}
We prove here the product estimate \eqref{productestimate}:
\begin{proposition}	\label{proofLemma}
	Let $k\geq 1$ be an integer. For $f,g\in H^k((0,T))$ the following estimate holds:
	\begin{equation}
	\begin{aligned}
	\|fg\|_{H^k((0,T))}\lesssim \ \  & \ \sqrt{T}\|f\|_{H^k((0,T))}\|g\|_{H^k((0,T))}\\&+ (|f(0)|+|\frac{d}{dt}f(0)|+ ...+ |\frac{d^{k-1}}{dt^{k-1}}f(0)|)\|g\|_{H^k((0,T))}\\&  + (|g(0)|+|\frac{d}{dt}g(0)|+ ...+ |
	\frac{d^{k-1}}{dt^{k-1}}g(0)|)\|f\|_{H^k((0,T))}  
	\end{aligned}
	\label{productestimate}
	\end{equation}
\end{proposition}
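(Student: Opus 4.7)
The plan is to reduce everything to a single, sharp one-dimensional embedding inequality: for any $h\in H^1((0,T))$,
\begin{equation}
\|h\|_{L^\infty((0,T))} \le |h(0)| + \sqrt{T}\,\|h'\|_{L^2((0,T))},
\label{plan:Linf}
\end{equation}
which follows from the fundamental theorem of calculus and Cauchy--Schwarz. The virtue of \eqref{plan:Linf} is that it isolates the $|h(0)|$ contribution from the one carrying the useful $\sqrt{T}$ factor, which is exactly the shape of the right-hand side of \eqref{productestimate}.

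Starting from $\|fg\|_{H^k((0,T))}^2 = \sum_{m=0}^{k}\|(fg)^{(m)}\|_{L^2((0,T))}^2$ I would Leibniz-expand
\[
(fg)^{(m)} = \sum_{j=0}^{m}\binom{m}{j}\,f^{(j)}\,g^{(m-j)},
\]
and bound each term by placing one factor in $L^\infty$ and the other in $L^2$. The rule is: in $f^{(j)}g^{(m-j)}$ I put in $L^\infty$ the factor with the smaller order of differentiation (ties are harmless). For $m\le k-1$, both $j\le k-1$ and $m-j\le k-1$, so both factors lie in $H^1$ and \eqref{plan:Linf} applies to either. In the critical case $m=k$, the endpoint terms $j=0$ and $j=k$ force $f$ or $g$ itself into $L^\infty$, while for $1\le j\le k-1$ either factor is in $H^1$. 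In every case I obtain a bound of the form
\[
\|f^{(j)}g^{(m-j)}\|_{L^2} \le \bigl(|f^{(j)}(0)| + \sqrt{T}\,\|f\|_{H^k}\bigr)\|g\|_{H^k}
\]
or the symmetric one with the roles of $f$ and $g$ exchanged, the choice depending on which of $j,m-j$ lies in $\{0,\dots,k-1\}$.

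Summing over $m=0,\dots,k$ and $j=0,\dots,m$ collapses the various boundary terms $|f^{(j)}(0)|$ and $|g^{(l)}(0)|$ into the sums $\sum_{j=0}^{k-1}|f^{(j)}(0)|$ and $\sum_{l=0}^{k-1}|g^{(l)}(0)|$ appearing in \eqref{productestimate}, while the $\sqrt{T}\|f\|_{H^k}\|g\|_{H^k}$ contribution is produced by the cross terms where \eqref{plan:Linf} has been used on both factors. The combinatorial coefficients $\binom{m}{j}$ with $0\le j\le m\le k$ are absorbed in the implicit constant $\lesssim$.

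The only delicate point is bookkeeping at the endpoint $m=k$, $j=k$ (and symmetrically $j=0$): there one must apply \eqref{plan:Linf} to $g$ (resp.\ $f$) itself, which is allowed because $g\in H^k\subset H^1$ for $k\ge 1$, and this is precisely where the terms $|g(0)|\|f\|_{H^k}$ and $|f(0)|\|g\|_{H^k}$ of \eqref{productestimate} originate. No induction on $k$ is needed; the estimate is obtained in one stroke from Leibniz, \eqref{plan:Linf}, and the triangle inequality. The main (and essentially the only) obstacle is ensuring that at each spot one has the freedom to place the \emph{right} factor in $L^\infty$ so that the initial-value contribution $|\cdot(0)|$ is of order at most $k-1$ in derivatives, which is exactly what the constraint $m\le k$ guarantees.
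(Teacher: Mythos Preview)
Your proof is correct and rests on the same key inequality $\|h\|_{L^\infty((0,T))}\le |h(0)|+\sqrt{T}\,\|h'\|_{L^2((0,T))}$ that the paper uses. The only difference is organizational: the paper argues by induction on $k$ (controlling $\|fg\|_{H^{k-1}}$ via the inductive hypothesis and treating the top-order derivative by Leibniz), whereas you expand all derivatives $0\le m\le k$ at once and sum directly; the two arguments are equivalent in substance.
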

\begin{proof}
	We write $f(t)$ as \begin{equation*}
	f(t)=f(0) + \int_0^t\frac{d}{dt}f(s)ds, 
	\end{equation*}hence
	\begin{equation}
	\|f\|_{L^\infty((0,T))}\leq |f(0)| + \sqrt{T}\left\|f\right\|_{H^1((0,T))}
	\label{inje}
	\end{equation}
	We prove \eqref{productestimate} by induction. For $k=1$ we have 
	\begin{equation*}
	\begin{aligned}
	&\|fg\|_{H^1((0,T))}\lesssim 	\|fg\|_{L^2((0,T))} + 	\|\frac{df}{dt} g\|_{L^2((0,T))} + 	\|f\frac{dg}{dt} \|_{L^2((0,T))}\\&
	\leq (\|f\|_{L^2((0,T))} + 	\|\frac{df}{dt}\|_{L^2((0,T))})	\|g\|_{L^\infty((0,T))} + \|f\|_{L^\infty((0,T))}\|g\|_{H^1((0,T))}\\&
	\leq \sqrt{2}\|f\|_{H^1((0,T))}	\|g\|_{L^\infty((0,T))} + \|f\|_{L^\infty((0,T))}\|g\|_{H^1((0,T))}
	\end{aligned}
	\end{equation*} 	
	and using \eqref{inje} we get 
	\begin{equation}
	\begin{aligned}
	\|fg\|_{H^1((0,T))} \lesssim & \ \sqrt{T}\|f\|_{H^1((0,T))} \|g\|_{H^1((0,T))}\\& + |f(0)|\|g\|_{H^1((0,T))} + 	|g(0)|\|f\|_{H^1((0,T))} 
	\end{aligned}
	\end{equation}
	Let us suppose that \eqref{productestimate} is true for $k-1$. Then, we have
	\begin{equation*}
	\begin{aligned}
	&\|fg\|_{H^k((0,T))}\leq \|fg\|_{H^{k-1}((0,T))} + \|\frac{d^k}{dt^k}(fg)\|_{L^2((0,T))}\\
	&\lesssim \|fg\|_{H^{k-1}((0,T))} + \|\frac{d^k}{dt^k}f\|_{L^2((0,T))}\|g\|_{L^\infty((0,T))} + \|f\|_{L^\infty((0,T))}\|\frac{d^k}{dt^k}g\|_{L^2((0,T))}\\&\  + \|\frac{d}{dt}f\|_{L^2((0,T))}\|\frac{d^{k-1}}{dt^{k-1}}g\|_{L^\infty((0,T))} + \sum_{i=2}^{k-1}C_{k,i} \|\frac{d^i}{dt^{i}}f\|_{L^\infty((0,T))} \|\frac{d^{k-i}}{dt^{k-i}}f\|_{L^2((0,T))}.
	\end{aligned}
	\end{equation*}
	From the estimate \eqref{inje} for $f,g, \dfrac{d^{k-1}}{dt^{k-1}}g$ and $\dfrac{d^i}{dt^{i}}f$ we get
	\begin{equation*}
	\begin{aligned}
	\|fg\|_{H^k((0,T))}\lesssim & \  \|fg\|_{H^{k-1}((0,T))} + 3\sqrt{T} \|f\|_{H^k((0,T))}\|g\|_{H^k((0,T))}\\&
	+ (|f(0)|+|\frac{d^2}{dt^2}f(0)|+ ...+ |\frac{d^{k-1}}{dt^{k-1}}f(0)|)\|g\|_{H^k((0,T))}\\&  + (|g(0)|+|
	\frac{d^{k-1}}{dt^{k-1}}g(0)|)\|f\|_{H^k((0,T))}  
	\end{aligned}
	\end{equation*}
	and \eqref{productestimate} follows using the inductive hypothesis.\end{proof}

\newpage
\bibliographystyle{siam}
\bibliography{bibliography}

\begin{thebibliography}{10}

\bibitem{Alazard}
{\sc T.~Alazard}, {\em Incompressible limit of the nonisentropic {E}uler
  equations with the solid wall boundary conditions}, Adv. Differential
  Equations, 10 (2005), pp.~19--44.

\bibitem{AlBuZu2011}
{\sc T.~Alazard, N.~Burq, and C.~Zuily}, {\em On the water-wave equations with
  surface tension}, Duke Math. J., 158 (2011), pp.~413--499.

\bibitem{AlBuZu2014}
\leavevmode\vrule height 2pt depth -1.6pt width 23pt, {\em On the {C}auchy
  problem for gravity water waves}, Invent. Math., 198 (2014), pp.~71--163.

\bibitem{Benz}
{\sc S.~Benzoni-Gavage and D.~Serre}, {\em Multi-dimensional hyperbolic partial
  differential equations: First-order Systems and Applications}, Oxford
  University Press on Demand, 2007.

\bibitem{NumAddesMass}
{\sc P.~Causin, J.~F. Gerbeau, and F.~Nobile}, {\em Added-mass effect in the
  design of partitioned algorithms for fluid-structure problems}, Comput.
  Methods Appl. Mech. Engrg., 194 (2005), pp.~4506--4527.

\bibitem{CraigSulem}
{\sc W.~Craig and C.~Sulem}, {\em Numerical simulation of gravity waves}, J.
  Comput. Phys., 108 (1993), pp.~73--83.

\bibitem{cummins1962}
{\sc W.~Cummins}, {\em The Impulse Response Function and Ship Motions}, Report
  (David W. Taylor Model Basin), Navy Department, David Taylor Model Basin,
  1962.

\bibitem{dePoyferre}
{\sc T.~de~Poyferr{\'e}}, {\em A priori estimates for water waves with emerging
  bottom}, arXiv preprint arXiv:1612.04103,  (2016).

\bibitem{VortexP}
{\sc O.~{Glass}, A.~{Munnier}, and F.~{Sueur}}, {\em {Dynamics of a point
  vortex as limits of a shrinking solid in an irrotational fluid}}, ArXiv
  e-prints,  (2014).

\bibitem{SueurTaka}
{\sc O.~Glass, F.~Sueur, and T.~Takahashi}, {\em Smoothness of the motion of a
  rigid body immersed in an incompressible perfect fluid}, Ann. Sci. \'Ec.
  Norm. Sup\'er. (4), 45 (2012), pp.~1--51.

\bibitem{Iguchi}
{\sc T.~Iguchi}, {\em A shallow water approximation for water waves}, J. Math.
  Kyoto Univ., 49 (2009), pp.~13--55.

\bibitem{IguLan}
{\sc T.~Iguchi and D.~Lannes}, {\em Hyperbolic free boundary problems and
  applications to wave-structure interactions}, preprint arXiv:1806.07704,
  (2018).

\bibitem{Isozaki}
{\sc H.~Isozaki}, {\em Singular limits for the compressible {E}uler equation in
  an exterior domain. {II}. {B}odies in a uniform flow}, Osaka J. Math., 26
  (1989), pp.~399--410.

\bibitem{John}
{\sc F.~John}, {\em On the motion of floating bodies. {I}}, Comm. Pure Appl.
  Math., 2 (1949), pp.~13--57.

\bibitem{John2}
\leavevmode\vrule height 2pt depth -1.6pt width 23pt, {\em On the motion of
  floating bodies. {II}. {S}imple harmonic motions}, Comm. Pure Appl. Math., 3
  (1950), pp.~45--101.

\bibitem{LanWel}
{\sc D.~Lannes}, {\em Well-posedness of the water-waves equations}, J. Amer.
  Math. Soc., 18 (2005), pp.~605--654.

\bibitem{LanWWP}
\leavevmode\vrule height 2pt depth -1.6pt width 23pt, {\em The water waves
  problem}, vol.~188 of Mathematical Surveys and Monographs, American
  Mathematical Society, Providence, RI, 2013.
\newblock Mathematical analysis and asymptotics.

\bibitem{Lan}
\leavevmode\vrule height 2pt depth -1.6pt width 23pt, {\em On the dynamics of
  floating structures}, Ann. PDE, 3 (2017), pp.~Art. 11, 81.

\bibitem{LanShoreline}
{\sc D.~Lannes and G.~Metivier}, {\em The shoreline problem for the
  one-dimensional shallow water and green-naghdi equations}, arXiv preprint
  arXiv:1710.03651,  (2017).

\bibitem{Ma-Os}
{\sc A.~Majda and S.~Osher}, {\em Initial-boundary value problems for
  hyperbolic equations with uniformly characteristic boundary}, Comm. Pure
  Appl. Math., 28 (1975), pp.~607--675.

\bibitem{Met2}
{\sc G.~M\'{e}tivier}, {\em Stability of multidimensional shocks}, in Advances
  in the theory of shock waves, vol.~47 of Progr. Nonlinear Differential
  Equations Appl., Birkh\"{a}user Boston, Boston, MA, 2001, pp.~25--103.

\bibitem{Met}
{\sc G.~M\'etivier}, {\em Small viscosity and boundary layer methods}, Modeling
  and Simulation in Science, Engineering and Technology, Birkh\"auser Boston,
  Inc., Boston, MA, 2004.
\newblock Theory, stability analysis, and applications.

\bibitem{Ming}
{\sc M.~Ming and C.~Wang}, {\em Elliptic estimates for the
  {D}irichlet-{N}eumann operator on a corner domain}, Asymptot. Anal., 104
  (2017), pp.~103--166.

\bibitem{Rauch}
{\sc J.~B. Rauch and F.~J. Massey, III}, {\em Differentiability of solutions to
  hyperbolic initial-boundary value problems}, Trans. Amer. Math. Soc., 189
  (1974), pp.~303--318.

\bibitem{Schochet}
{\sc S.~Schochet}, {\em The compressible euler equations in a bounded domain:
  existence of solutions and the incompressible limit}, Communications in
  Mathematical Physics, 104 (1986), pp.~49--75.

\bibitem{Shibata}
{\sc Y.~Shibata and M.~Kikuchi}, {\em On the mixed problem for some quasilinear
  hyperbolic system with fully nonlinear boundary condition}, J. Differential
  Equations, 80 (1989), pp.~154--197.

\bibitem{Taylor}
{\sc M.~E. Taylor}, {\em Partial differential equations {III}. {N}onlinear
  equations}, vol.~117 of Applied Mathematical Sciences, Springer, New York,
  second~ed., 2011.

\bibitem{Wu2D}
{\sc S.~Wu}, {\em Well-posedness in {S}obolev spaces of the full water wave
  problem in {$2$}-{D}}, Invent. Math., 130 (1997), pp.~39--72.

\bibitem{Wu3D}
\leavevmode\vrule height 2pt depth -1.6pt width 23pt, {\em Well-posedness in
  {S}obolev spaces of the full water wave problem in 3-{D}}, J. Amer. Math.
  Soc., 12 (1999), pp.~445--495.

\bibitem{Zakharov}
{\sc V.~E. Zakharov}, {\em Stability of periodic waves of finite amplitude on
  the surface of a deep fluid}, Journal of Applied Mechanics and Technical
  Physics, 9 (1968), pp.~190--194.

\end{thebibliography}

\end{document}